\definecolor{webgreen}{rgb}{0,.5,0}
\definecolor{webbrown}{rgb}{.6,0,0}
\definecolor{lightgray}{gray}{0.8}
\newcommand{\seqnum}[1]{\href{https://oeis.org/#1}{\underline{#1}}}
\begin{document}

\newtheorem*{beatty}{Theorem A} 
\newtheorem*{beattystar}{Theorem A$^*$}
\newtheorem*{uspensky}{Theorem B}
\newtheorem*{uspenskystar}{Theorem B$^*$}

\newcommand{\Ba}{B_{\alpha}}
\newcommand{\Bb}{B_{\beta}}
\newcommand{\Bc}{B_{\gamma}}
\newcommand{\Bd}{B_{\delta}}
\newcommand{\Be}{B_{\epsilon}}

\newcommand{\Bat}{\widetilde{B_{\alpha}}}
\newcommand{\Bbt}{\widetilde{B_{\beta}}}
\newcommand{\Bct}{\widetilde{B_{\gamma}}}

\newcommand{\Ebt}{E_{\beta}}
\newcommand{\Ect}{E_{\gamma}}

\newcommand{\fp}[1]{\{#1\}}
\newcommand{\Fp}[1]{\left\{#1\right\}}
\newcommand{\fl}[1]{{\lfloor #1 \rfloor}}
\newcommand{\Fl}[1]{{\left\lfloor #1 \right\rfloor}}
\newcommand{\cl}[1]{{\lceil #1 \rceil}}

\newcommand{\ZZ}{\mathbb{Z}}
\newcommand{\NN}{\mathbb{N}}
\newcommand{\QQ}{\mathbb{Q}}
\newcommand{\RR}{\mathbb{R}}

\renewcommand{\aa}{\alpha}
\newcommand{\bb}{\beta}
\newcommand{\cc}{\gamma}

\newcommand{\at}{\widetilde{a}}
\newcommand{\bt}{\widetilde{b}}
\newcommand{\ct}{\widetilde{c}}

\newcommand{\as}{\alpha^*}
\newcommand{\bs}{\beta^*}
\newcommand{\cs}{\gamma^*}


\theoremstyle{plain}
\newtheorem{theorem}{Theorem}
\newtheorem{corollary}[theorem]{Corollary}
\newtheorem{lemma}[theorem]{Lemma}
\newtheorem{proposition}[theorem]{Proposition}

\theoremstyle{definition}
\newtheorem{definition}[theorem]{Definition}
\newtheorem{example}[theorem]{Example}
\newtheorem{conjecture}[theorem]{Conjecture}

\theoremstyle{remark}
\newtheorem{remark}[theorem]{Remark}

\newcommand{\seq}[1]{(#1)_{n\in\NN}}

\begin{center}
\vskip 1cm{\LARGE\bf Almost Beatty Partitions}
\vskip 1cm 
\large
A.~J.~Hildebrand and Xiaomin Li\\
Department of Mathematics\\
University of Illinois\\
1409 W.\ Green St.\\
Urbana, IL 61801\\
USA\\
\href{mailto:ajh@illinois.edu}{\tt ajh@illinois.edu}
\\
\href{mailto:xiaomin3@illinois.edu}{\tt xiaomin3@illinois.edu}
\\
\ 
\\
Junxian Li\\
Mathematisches Institut\\
Bunsenstra\ss e 3--5\\
D-37073 G\"ottingen\\
Germany\\
\href{mailto:Junxian.Li@mathematik.uni-goettingen.de}{\tt 
Junxian.Li@mathematik.uni-goettingen.de} \\
\ \\
Yun Xie\\
Department of Statistics\\
University of Washington\\
Box 354322\\
Seattle, WA 98195\\
USA\\
\href{mailto:yunxie@uw.edu}{yunxie@uw.edu}
\end{center}

\begin{abstract}
Given $0<\alpha<1$, the Beatty sequence of density $\alpha$
is the  sequence $B_{\alpha}=(\lfloor n/\alpha\rfloor)_{n\in\NN}$.
Beatty's theorem  states that if $\alpha,\beta$ are irrational
numbers with $\alpha+\beta=1$, then the Beatty sequences
$B_{\alpha}$ and $B_{\beta}$ partition the positive integers; that
is, each positive integer belongs to exactly one of these two sequences.
On the other hand, Uspensky showed that this result breaks down completely
for partitions into three (or more) sequences: There does not exist a single
triple $(\alpha,\beta,\gamma)$ such that the Beatty sequences
$B_\alpha,B_\beta,B_\gamma$ partition the positive integers. 

In this paper we consider the question of how close we can come to a
three-part Beatty partition by considering ``almost'' Beatty sequences,
that is, sequences that represent small perturbations of an ``exact''
Beatty sequence.  We first characterize all cases in which there exists a
partition into two exact Beatty sequences and one almost Beatty sequence
with given densities, and we determine the approximation error involved. We
then give two general constructions that yield partitions into one
exact Beatty sequence and two almost Beatty sequences with prescribed
densities, and we determine the approximation error in these constructions.
Finally, we show that in many situations these constructions are
best-possible in the sense that they yield the closest approximation to a
three-part Beatty partition.

\end{abstract}

\section{Introduction}
\label{sec:introduction}

A \emph{Beatty sequence} is a sequence of the form
$\Ba=\seq{\fl{n/\alpha}}$, where
$0<\alpha<1$ is a real number\footnote{We 
are using the notation $\fl{n/\alpha}$ 
instead of the more common notation $\fl{\alpha n}$ for the $n$th term of
a Beatty sequence.  This convention has the advantage that 
the parameter $\alpha$ has a
natural interpretation as the density of the sequence $\Ba$, and it
simplifies the statements and proofs of our results.} 
and the bracket notation denotes
the floor (or greatest integer) function.  Beatty sequences arise in a
variety of areas, from diophantine approximation and dynamical systems to
theoretical computer science and the theory of quasicrystals.  They have a
number of remarkable properties, the most famous of which is the following
theorem.  Here, and in the sequel, $\NN=\{1,2,3,\dots\}$ denotes the set of
positive integers, and by a \emph{partition} of $\NN$ we mean a collection of
subsets of $\NN$ such that every element of $\NN$ belongs to exactly one of
these subsets.

\begin{beatty}[Beatty's theorem, \cite{beatty}]
Two Beatty sequences $\Ba$ and $\Bb$ partition $\NN$ if and
only if $\alpha$ and $\beta$ are positive irrational 
numbers with $\alpha+\beta=1$.
\end{beatty}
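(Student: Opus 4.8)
The plan is to reduce the whole statement to counting. For a strictly increasing sequence $S$ of positive integers and $N\in\NN$, let $S(N)$ denote the number of terms of $S$ that are at most $N$. The key elementary observation is that two such sequences $S_1,S_2$ partition $\NN$ if and only if $S_1(N)+S_2(N)=N$ for every $N\in\NN$: if they partition $\NN$ then $\{1,\dots,N\}$ splits into the terms of $S_1$ and of $S_2$ lying in it, and conversely, if $S_1(N)+S_2(N)=N$ for all $N$ then the combined counting function increases by exactly $1$ at each integer, and since each $S_i$ is strictly increasing this forces each integer to be hit by exactly one of the two sequences exactly once. So I first record that, for $0<\aa<1$, the Beatty sequence $\Ba=\seq{\fl{n/\aa}}$ is strictly increasing with positive integer terms, since $\fl{(n+1)/\aa}\ge\fl{n/\aa}+\fl{1/\aa}$ and $\fl{1/\aa}\ge1$. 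Then I compute $\Ba(N)$ exactly: $\fl{n/\aa}\le N\iff n/\aa<N+1\iff n<\aa(N+1)$, so
\[
\Ba(N)=\cl{\aa(N+1)}-1,
\]
which equals $\fl{\aa(N+1)}$ precisely when $\aa(N+1)\notin\ZZ$.

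For the ``if'' direction, assume $\aa,\bb$ are positive irrationals with $\aa+\bb=1$. Then $\aa(N+1),\bb(N+1)\notin\ZZ$, so $\Ba(N)+\Bb(N)=\fl{\aa(N+1)}+\fl{\bb(N+1)}$. Since $\aa(N+1)+\bb(N+1)=N+1\in\ZZ$ while neither summand is an integer, their fractional parts sum to $1$, and hence $\fl{\aa(N+1)}+\fl{\bb(N+1)}=(N+1)-1=N$. Combined with the strict monotonicity noted above, the counting criterion gives that $\Ba$ and $\Bb$ partition $\NN$.

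For the ``only if'' direction, suppose $\Ba$ and $\Bb$ partition $\NN$; note $0<\aa,\bb<1$ by the definition of a Beatty sequence, and the criterion gives $\Ba(N)+\Bb(N)=N$ for all $N$. From $\Ba(N)=\aa(N+1)+O(1)$ and $\Bb(N)=\bb(N+1)+O(1)$ we get $(\aa+\bb)(N+1)+O(1)=N$; dividing by $N$ and letting $N\to\infty$ yields $\aa+\bb=1$. It remains to rule out $\aa$ rational (which would force $\bb=1-\aa$ rational as well). If $\aa=p/q$ in lowest terms with $q\ge2$, take $N=q-1$: then $\aa(N+1)=p$ and $\bb(N+1)=q-p$ are both integers, so $\Ba(N)+\Bb(N)=(p-1)+(q-p-1)=q-2\ne q-1=N$, contradicting the counting criterion. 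Hence $\aa$, and likewise $\bb=1-\aa$, must be irrational.

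I expect the only point requiring care to be the translation between the counting identity $S_1(N)+S_2(N)=N$ and the set-theoretic partition statement: one genuinely needs strict monotonicity of each Beatty sequence so that ``counted with multiplicity'' and ``counted as a set'' coincide. Everything else is a two-line manipulation of floors, ceilings, and fractional parts.
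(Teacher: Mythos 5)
Your proof is correct. Note that the paper itself does not prove Theorem A at all: it is quoted as a classical result with references (Beatty, Rayleigh, Skolem, etc.), so there is no in-paper argument to compare against. That said, your counting-function route is very much in the spirit of the machinery the paper does develop for its own results: your formula $\Ba(N)=\cl{\alpha(N+1)}-1$ is the general-$\alpha$ version of the paper's Lemma \ref{lem:beatty-properties}(ii), which states $\Ba(m)=\fl{\alpha(m+1)}$ for irrational $\alpha$, and your reduction of the partition property to the identity $\Ba(N)+\Bb(N)=N$ is exactly the kind of counting identity the paper exploits in Lemma \ref{lem:Ba-Bb-Bc-counting-function}. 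The two points that genuinely need care are handled: strict monotonicity of $\seq{\fl{n/\alpha}}$ (so that counting terms and counting set elements agree), and the ceiling-versus-floor distinction in the converse direction, where your choice $N=q-1$ for rational $\alpha=p/q$ cleanly produces the defect $q-2\ne q-1$. One could quibble that in the ``only if'' direction you should note $q\ge 2$ follows from $0<\alpha<1$, but you do say this, so the argument is complete.
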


This result was posed in 1926 by Samuel Beatty as a problem in the
\emph{American Mathematical Monthly} \cite{beatty}, though it had appeared
some 30 years earlier in a book by Rayleigh on the theory of sound
\cite[p.~123]{rayleigh-book}. The result has been rediscovered
multiple times since (e.g., \cite{bang1957, skolem1957}),
and it also appeared as a problem 
in the 1959 Putnam Competition \cite{putnam1959}.  The theorem has been
variably referred to as Beatty's theorem \cite{holshouser-reiter},
Rayleigh's theorem \cite{schoenberg-book}, and the Rayleigh-Beatty theorem
\cite{obryant2002}.  For more about Beatty's theorem, we refer to the paper
by Stolarsky \cite{stolarsky1976} and the references cited therein.  Recent
papers on the topic include Ginosar and Yona \cite{ginosar-yona2012} and
Kimberling and Stolarsky \cite{kimberling-stolarsky2016}.

A notable feature of Beatty's theorem is its generality: the only 
assumptions needed to ensure that two Beatty sequences form a partition of
$\NN$ are (1) the obvious requirement that the two densities add up to
$1$, and (2) the (slightly less obvious, but not hard to verify) condition
that the numbers $\alpha$ and $\beta$ be irrational.

As effective as Beatty's theorem is in producing partitions of $\NN$ 
into \emph{two} Beatty sequences, the result breaks down completely for
partitions into \emph{three} (or more) such sequences:

\begin{uspensky}[Uspensky's theorem, \cite{uspensky1927}]
There exists no partition of $\NN$ into three or more Beatty sequences.
\end{uspensky}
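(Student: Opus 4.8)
I would argue by contradiction. Suppose some Beatty sequences $B_{\alpha_1},\dots,B_{\alpha_k}$ with $k\ge 3$ partition $\NN$. The plan is to show that such a partition forces a rigid identity for the fractional parts $\fp{\alpha_i y}$, and then to contradict that identity by a short averaging argument. The first step is pure counting: for each $i$ and each $N\ge 0$, the number of terms of $B_{\alpha_i}=\seq{\fl{n/\alpha_i}}$ lying in $\{1,\dots,N\}$ equals $\cl{\alpha_i(N+1)}-1$, since $\fl{n/\alpha_i}\le N$ is equivalent to $n<\alpha_i(N+1)$. Because the partition property forces these $k$ counts to sum to $N$, I obtain $\sum_{i=1}^k\bigl(\cl{\alpha_i(N+1)}-1\bigr)=N$ for every integer $N\ge 0$; dividing by $N$ and letting $N\to\infty$ gives $\sum_{i=1}^k\alpha_i=1$.

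Next I would upgrade this to the key identity. Writing $y=N+1$ and using $\sum_i\alpha_i y=y$, the relation above becomes
\[
\sum_{i=1}^k\bigl(\cl{\alpha_i y}-\alpha_i y\bigr)=k-1\qquad\text{for every integer }y\ge 1.
\]
Each summand lies in $[0,1)$, so if one of them were $0$ the remaining $k-1$ summands could not sum to $k-1$; hence $\cl{\alpha_i y}-\alpha_i y>0$, i.e.\ $\alpha_i y\notin\ZZ$, for all $i$ and all $y\ge 1$. Taking $y$ to be a denominator of $\alpha_i$ then rules out $\alpha_i\in\QQ$, so every $\alpha_i$ is irrational; and now $\cl{\alpha_i y}-\alpha_i y=1-\fp{\alpha_i y}$, so the displayed identity rearranges into
\[
\sum_{i=1}^k\fp{\alpha_i y}=1\qquad\text{for every integer }y\ge 1.
\]

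The last step is to average this over $y=1,\dots,N$: interchanging the two finite sums and dividing by $N$ gives $\sum_{i=1}^k\bigl(\tfrac1N\sum_{y=1}^N\fp{\alpha_i y}\bigr)=1$ for every $N$. For each fixed $i$, irrationality of $\alpha_i$ and Weyl's equidistribution theorem (equivalently, the classical estimate $\sum_{y=1}^N\fp{\theta y}=\tfrac12 N+o(N)$ for irrational $\theta$) give $\tfrac1N\sum_{y=1}^N\fp{\alpha_i y}\to\int_0^1 t\,dt=\tfrac12$. Hence the left-hand side tends to $k/2$, so $k/2=1$, i.e.\ $k=2$, contradicting $k\ge 3$.

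I expect the only delicate points to be the floor/ceiling bookkeeping in the counting step and the short deduction of irrationality; once the identity $\sum_i\fp{\alpha_i y}=1$ is in place the averaging argument is immediate, and it is exactly the presence of $k\ge 3$ summands---each equidistributed with mean $\tfrac12$ while their total is pinned to $1$---that produces the contradiction. (Consistently, nothing goes wrong when $k=2$, since $\tfrac12+\tfrac12=1$, so Beatty's theorem is not contradicted.) A partition into infinitely many Beatty sequences is ruled out the same way, since the counting step still produces the identity $\sum_i\fp{\alpha_i y}=1$ (now an absolutely convergent sum over the parts), whence $\fp{\alpha_{i_1}y}+\fp{\alpha_{i_2}y}+\fp{\alpha_{i_3}y}\le 1$ for any three parts and averaging over $y$ gives the impossible $\tfrac32\le 1$.
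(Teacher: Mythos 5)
The paper itself does not prove Theorem B: it is quoted from Uspensky's 1927 paper, with Skolem's and Graham's proofs cited, so there is no internal proof to compare yours against. Judged on its own terms, your argument for a partition into finitely many ($k\ge 3$) Beatty sequences is correct. The count of elements of $B_{\alpha_i}$ in $\{1,\dots,N\}$ is indeed $\cl{\alpha_i(N+1)}-1$ (the ceiling analogue of Lemma \ref{lem:beatty-properties}(ii), valid also for rational densities), the partition property plus $\sum_i\alpha_i=1$ gives $\sum_{i=1}^k\bigl(\cl{\alpha_i y}-\alpha_i y\bigr)=k-1$ for every integer $y\ge1$, your pigeonhole step correctly forces $\alpha_i y\notin\ZZ$ for all $i$ and $y$ (hence all $\alpha_i$ irrational), and the resulting identity $\sum_{i=1}^k\fp{\alpha_i y}=1$ combined with Weyl equidistribution (Lemma \ref{lem:weyl}) yields $k/2=1$, contradicting $k\ge3$. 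This is a clean, self-contained proof, different in flavor from the combinatorial arguments in the cited literature.

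The one soft spot is the closing sentence about partitions into infinitely many Beatty sequences. There the identity you invoke is not quite what the counting argument yields: one must first establish $\sum_i\alpha_i=1$ (this needs a short separate argument in the infinite setting), and even then one only gets $\sum_i\fp{\alpha_i y}=1-\#\{i:\alpha_i y\in\ZZ\}\le 1$, because the pigeonhole step that ruled out integer values of $\alpha_i y$ (and hence rational densities) does not transfer --- the sum $\sum_i\bigl(\cl{\alpha_i y}-\alpha_i y\bigr)$ now has infinitely many terms and diverges. Consequently your final averaging needs the three selected parts to have irrational densities; a rational density $p/q$ only contributes a limiting average $(q-1)/(2q)<1/2$, so ``$3/2\le1$'' is not automatic. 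The patch is easy: a Beatty sequence of rational density $p/q$ contains every positive multiple of $q$, so two parts of rational density would intersect; hence at most one part has rational density, three irrational parts can be chosen, and your inequality $\fp{\alpha_{i_1}y}+\fp{\alpha_{i_2}y}+\fp{\alpha_{i_3}y}\le 1$ plus averaging gives the desired contradiction. With that small repair the infinite case goes through as well.
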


This result was first proved in 1927 by Uspensky \cite{uspensky1927}.
Other proofs have been given by Skolem \cite{skolem1957} and 
Graham \cite{graham1963}.  As with Beatty's theorem, Uspensky's theorem 
eventually made its way into the Putnam, appearing as Problem
B6 in the 1995 William Lowell Putnam Competition.\footnote{Interestingly, 
neither the ``official'' solution \cite{putnam1995}
published in the \emph{Monthly}, nor the two solutions presented in the
compilation \cite{kedlaya-putnam}, while mentioning the connection to
Beatty's theorem,  made any reference to Uspensky's theorem and its  
multiple proofs in the literature. Thus, the Putnam problem may well
represent yet another rediscovery of this result.}

The theorems of Beatty and Uspensky can be stated in the following 
equivalent fashion, which highlights the complete breakdown of the partition
property when more than two Beatty sequences are involved: 

\begin{beattystar}[Complement version of Beatty's theorem]
The complement of a Beatty sequence with irrational density
is \textbf{always} another Beatty sequence.
\end{beattystar}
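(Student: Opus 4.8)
The plan is to obtain Theorem~A$^*$ as an immediate consequence of Beatty's theorem (Theorem~A). Let $\Ba$ be a Beatty sequence with irrational density $\alpha$; under the convention adopted here this means $0<\alpha<1$ and $\alpha\notin\QQ$. Set $\beta=1-\alpha$. The first step is to check that $\beta$ is again an admissible density: clearly $0<\beta<1$, and $\beta$ is irrational, since otherwise $\alpha=1-\beta$ would be rational. Thus $\alpha$ and $\beta$ are positive irrationals with $\alpha+\beta=1$, so the ``if'' direction of Beatty's theorem applies to the pair $(\alpha,\beta)$.

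The second step is purely formal. Beatty's theorem tells us that $\Ba$ and $\Bb$ partition $\NN$, i.e., every positive integer belongs to exactly one of them. Unwinding the meaning of ``partition,'' this says precisely that $\NN\setminus\Ba=\Bb$. Hence the complement of $\Ba$ is the Beatty sequence $\Bb$ of density $1-\alpha$, which is exactly the assertion of Theorem~A$^*$. In fact the argument identifies the complement uniquely: it is not merely \emph{some} Beatty sequence, but the specific one of density $1-\alpha$.

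There is no genuine obstacle here; Theorem~A$^*$ is simply a restatement of one half of Beatty's theorem, and the only care required is the elementary bookkeeping that $1-\alpha$ is again irrational and again lies in $(0,1)$, so that it is a legitimate density in the sense of this paper. (If one wished to prove Theorem~A$^*$ without invoking Theorem~A, one could instead compare the counting functions $n\mapsto\#\{k\le n:k\in\Ba\}$ and $n\mapsto\#\{k\le n:k\in\Bb\}$, showing that the two sequences are disjoint and that their counting functions sum to $n$ for every $n$, using the fact that $\{m\alpha\}\ne 0$ for all $m$; but this merely reproves Beatty's theorem and is unnecessary given that Theorem~A is available.)
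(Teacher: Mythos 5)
Your argument is correct and is exactly how the paper treats Theorem~A$^*$: the paper offers no separate proof, presenting it as an equivalent restatement of Beatty's theorem, and your deduction (set $\beta=1-\alpha$, check $\beta\in(0,1)$ is irrational, apply the ``if'' direction of Theorem~A, and read off $\NN\setminus\Ba=\Bb$) is precisely the implicit reasoning. Your additional observation that the complement is the specific Beatty sequence of density $1-\alpha$ is a nice touch and fully consistent with the paper's intent.
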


\begin{uspenskystar}[Complement version of Uspensky's theorem]
The complement of a union of two or more pairwise disjoint
Beatty sequences is \textbf{never} a Beatty sequence. 
\end{uspenskystar}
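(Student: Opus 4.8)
The plan is to obtain Theorem~B$^*$ as a direct logical consequence of Theorem~B (Uspensky's theorem), which we may assume; the two assertions are in fact contrapositive reformulations of one another, so that all of the real content resides in Theorem~B itself.

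First I would argue the forward direction. Suppose, toward a contradiction, that for some integer $k\ge 2$ there exist pairwise disjoint Beatty sequences $B_{\alpha_1},\dots,B_{\alpha_k}$ whose union $S=\bigcup_{i=1}^{k}B_{\alpha_i}$ has complement $\NN\setminus S$ equal to a Beatty sequence $B_{\beta}$. By the very definition of the complement, $B_{\beta}$ is disjoint from each $B_{\alpha_i}$; the $B_{\alpha_i}$ are pairwise disjoint by hypothesis; and every positive integer lies either in $S$ or in $B_{\beta}=\NN\setminus S$. Hence each element of $\NN$ belongs to exactly one of the $k+1$ Beatty sequences $B_{\alpha_1},\dots,B_{\alpha_k},B_{\beta}$, so these $k+1\ge 3$ Beatty sequences partition $\NN$, contradicting Theorem~B. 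This proves Theorem~B$^*$. For the converse, which confirms the claimed equivalence: if $m\ge 3$ Beatty sequences $B_{\gamma_1},\dots,B_{\gamma_m}$ partitioned $\NN$, then $B_{\gamma_1},\dots,B_{\gamma_{m-1}}$ would be $m-1\ge 2$ pairwise disjoint Beatty sequences the complement of whose union is the Beatty sequence $B_{\gamma_m}$, contradicting Theorem~B$^*$.

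There is essentially no obstacle in this argument: the only point requiring any care is the verification that adjoining the complementary sequence $B_{\beta}$ to the original family produces a genuine partition --- i.e., that the enlarged family is both pairwise disjoint and covers $\NN$ --- and both properties are immediate from the meaning of ``complement.'' If a self-contained treatment were wanted instead, one would prove Theorem~B directly: a partition of $\NN$ into $B_{\alpha_1},\dots,B_{\alpha_k}$ forces the generating-function identity $\sum_{i=1}^{k}\sum_{n\ge 1}x^{\fl{n/\alpha_i}}=\sum_{m\ge1}x^{m}$ for $|x|<1$ (each value $\fl{n/\alpha_i}$ being attained exactly once, since $1/\alpha_i>1$), and for $k\ge3$ one extracts a contradiction from the resulting constraints on the densities $\alpha_i$ and on the distribution of the fractional parts $\fp{n/\alpha_i}$, along the lines of the proofs of Uspensky, Skolem, and Graham cited above. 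The reduction to Theorem~B, however, is by far the cleanest route, and it is the one I would present.
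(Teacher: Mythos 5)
Your argument is correct and is exactly the paper's (implicit) treatment: the paper offers no separate proof of Theorem B$^*$, presenting it merely as an equivalent restatement of Uspensky's theorem, and the equivalence is precisely your contrapositive observation that a Beatty-sequence complement of $k\ge 2$ pairwise disjoint Beatty sequences would yield a partition of $\NN$ into $k+1\ge 3$ Beatty sequences, contradicting Theorem B.
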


In light of the results of Beatty and Uspensky on the existence (resp.,
non-existence) of partitions into two (resp., three) Beatty sequences it is
natural to ask how close we can come to a partition of $\NN$ into three
Beatty sequences.  Specifically, given irrational densities
$\alpha,\beta,\gamma\in(0,1)$ that sum to $1$,
can we obtain a proper partition of $\NN$ by
slightly ``perturbing'' one or more of the Beatty sequences $\Ba,\Bb,\Bc$?
If so, how many of the sequences $\Ba,\Bb,\Bc$ do we need to perturb 
and what is the minimal amount of perturbation needed?

In this paper we seek to answer such questions by constructing partitions
into ``almost'' Beatty sequences that approximate, in an appropriate sense,
``exact'' Beatty sequences.  We call such partitions ``almost Beatty
partitions.'' 

Since, by Uspensky's theorem, partitions into three Beatty sequences do not
exist, the best we can hope for is partitions into two exact Beatty
sequences, $\Ba$ and $\Bb$, and an almost Beatty sequence
$\Bct$. In Theorem \ref{thm:thm1}  we characterize 
all triples $(\alpha,\beta,\gamma)$ of irrational numbers for which such a
partition exists, and we determine the precise ``distance'' (in the sense
of \eqref{eq:def-a-norm} below) between the almost Beatty sequence
$\Bct$ and the corresponding exact Beatty sequence $\Bc$ in such a
partition.

In Theorems \ref{thm:thm2}, \ref{thm:thm3}, and \ref{thm:thm4}
we consider partitions into one exact Beatty
sequence, $\Ba$,  and two almost Beatty sequences, $\Bbt$ and $\Bct$, with
given densities $\alpha$, $\beta$, and $\gamma$. In
Theorem \ref{thm:thm2} we give a general construction of such partitions based on
iterating the two-part Beatty partition process, and we determine the approximation
errors involved.
In Theorem \ref{thm:thm3} we present a different construction 
that requires a condition  
on the relative size of the densities $\alpha,\beta,\gamma$, but leads to a
better approximation.
In Theorem \ref{thm:thm4} we consider the special case when two of the three densities
$\alpha,\beta,\gamma$ are equal, say  $\alpha =\beta$.
We show that in this case we can always obtain an almost Beatty partition
from the Beatty sequences $\Ba$, $\Bb(=\Ba)$, and $\Bc$, by shifting \emph{all}
elements of $\Bb$ and \emph{selected} elements of $\Bc$ down by
exactly $1$.  

As special cases of the above results we recover, or improve on, some 
particular three-part almost Beatty partitions of $\NN$ that have been
mentioned in the literature; see Examples \ref{ex:thm1}, \ref{ex:thm3}, and
\ref{ex:thm4}.  We emphasize that, while these particular partitions
involve sequences with densities related to the golden ratio or other
``special'' irrational numbers, our results show that almost Beatty
partitions of similar quality exist for any triple of irrational densities
that sum up to $1$, with the quality of the approximation tied mainly to
the size of these densities, and not their arithmetic nature.

Our final result, Theorem \ref{thm:thm5}, 
is a non-existence result showing that the
almost Beatty partitions obtained through the construction of Theorem
\ref{thm:thm3} represent, in many situations,
the closest approximation to a partition
into three Beatty sequences that can be obtained by \emph{any} method. 
Specifically, Theorem \ref{thm:thm5} shows that, for ``generic'' densities
$\alpha,\beta,\gamma$ with $\alpha>1/3$,  
there does not exist an almost Beatty
partition $\NN=\Ba\cup\Bbt\cup\Bct$ in which the elements of the almost
Beatty sequences $\Bbt$ and $\Bct$ differ from the corresponding elements
of the exact Beatty sequences $\Bb$ and $\Bc$ by at most $1$.

The remainder of this paper is organized as follows: In Section
\ref{sec:results} we state our main results, Theorems
\ref{thm:thm1}--\ref{thm:thm5}, along with some examples illustrating these
results.  In Section \ref{sec:lemmas} we gather some auxiliary results,
while Sections \ref{sec:proof-thm2}--\ref{sec:proof-thm5}
contain the proofs of our main results.  We conclude in Section
\ref{sec:concluding-remarks} with some remarks on related questions, 
extensions and generalizations of our results, and directions for future
research.

\section{Statement of Results}
\label{sec:results}

\subsection{Notation and conventions}

Given a real number $x$, we let $\fl{x}$ denote its floor, defined as the
largest integer $n$ such that $n\le x$, and $\fp{x}=x-\fl{x}$ its
fractional part.

We let $\NN$ denote the set of positive integers, and we 
use capital letters $A,B,\dots$, to denote subsets of $\NN$ or,
equivalently, strictly increasing sequences of positive integers. We denote
the $n$th elements of such sequences by $a(n)$, $b(n)$, etc. 

It will be convenient to extend the definition of a sequence $\seq{a(n)}$ 
indexed by the natural numbers to a sequence indexed by
the nonnegative integers by setting $a(0)=0$.  For example, this
convention allows us to consider the ``gaps'' $a(n)-a(n-1)$ for all
$n\in\NN$, with the initial gap, $a(1)-a(0)$, having the natural
interpretation as the first element of the sequence.

Given a set $A\subset \NN$, we denote by 
\begin{equation}
\label{eq:def-counting-function}
A(n)=\#\{m\le n: m\in A\}
\end{equation}
the counting function of $A$, i.e., the number of elements of $A$ that are
$\le n$.

We measure the ``closeness'' of two sequences $\seq{a(n)}$
and $\seq{b(n)}$ by the sup-norm 
\begin{align}
\label{eq:def-a-norm}
\|a-b\| &=\sup\{|a(n)-b(n)|: n\in\NN\}.
\end{align}
Thus, $\|a-b\|<\infty$ holds if and only if one sequence can be obtained
from the other by ``perturbing'' each element by a bounded quantity. 
For integers sequences the norm $\|a-b\|$ is attained and 
represents the maximal amount by which one sequence needs to be perturbed in
order to obtain the other sequence.

\subsection{Beatty sequences and almost Beatty sequences}
Given $\alpha\in(0,1)$,
we define the \emph{Beatty sequence of density $\alpha$} as 
\begin{align}
\label{eq:def-beatty-a}
\Ba&=\seq{a(n)},\quad a(n)=\fl{n/\alpha}.
\end{align}

We call a sequence $\Bat=\seq{\at(n)}$ an  \emph{almost Beatty
sequence of density $\alpha$}
if it satisfies $\|\at-a\|<\infty$, where $a(n)=\fl{n/\alpha}$ is
the $n$th term of the Beatty sequence $\Ba$.
Thus, an almost Beatty sequence is a sequence
that can be obtained by perturbing the elements of a Beatty sequence
by a bounded amount.  Equivalently, an almost Beatty sequence
of density $\alpha$ is a sequence $\Bat$ satisfying 
\[
\Bat(N)=\#\{n\le N: n\in\Bat\}=\alpha N +O(1) \quad (N\to\infty).
\]

We use the analogous notation 
\begin{align}
\label{eq:def-beatty-b}
\Bb&=\seq{\fl{n/\beta}}=\seq{b(n)}, 
\quad 
\Bbt=\seq{\bt(n)},
\\
\label{eq:def-beatty-c}
\Bc&=\seq{\fl{n/\gamma}}=\seq{c(n)},
\quad \Bct=\seq{\ct(n)},
\end{align}
to denote Beatty sequences and almost Beatty sequences
of densities $\beta$ and $\gamma$.

\emph{In the sequel, the
notations $a(n)$, $b(n)$, and $c(n)$ will always refer to the 
elements of the Beatty sequences $\Ba,\Bb,\Bc$ 
as defined in 
\eqref{eq:def-beatty-a}, \eqref{eq:def-beatty-b},
and \eqref{eq:def-beatty-c}, 
and $\at(n)$, $\bt(n)$, and $\ct(n)$ will refer to the elements of 
almost Beatty sequences $\Bat$, $\Bbt$, and $\Bct$, of densities 
$\alpha$, $\beta$, and $\gamma$, respectively.
}

\subsection{Partitions into two exact Beatty sequences and one almost
Beatty sequence}

We assume that we are given arbitrary 
positive real numbers 
$\alpha$, $\beta$, and $\gamma$ subject only to the conditions  
\begin{equation}
\label{eq:abc}
\alpha,\beta,\gamma\in \RR^+\setminus\QQ,\quad 
\alpha+\beta+\gamma=1, 
\end{equation}
which are analogous to the conditions in Beatty's theorem (Theorem A).
Our goal is to construct a partition of $\NN$ into almost Beatty sequences
$\Bat,\Bbt,\Bct$ that are as close as possible
to the exact Beatty sequences $\Ba,\Bb,\Bc$, where ``closeness'' is
measured by the distance \eqref{eq:def-a-norm}.

By Uspensky's theorem (Theorem B), a partition of $\NN$ into three exact Beatty
sequences is impossible, so the best we can hope for is a partition into
two exact Beatty sequences $\Ba$ and $\Bb$ and one almost Beatty sequence
$\Bct$.  Our first theorem shows that such partitions do exist, it gives
necessary and sufficient conditions on the densities $\alpha,\beta,\gamma$
under which such a partition exists, and it shows exactly how close the
almost Beatty sequence $\Bct$ is to the exact Beatty sequence $\Bc$.

\begin{theorem}
[Partition into two exact Beatty sequences and one almost
Beatty sequence]
\label{thm:thm1}
Let $\alpha,\beta,\gamma$ satisfy \eqref{eq:abc}. 
Then there exists a partition $\NN=\Bat\cup\Bbt\cup\Bct$ 
with $\Bat=\Ba$, $\Bbt=\Bb$ if and only if
\begin{equation}
\label{eq:thm1-condition}
r\alpha+s\beta=1\quad \text{for some $r,s\in\NN$.}
\end{equation}
If this condition is satisfied, then $\Ba$ and $\Bb$ are disjoint 
and $\Bct=\NN\setminus(\Ba\cup \Bb)$
is an almost Beatty
sequence satisfying
\begin{align}
\label{eq:thm1-bounds}
\|\ct-c\|&
=\max\left(
\left\lfloor \frac{2-\alpha}{1-\alpha}\right\rfloor,
\left\lfloor \frac{2-\beta}{1-\beta}\right\rfloor\right),
\end{align}
where $c(n)=\fl{n/\gamma}$, resp., $\ct(n)$, 
denote the $n$th elements of $\Bc$, resp., $\Bct$.
More precisely, we have
\begin{equation}
\label{eq:thm1-errors}
0\le c(n)-\ct(n)\le \max\left(
\left\lfloor \frac{2-\alpha}{1-\alpha}\right\rfloor,
\left\lfloor \frac{2-\beta}{1-\beta}\right\rfloor\right)
\quad (n\in\NN),
\end{equation}
where the upper bound is attained for infinitely many $n$.
In particular, if $\alpha$ and $\beta$ satisfy 
\begin{equation}
\label{eq:thm1-special-case}
\max(\alpha,\beta)<1/2,
\end{equation}
then we have 
\begin{equation}
\label{eq:thm1-errors-case1}
c(n)-\ct(n)\in\{0,1,2\}
\quad (n\in\NN).
\end{equation}
\end{theorem}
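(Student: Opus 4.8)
The plan is to analyze when the two exact Beatty sequences $\Ba$ and $\Bb$ are disjoint, and then to understand the gap structure of the complementary set $\Bct = \NN\setminus(\Ba\cup\Bb)$. For the disjointness and the characterization \eqref{eq:thm1-condition}, I would first observe that $m\in\Ba$ if and only if $\fp{m\alpha}<\alpha$ (equivalently, $m$ is of the form $\fl{n/\alpha}$), and similarly for $\Bb$; this is the standard reformulation of Beatty membership via the three-distance / fractional-part description. Then $\Ba\cap\Bb\ne\emptyset$ would force a simultaneous fractional-part condition; using the irrationality of $\alpha,\beta$ and equidistribution of $(m\alpha,m\beta)$ in appropriate regions, one sees the intersection is empty precisely when a certain ``resonance'' holds, namely that $1$ lies in the additive semigroup generated by $\alpha$ and $\beta$, i.e., $r\alpha+s\beta=1$ for some $r,s\in\NN$. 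The cleanest route here is probably to note that $\Ba$ and $\Bb$ are disjoint if and only if $\Ba\cup\Bb$ has a well-defined density equal to $\alpha+\beta=1-\gamma$ \emph{and} the union is ``gap-bounded''; disjointness of two Beatty sequences of densities summing to less than $1$ is itself classical (related to Fraenkel's work), and the condition $r\alpha+s\beta=1$ is exactly the condition under which the two sequences ``fit together'' without collision. I would cite or prove this as a lemma, since it is the conceptual heart of the characterization.

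Next, assuming \eqref{eq:thm1-condition} holds, I would count elements: since $\Ba$ and $\Bb$ are disjoint, $(\Ba\cup\Bb)(N) = \Ba(N)+\Bb(N) = \fl{N\alpha}+\fl{N\beta} = N - \cl{N\gamma} + O(1)$, so $\Bct(N) = N - (\Ba\cup\Bb)(N) = \gamma N + O(1)$, confirming that $\Bct$ is an almost Beatty sequence of density $\gamma$. To pin down the exact value of $\|\ct - c\|$, I would study the gaps of $\Bct$. The key point is that a gap in $\Bct$ of length $g$ (i.e., a run of $g-1$ consecutive integers all lying in $\Ba\cup\Bb$, flanked by elements of $\Bct$) corresponds to a maximal block of consecutive integers covered by $\Ba$ and $\Bb$; because $\Ba$ has gaps $\fl{(n+1)/\alpha}-\fl{n/\alpha}\in\{\fl{1/\alpha},\cl{1/\alpha}\}$ and similarly for $\Bb$, and the two interleave, the longest possible run of consecutive covered integers is controlled by how many consecutive $\Ba$-terms or $\Bb$-terms can be ``packed'' — and a short computation shows that the maximal block of consecutive integers in $\Ba$ has length $\fl{\alpha/(1-\alpha)} = \fl{(2-\alpha)/(1-\alpha)}-1$ roughly, giving the $\fl{(2-\alpha)/(1-\alpha)}$ appearing in the statement, with the analogous quantity for $\beta$, and the max of the two governs the worst gap of $\Bct$. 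Translating a bound on the maximal gap of $\Bct$ above the ``expected'' gap $1/\gamma$ into the sup-norm bound $\|\ct-c\|$ is then a matter of the standard relationship between gap excesses and the discrepancy $\ct(n)-c(n)$: since $\ct(n)\le c(n)$ always (the complement of a \emph{subset} of $\Ba\cup\Bb$'s superset... more precisely, because $\Ba\cup\Bb$ contains $\Ba$ which forces $\Bct$ to appear ``early''), one gets $0\le c(n)-\ct(n)$, and the upper bound equals the accumulated gap excess, which is exactly $\max\big(\fl{(2-\alpha)/(1-\alpha)},\fl{(2-\beta)/(1-\beta)}\big)$.

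For the monotonicity $0\le c(n)-\ct(n)$ and the attainment of the upper bound infinitely often, I would argue as follows: $\ct(n)\le c(n)$ because $\Bct\subseteq\NN$ has density $\gamma$ and lies ``inside'' the complement of $\Ba$ alone, whose $n$th element is already $\le \fl{n/\gamma}$-type... — here I would instead give the direct argument that $\Bct(m)\ge c^{-1}$-style counting, i.e., $\Bct(m)\ge \gamma m - (\text{something}\le 0)$ fails, so I would more carefully use: since $\Ba,\Bb$ are disjoint Beatty sequences, $(\Ba\cup\Bb)(m)\le \fl{m\alpha}+\fl{m\beta}\le m\alpha+m\beta = m(1-\gamma)$, hence $\Bct(m)\ge m\gamma$, which forces $\ct(n)\le \fl{n/\gamma}=c(n)$. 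For the lower-bound attainment, equidistribution of $(\{m\alpha\},\{m\beta\})$ guarantees that the extremal gap configuration recurs infinitely often, so the maximal value of $c(n)-\ct(n)$ is attained infinitely often. Finally, the special case \eqref{eq:thm1-special-case}: if $\max(\alpha,\beta)<1/2$ then $(2-\alpha)/(1-\alpha)<3$ and $(2-\beta)/(1-\beta)<3$, so both floors are $\le 2$, and they are $\ge 2$ since $(2-\alpha)/(1-\alpha)>2$ whenever $\alpha>0$; hence the max is exactly $2$ and \eqref{eq:thm1-errors-case1} follows from \eqref{eq:thm1-errors}. The main obstacle I anticipate is the disjointness characterization — proving that $\Ba\cap\Bb=\emptyset$ is equivalent to $r\alpha+s\beta=1$ for some $r,s\in\NN$ — since the forward direction (disjointness $\Rightarrow$ resonance) requires ruling out collisions using a Diophantine/equidistribution argument rather than a purely combinatorial one; the gap-length computation and the translation to the sup-norm, while fiddly, are routine three-distance-theorem bookkeeping.
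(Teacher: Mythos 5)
Your outline handles the parts that are easy in the paper the same way the paper does: the equivalence with \eqref{eq:thm1-condition} is exactly the classical disjointness criterion (Lemma \ref{lem:disjointness}), which the paper simply cites, and your treatment of the special case $\max(\alpha,\beta)<1/2$ is fine. But the two quantitative claims that carry the theorem are not actually established. First, the passage from ``maximal runs in $\Ba$ or $\Bb$'' to the bound \eqref{eq:thm1-errors} is asserted rather than proved: $c(n)-\ct(n)$ is a cumulative discrepancy of counting functions, not a local gap quantity, so a bound on the longest gap of $\Bct$ does not by itself bound $\sup_n\bigl(c(n)-\ct(n)\bigr)$; moreover, runs of $\Ba\cup\Bb$ can splice together runs coming from both sequences, so it is not clear a priori that the maximum of the two individual run lengths governs the worst case. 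The paper instead splits according to \eqref{eq:thm1-proof-sa+sb}: if $r=1$ (or $s=1$) it identifies $\Bct=B_{s\beta}\setminus\Bb$ explicitly and obtains closed formulas $\ct(i(s-1)+j)=\fl{i/\beta+j/(s\beta)}$, $c(i(s-1)+j)=\fl{i/\beta+j/((s-1)\beta)}$, whose difference is bounded by $\fl{1/(1-\alpha)}+1$; if $r,s\ge 2$ it notes $\gamma>\max(\alpha,\beta)$ and imports the counting-function analysis behind Theorem \ref{thm:thm3} (Lemmas \ref{lem:Ect-formula}--\ref{lem:thm3-ct-c}). Your derivation of $0\le c(n)-\ct(n)$ also has an off-by-one: $\Ba(m)=\fl{(m+1)\alpha}$, not $\fl{m\alpha}$ (Lemma \ref{lem:beatty-properties}(ii)), and with your estimate one only gets $\Bct(c(n))\ge n-1$; the correct route is the identity of Lemma \ref{lem:Ba-Bb-Bc-counting-function}, which gives $\Bct(m)=\Bc(m)+\delta(u_m,v_m)\ge\Bc(m)$ once $\Ba$, $\Bb$ are disjoint.

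Second, and more seriously, your argument that the upper bound is attained infinitely often invokes equidistribution of the pairs $(\fp{m\alpha},\fp{m\beta})$, but under hypothesis \eqref{eq:thm1-condition} the numbers $1,\alpha,\beta$ are rationally dependent (since $r\alpha+s\beta=1$), so the two-dimensional Weyl theorem (Lemma \ref{lem:weyl}(ii)) is not available: the pairs $(u_m,v_m)$ are confined to finitely many lines in the unit square, and one must verify that the extremal configuration is compatible with this constraint. This is precisely where the paper works hardest: in Case I it shows $\delta_{i,s-1}=1$ for infinitely many $i$ using only one-dimensional density of $\fp{i\beta+\theta}$; in Case II it uses Proposition \ref{prop:thm3-perturbation-errors} to describe the error-$2$ region, then expresses $v_m$ through $u_m^*=\fp{\alpha(m+1)/s}$ and the residue of $m+1$ modulo $s$, restricts to $m+1\equiv r-1\pmod{s}$, and applies one-dimensional Weyl to $u_m^*$. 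Without an argument of this type, the equality in \eqref{eq:thm1-bounds} (as opposed to the inequality $\le$) is unsupported, and this optimality is the part of the theorem the paper itself emphasizes as best possible.
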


The error bound \eqref{eq:thm1-errors} in this theorem is best-possible in
the strongest possible sense: There does not exist a single triple
$(\alpha,\beta,\gamma)$ of irrational densities satisfying the assumptions
of Theorem \ref{thm:thm1} for which this bound can be improved. In
particular, since 
\begin{equation*}
\left\lfloor \frac{2-t}{1-t}\right\rfloor
=\left\lfloor 1+\frac{1}{1-t}\right\rfloor\ge 2
\quad (0<t<1),
\end{equation*}
it follows that there exists no partition into two exact Beatty sequences and
an almost Beatty sequence whose elements differ from the elements of the
corresponding exact Beatty sequence by at most $1$.
Put differently, if only one of the three Beatty sequences is perturbed, the
minimal amount of perturbation (in the sense of the distance
\eqref{eq:def-a-norm}) needed in order to obtain a partition of $\NN$ is
$2$. Moreover, by Theorem \ref{thm:thm1} a perturbation by at most $2$ is
sufficient if and only if the conditions \eqref{eq:abc} and
\eqref{eq:thm1-condition} are satisfied and $\max(\alpha,\beta)<1/2$.

\begin{example}
\label{ex:thm1}
Let $\alpha=1/\Phi^3$, $\beta=1/\Phi^4$, $\gamma=1/\Phi$, 
where $\Phi=(\sqrt{5}+1)/2=1.61803\dots$ is the golden ratio. Using the relation 
$\Phi^2=\Phi+1$ one can check that $1/\Phi^3+1/\Phi^4+1/\Phi=1$ and 
$3/\Phi^3+2/\Phi^4=1$, so conditions \eqref{eq:abc} and  
\eqref{eq:thm1-condition} of Theorem \ref{thm:thm1}  hold. Moreover, since 
$\max(\alpha,\beta)<1/2$,  by the last part of the theorem 
the perturbation errors $c(n)-\ct(n)$
are in $\{0,1,2\}$. Table \ref{table:thm1-example} shows the partition
$\NN=\Ba\cup\Bb\cup\Bct$ obtained from the theorem, along with the
perturbation errors.  The two exact 
Beatty sequences $\Ba$ and $\Bb$ in this partition are 
the sequences \seqnum{A004976} and \seqnum{A004919} in OEIS \cite{oeis},
while the almost Beatty sequence $\Bct$ is a perturbation  of the sequence
\seqnum{A000201}, obtained by subtracting an appropriate amount in $\{0,
1, 2\}$ from the elements in \seqnum{A000201}.

\begin{table}[H]
\begin{center}
\begin{tabular}{|l|ccccccccccccccc|}
\hline
\rowcolor{lightgray}
$a(n)$ 
&4&8&12&16&21&25&29&33&38&42&46&50&55&59&63
\\
\hline
\rowcolor{lightgray}
$b(n)$ &6&13&20&27&34&41&47&54&61&68&75&82&89&95&102
\\
\hline
$c(n)$ &1&3&4&6&8&9&11&12&14&16&17&19&21&22&24
\\
\hline
\rowcolor{lightgray}
$\ct(n)$ & 1&2&3&5&7&9&10&11&14&15&17&18&19&22&23
\\
\hline
Error &0&1&1&1&1&0&1&1&0&1&0&1&2&0&1
\\
\hline
\end{tabular}
\caption{
An almost Beatty partition with
$\alpha=1/\Phi^3$, 
$\beta=1/\Phi^4$,
$\gamma=1/\Phi$. (The three highlighted rows form the partition.)
}
\label{table:thm1-example}
\end{center}
\end{table}
\end{example}

\subsection{Partitions into one exact Beatty sequence and two almost Beatty
sequences}

We next consider partitions into one exact Beatty sequence and two almost Beatty
sequences. In contrast to the situation in Theorem \ref{thm:thm1}, here the
two almost Beatty sequences are not uniquely determined.  We give two
constructions that lead to different partitions.  Our first approach is based on
iterating the two-part Beatty partition process and leads to the following
result. 

\begin{theorem}[Partition into one exact Beatty sequence and two almost
Beatty sequences---Construction I]
\label{thm:thm2}
Let $\alpha,\beta,\gamma$ satisfy \eqref{eq:abc} and suppose
\begin{equation}
\label{eq:thm2-condition}
\frac{\beta}{\gamma}\not\in\QQ.
\end{equation}
Let
\begin{equation}
\label{eq:thm2-construction}
\bt(n)=\Fl{\Fl{\frac{1-\alpha}{\beta}n}\frac{1}{1-\alpha}},
\quad
\ct(n)=\Fl{\Fl{\frac{1-\alpha}{\gamma}n}\frac{1}{1-\alpha}}.
\end{equation}
Then the sequences $\Ba$, $\Bbt=\seq{\bt(n)}$, and
$\Bct=\seq{\ct(n)}$
form an almost Beatty partition of $\NN$  satisfying
\begin{align}
\label{eq:thm2-bounds}
\|\bt-b\|&\le 
\left\lfloor \frac{2-\alpha}{1-\alpha}\right\rfloor,
\quad \|\ct-c\|\le \left\lfloor \frac{2-\alpha}{1-\alpha}\right\rfloor.
\end{align}
In particular, if $\alpha$ satisfies
\begin{equation}
\label{eq:thm2-special-case}
\alpha<1/2,
\end{equation}
then we have 
\begin{equation}
\label{eq:thm1-bounds2}
\|\bt-b\|\le 2,\quad \|\ct-c\|\le 2.
\end{equation}
\end{theorem}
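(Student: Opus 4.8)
\textbf{Proof proposal for Theorem \ref{thm:thm2}.}
The plan is to view the construction \eqref{eq:thm2-construction} as two nested applications of the two-part Beatty partition. First I would observe that $\beta/(1-\alpha)+\gamma/(1-\alpha)=1$, and that these two numbers are irrational precisely because of hypothesis \eqref{eq:thm2-condition}: if $\beta/(1-\alpha)$ were rational, then so would be $\gamma/(1-\alpha)=1-\beta/(1-\alpha)$, and hence $\beta/\gamma$ would be rational, contradicting \eqref{eq:thm2-condition}. Therefore, by Beatty's theorem (Theorem A) applied with densities $\beta'=\beta/(1-\alpha)$ and $\gamma'=\gamma/(1-\alpha)$, the inner Beatty sequences $u(n)=\fl{n/\beta'}=\fl{\frac{1-\alpha}{\beta}n}$ and $v(n)=\fl{n/\gamma'}=\fl{\frac{1-\alpha}{\gamma}n}$ partition $\NN$.

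The second step is to show that the outer map $m\mapsto \fl{m/(1-\alpha)}$, applied to the partition $\NN=\{u(n)\}\cup\{v(n)\}$, produces the complement of $\Ba$ in $\NN$. The key point is that $\fl{m/(1-\alpha)}$, as $m$ runs over $\NN$, enumerates exactly the Beatty sequence $B_{1-\alpha}$ of density $1-\alpha$, and by Theorem A$^*$ (or directly, since $\alpha$ and $1-\alpha$ are irrational and sum to $1$) the sequence $B_{1-\alpha}$ is precisely the complement $\NN\setminus\Ba$. Since $m\mapsto\fl{m/(1-\alpha)}$ is strictly increasing (because $1/(1-\alpha)>1$, consecutive values differ by at least $1$), it is a bijection from $\NN$ onto $\NN\setminus\Ba$, and it carries the partition $\{u(n)\}\cup\{v(n)\}=\NN$ to the partition $\{\bt(n)\}\cup\{\ct(n)\}=\NN\setminus\Ba$. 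Combining with $\Ba$ gives the desired three-part partition $\NN=\Ba\cup\Bbt\cup\Bct$.

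The third step is the error estimate. Here I would compare $\bt(n)=\fl{u(n)/(1-\alpha)}$ with $b(n)=\fl{n/\beta}$. Writing $u(n)=\fl{n(1-\alpha)/\beta}$, we have $u(n)=n(1-\alpha)/\beta-\fp{n(1-\alpha)/\beta}$, so $u(n)/(1-\alpha)=n/\beta-\fp{n(1-\alpha)/\beta}/(1-\alpha)$, which lies in the interval $(n/\beta-1/(1-\alpha),\,n/\beta]$. Taking floors, $\bt(n)$ and $b(n)=\fl{n/\beta}$ can differ by at most $\fl{1/(1-\alpha)}+1=\fl{(2-\alpha)/(1-\alpha)}$ (using the identity $\fl{(2-\alpha)/(1-\alpha)}=\fl{1+1/(1-\alpha)}=1+\fl{1/(1-\alpha)}$ noted after the theorem), and since the subtracted quantity is nonnegative we in fact get $0\le b(n)-\bt(n)\le\fl{(2-\alpha)/(1-\alpha)}$; the same argument with $\gamma$ in place of $\beta$ handles $\ct$. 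The special case \eqref{eq:thm1-bounds2} is then immediate since $\alpha<1/2$ forces $1/(1-\alpha)<2$, hence $\fl{(2-\alpha)/(1-\alpha)}=2$.

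The main obstacle I anticipate is the bookkeeping in the second step: one must be careful that the indexing of the outer map interacts correctly with the inner partition, i.e., that applying a strictly increasing surjection $\NN\to\NN\setminus\Ba$ to a partition of $\NN$ genuinely yields a partition of the image and that no collisions or omissions occur. This is where the strict monotonicity of $m\mapsto\fl{m/(1-\alpha)}$ (equivalently, the fact that $B_{1-\alpha}$ has density $1-\alpha<1$ so its gaps are all $\ge 1$) is essential, and it should be isolated as a small lemma. The error estimate in the third step is routine once the interval containment for $u(n)/(1-\alpha)$ is written down carefully, with the only subtlety being to track the sign so as to get the one-sided bound $0\le b(n)-\bt(n)$ rather than merely $|b(n)-\bt(n)|$.
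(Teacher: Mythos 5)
Your proposal is correct and follows essentially the same route as the paper: both apply Beatty's theorem twice (once for the pair $(\alpha,1-\alpha)$, once for the pair $\beta/(1-\alpha)$, $\gamma/(1-\alpha)$, with the irrationality of $\beta/\gamma$ justifying the second application) and then derive the bound $0\le b(n)-\bt(n)\le\fl{1/(1-\alpha)}+1=\fl{(2-\alpha)/(1-\alpha)}$ by the same floor-function estimate. Your phrasing of the second step as pushing the inner partition forward through the strictly increasing enumeration $m\mapsto\fl{m/(1-\alpha)}$ of $\NN\setminus\Ba$ is just a reordering of the paper's presentation, not a different argument.
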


Our second construction consists 
of starting out with two exact Beatty sequences 
$\Ba$ and $\Bb$, and then shifting those elements of $\Bb$ 
that also belong to $\Ba$ to get a  sequence $\Bbt$ that is disjoint 
from $\Ba$.

\begin{theorem}[Partition into one exact Beatty sequence and two almost
Beatty sequences---Construction II]
\label{thm:thm3}
Let $\alpha,\beta,\gamma$ satisfy \eqref{eq:abc} and suppose
\begin{equation}
\label{eq:thm3-condition}
\max(\alpha,\beta)<\gamma. 
\end{equation}
Let  $\Bbt=\seq{\bt(n)}$ be defined by 
\begin{equation}
\label{eq:thm3-bt-def}
\bt(n)=\begin{cases}
b(n),
&\text{if $b(n)\not\in\Ba$;}
\\
b(n)-1,&\text{if $b(n)\in\Ba$,}
\end{cases}
\end{equation}
and let $\Bct=\NN\setminus(\Ba\cup \Bbt)=\seq{\ct(n)}$. 
Then the sequences $\Ba$, $\Bbt$, and $\Bct$
form an almost Beatty partition of $\NN$  satisfying
\begin{align}
\label{eq:thm3-bounds}
\|\bt-b\|&\le 1,\quad \|\ct-c\|\le 2.
\end{align}
More precisely, we have 
\begin{align}
\label{eq:thm3-bounds-b}
b(n)-\bt(n)&\in\{0,1\}
\quad (n\in\NN),
\\
\label{eq:thm3-bounds-c}
c(n)-\ct(n)&\in\{0,1,2\}
\quad (n\in\NN).
\end{align}
\end{theorem}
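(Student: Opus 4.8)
\textbf{Proof plan for Theorem \ref{thm:thm3}.}
The plan is to first verify that $\Ba$, $\Bbt$, $\Bct$ genuinely partition $\NN$, and then to control the perturbation errors. The key structural fact is that, under the hypothesis $\max(\alpha,\beta)<\gamma$, the density condition $\alpha+\beta+\gamma=1$ forces $\alpha+\beta<2/3$, so $\alpha,\beta$ are both small; in particular consecutive elements of $\Ba$ (resp.\ $\Bb$) are spaced by at least $2$, which means that when $b(n)\in\Ba$ the shifted value $b(n)-1$ cannot collide with an element of $\Ba$. The first step is therefore to show $\Bbt$ is a strictly increasing sequence disjoint from $\Ba$: one checks that the map $b(n)\mapsto\bt(n)$ preserves order (if $b(n)\in\Ba$ and $b(n+1)\in\Ba$ then $b(n+1)-b(n)\ge 2$ by the gap bound, while if only one of them lies in $\Ba$ the shift by $1$ cannot reverse the inequality since $b(n+1)>b(n)$ already), and that $\bt(n)\notin\Ba$ in both cases of \eqref{eq:thm3-bt-def} --- the first case by definition, the second because $b(n)-1$ is sandwiched between two elements of $\Ba$ that differ by at least $2$. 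Once $\Bbt$ is disjoint from $\Ba$, the set $\Bct=\NN\setminus(\Ba\cup\Bbt)$ is automatically well-defined, and since $|\Ba\cap[1,N]|=\alpha N+O(1)$ and $|\Bbt\cap[1,N]|=|\Bb\cap[1,N]|=\beta N+O(1)$ (shifting finitely many elements by a bounded amount does not change the counting function beyond $O(1)$), we get $\Bct(N)=\gamma N+O(1)$, so $\Bct$ is an almost Beatty sequence of density $\gamma$; this is exactly the $\|\ct-c\|<\infty$ needed to even speak of the bound.

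The second step is the bound $b(n)-\bt(n)\in\{0,1\}$, which is immediate from the definition \eqref{eq:thm3-bt-def}. The heart of the matter is \eqref{eq:thm3-bounds-c}, the claim that $c(n)-\ct(n)\in\{0,1,2\}$. I would prove this via the counting-function identity $\ct(n)\le m \iff \Bct(m)\ge n$ (valid for any increasing integer sequence), which reduces the problem to estimating $\Bct(m)=m-\Ba(m)-\Bbt(m)$ against $\Bc(m)=m-\fl{m\gamma}$-type quantities. Concretely, $\Bbt(m)$ differs from $\Bb(m)=\fl{m\beta}$ by at most $1$ (the shift operation changes the counting function of an initial segment by at most $1$, since each shifted element moves left by exactly $1$), and $\Ba(m)=\fl{m\alpha}$ exactly. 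Hence $\Bct(m)=m-\fl{m\alpha}-\fl{m\beta}+\epsilon(m)$ with $\epsilon(m)\in\{0,1\}$, whereas the exact Beatty count is $\Bc(m)=\fl{m\gamma}=\fl{m(1-\alpha-\beta)}=m-\fl{m\alpha}-\fl{m\beta}-\theta(m)$ with $\theta(m)\in\{0,1,2\}$ accounting for the floor-of-sum discrepancy $\fl{m\alpha}+\fl{m\beta}+\fl{m\gamma}\in\{m-2,m-1\}$ (here one uses that $\{m\alpha\}+\{m\beta\}+\{m\gamma\}\in(0,3)$ but is never an integer, and is in fact in $(1,2)$ for all $m$ because... ). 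Comparing, $\Bct(m)-\Bc(m)\in\{0,1,2,3\}$ a priori, and translating this back through the inverse-function relation gives $c(n)-\ct(n)$ in a bounded range; the careful bookkeeping pins it down to $\{0,1,2\}$.

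The main obstacle I anticipate is the last sign-tracking step: turning an inequality between the counting functions $\Bct$ and $\Bc$ into the sharp two-sided bound \eqref{eq:thm3-bounds-c} on $c(n)-\ct(n)$, because the ``off-by-one'' contributions from three different sources (the floor in $a$, the floor in $b$, and the shift defining $\bt$) can in principle stack up, and one has to show they partially cancel rather than accumulate. The cleanest route is probably to avoid summing errors blindly and instead argue directly: since $\Ba$ and $\Bbt$ are disjoint and $\Ba\cup\Bbt$ has density $\alpha+\beta<2/3$, the complement $\Bct$ has density $\gamma>1/3$, so gaps in $\Bct$ are at most $2$; combined with the fact that $\Bct\supseteq\NN\setminus(\Ba\cup\Bb)$ except for the finitely-many-type corrections, one shows $\ct(n)$ lies between $c(n)-2$ and $c(n)$ by a direct comparison of where the $n$th ``hole'' of $\Ba\cup\Bb$ sits versus the $n$th ``hole'' of $\Ba\cup\Bbt$. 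I would also invoke, if available from the earlier sections, the elementary lemma that for disjoint Beatty-type sets the complement's $n$th element equals $\cl{\text{something}}$ up to $O(1)$; if Section \ref{sec:lemmas} provides such a lemma, the proof of \eqref{eq:thm3-bounds-c} becomes essentially a one-line application of it together with the gap bound $\le 2$.
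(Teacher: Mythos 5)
Your first step (disjointness of $\Ba$ and $\Bbt$ via the gap bound coming from $\alpha<1/2$, hence the partition property) and the trivial bound \eqref{eq:thm3-bounds-b} match the paper's argument and are fine. The genuine gap is in the heart of the matter, the bound \eqref{eq:thm3-bounds-c}: you never actually rule out the value $c(n)-\ct(n)=3$, and the shortcut you propose for doing so rests on a false claim. A set of density $\gamma>1/3$ does \emph{not} have gaps bounded by $2$ --- even the exact Beatty sequence $\Bc$ with $1/3<\gamma<1/2$ has gaps equal to $\fl{1/\gamma}+1=3$ infinitely often, and $\Bct$ is not a Beatty sequence at all, so ``density $>1/3$, hence gaps $\le 2$'' is not a valid inference. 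Your counting-function route, as you set it up, only yields an a priori range like $\{0,1,2,3\}$ (and your bookkeeping has slips: with the paper's normalization $\Ba(m)=\fl{\alpha(m+1)}$, and since shifting elements of $\Bb$ down can only \emph{increase} the count, one gets $\Bct(m)=m-\Ba(m)-\Bb(m)-\Ebt(m)$ with $\Ebt(m)\in\{0,1\}$, so your $\epsilon(m)$ has the wrong sign; also $\fp{m\alpha}+\fp{m\beta}+\fp{m\gamma}$ is exactly $1$ or $2$, not ``in $(1,2)$'', and your sentence justifying that claim trails off). The paper's proof needs real work precisely at this point: after showing the counting-function discrepancy $\Ect(m)=\Bct(m)-\Bc(m)$ lies in $\{0,1\}$ (Lemmas \ref{lem:Ebt-formula} and \ref{lem:Ect-formula}, where nonnegativity itself uses $\alpha+\beta<1$), it deduces $c(n-1)\le\ct(n)\le c(n)$ (Lemma \ref{lem:thm3-cn-1-ctn-cn}), which only gives $c(n)-3\le\ct(n)$ when $1/3<\gamma<1/2$; the exclusion of the value $3$ (Lemma \ref{lem:thm3-ct-c}) is a separate contradiction argument combining the gap criterion of Lemma \ref{lem:beatty-properties}(iv) with the explicit characterization of $\Ect(m)=1$, and it is exactly here that the full hypothesis \eqref{eq:thm3-condition}, $\gamma>\max(\alpha,\beta)$, is used (one derives $\min(\alpha,\beta)<1-2\gamma$ and contradicts $\alpha+\beta+\gamma=1$).

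This is a structural warning sign you should have caught: your outline uses the hypothesis $\max(\alpha,\beta)<\gamma$ only through the weak consequences $\alpha,\beta<1/2$ and $\gamma>1/3$, yet the sharp constant $2$ in \eqref{eq:thm3-bounds-c} genuinely depends on the stronger assumption. There is also no lemma in Section \ref{sec:lemmas} of the kind you hope for (a closed form, up to $O(1)$, for the $n$th element of the complement of a union), so the ``one-line application'' fallback is not available. To repair your proposal, keep your counting-function framework but prove the two exact statements $\Ect(m)\in\{0,1\}$ and the characterization of when $\Ect(m)=1$ in terms of $u_m=\fp{(m+1)\alpha}$, $v_m=\fp{(m+1)\beta}$, then run the dedicated argument excluding $\ct(n)=c(n)-3$ along the lines just described.
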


Our proof will yield a more precise result that gives necessary and
sufficient  conditions for each of the three possible values in 
\eqref{eq:thm3-bounds-c}, and which allows one, in principle, 
to determine the relative frequencies of these values.

\begin{example}
\label{ex:thm3}
Let $\tau=1.83929\dots$ be the Tribonacci constant, defined as 
the positive root of $1/\tau+1/\tau^2+1/\tau^3=1$,
and let $\alpha=1/\tau^3$, $\beta=1/\tau^2$, $\gamma=1/\tau$.
Then $\alpha,\beta,\gamma$ satisfy the conditions \eqref{eq:abc} and 
\eqref{eq:thm3-condition} of Theorem \ref{thm:thm3}.
Thus, Theorem \ref{thm:thm3} can be applied to
yield a partition of $\NN$ into the exact Beatty sequence $\Ba=B_{1/\tau^3}$ and 
two almost Beatty sequences $\Bbt$ and $\Bct$ of densities $1/\tau^2$ and
$1/\tau$, respectively, with perturbation errors in $\{0,1\}$ for the
former sequence, and in $\{0,1,2\}$ for the latter sequence.
The resulting partition is
shown in Table \ref{table:tribonacci} below.

The sequences $\Ba,\Bb,\Bc$ are the OEIS
sequences \seqnum{A277723}, 
\seqnum{A277722},
\seqnum{A158919}, 
respectively, while sequence \seqnum{A277728}
represents the numbers not in any of these sequences.  The OEIS entry for
the latter sequence mentions three related sequences, \seqnum{A003144},
\seqnum{A003145}, and
\seqnum{A003146}, that do form a partition of $\NN$ and which differ from $\Ba$,
$\Bb$, and $\Bc$ by at most $3$. The partition obtained by Theorem
\ref{thm:thm3} is
different from this partition, and it yields a better approximation to a
proper Beatty partition, with perturbation errors of $2$ in the case of
$\Bct$, and $1$ in the case of $\Bbt$.

\begin{table}[H]
\begin{center}
\begin{tabular}{|l|c|c|c|c|c|c|c|c|c|c|c|c|c|c|c|}
\hline
\rowcolor{lightgray}
$a(n)$ & 6&12&18&24&31&37&43&49&56&62&68&74&80&87&93
\\
\hline
$b(n)$ & 3&6&10&13&16&20&23&27&30&33&37&40&43&47&50
\\
\hline
\rowcolor{lightgray}
$\bt(n)$ & 3&5&10&13&16&20&23&27&30&33&36&40&42&47&50
\\
\hline
Error & 0&1&0&0&0&0&0&0&0&0&1&0&1&0&0
\\
\hline
$c(n)$ & 1&4&6&7&10&13&13&14&17&19&21&23&24&25&28
\\
\hline
\rowcolor{lightgray}
$\ct(n)$ & 1&3&5&7&9&11&12&14&16&18&20&22&23&25&27
\\
\hline
Error & 0&1&1&0&1&2&1&0&1&1&1&1&1&0&1
\\
\hline
\end{tabular}
\caption{
An almost Beatty partition with
$\alpha=1/\tau^3$, 
$\beta=1/\tau^2$,
$\gamma=1/\tau$.
}
\label{table:tribonacci}
\end{center}
\end{table}
\end{example}

It is interesting to compare the constructions of Theorem \ref{thm:thm2}
and Theorem \ref{thm:thm3}. 
Both constructions are applicable under slightly different additional 
conditions beyond \eqref{eq:abc}: Theorem \ref{thm:thm2} requires that $\beta/\gamma$ be
irrational, while Theorem \ref{thm:thm3} requires that $\gamma>\max(\alpha,\beta)$.
Thus, the results are not directly comparable. However, in cases where both
constructions can be applied, the construction of Theorem \ref{thm:thm3} yields
stronger bounds than those that can be obtained from Theorem \ref{thm:thm2}, namely
$\|\bt-b\|\le 1$ and $\|\ct-c\|\le 2$ instead of 
$\|\bt-b\|\le 2$ and $\|\ct-c\|\le 2$.

\subsection{Partitions into one exact Beatty sequence and two 
almost Beatty sequences: The case of two equal densities}

In the case when the densities $\alpha$ and $\beta$ are equal,
we can improve on the bounds 
\eqref{eq:thm3-bounds} of Theorem \ref{thm:thm3}. 

\begin{theorem}[Partition into one exact Beatty sequence and two almost
Beatty sequences---Special case]
\label{thm:thm4}
Let $\alpha,\beta,\gamma$ satisfy \eqref{eq:abc}, and 
suppose $\beta=\alpha$. Let 
$\Bbt=\seq{\bt(n)}$ be defined by 
\begin{equation}
\label{eq:thm4-bt-def}
\bt(n)= b(n)-1\quad (n\in\NN),
\end{equation}
and let $\Bct=\NN\setminus(\Ba\cup \Bbt)=\seq{\ct(n)}$. 
Then the sequences $\Ba$, $\Bbt$, and $\Bct$
form an almost Beatty partition of $\NN$  satisfying
\begin{equation}
\label{eq:thm4-1-1-bounds}
\|\bt-b\|=1,\quad \|\ct-c\|\le 1.
\end{equation}
More precisely, we have 
\begin{align}
\label{eq:thm4-b-bound}
b(n)-\bt(n)&=1
\quad (n\in\NN), 
\\
\label{eq:thm4-c-bound}
c(n)-\ct(n)&\in\{0,1\}
\quad (n\in\NN).
\end{align}
\end{theorem}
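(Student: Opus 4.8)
\textbf{Proof proposal for Theorem \ref{thm:thm4}.}

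The plan is to treat this as the special case $\alpha=\beta$ of the Construction II in Theorem \ref{thm:thm3}, but to exploit the extra structure to sharpen both the perturbation of $\Bbt$ (from ``$\in\{0,1\}$'' to ``$=1$'') and the perturbation of $\Bct$ (from ``$\in\{0,1,2\}$'' to ``$\in\{0,1\}$''). First I would establish that, with $\alpha=\beta$, the Beatty sequences $\Ba$ and $\Bb$ coincide \emph{as sets}, so that $b(n)=a(n)=\fl{n/\alpha}$ for all $n$; hence every element of $\Bb$ lies in $\Ba$, and the case distinction in \eqref{eq:thm3-bt-def} collapses to the single rule $\bt(n)=b(n)-1$, which is exactly \eqref{eq:thm4-bt-def}. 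This immediately gives \eqref{eq:thm4-b-bound} and the equality $\|\bt-b\|=1$, provided we check that the shifted sequence $\Bbt=\seq{b(n)-1}$ is still strictly increasing with positive terms: since $b(1)=\fl{1/\alpha}\ge 2$ (because $\alpha<1/2$, as $2\alpha+\gamma=1$ forces $\alpha<1/2$), we have $\bt(1)\ge 1$, and strict monotonicity is inherited from $\Bb$; I would also note $\Bbt$ has density $\alpha$, so it is an almost Beatty sequence of density $\beta=\alpha$.

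Next I would verify that $\Ba$ and $\Bbt$ are disjoint, so that $\Bct=\NN\setminus(\Ba\cup\Bbt)$ is well-defined and has density $1-2\alpha=\gamma$, hence is an almost Beatty sequence of density $\gamma$ once we bound $\|\ct-c\|$. Disjointness amounts to showing $a(m)\ne b(n)-1$ for all $m,n$, i.e.\ that $\fl{m/\alpha}+1$ is never of the form $\fl{n/\alpha}$; equivalently, the set $\Ba-1=\{a(n)-1\}$ (which is $\Bbt$) avoids $\Ba$. This is a standard fact about a single Beatty sequence with irrational density $\alpha<1/2$: consecutive elements of $\Ba$ differ by either $\fl{1/\alpha}$ or $\fl{1/\alpha}+1$, both $\ge 2$, so shifting down by $1$ lands strictly between two consecutive elements of $\Ba$ and never hits $\Ba$ itself. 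I would phrase this using the three-distance / gap structure of Beatty sequences, or cite whichever gap lemma appears in Section \ref{sec:lemmas}.

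Now for $\Bct$: with $\Ba\cup\Bbt = \Ba\cup(\Ba-1)$, the complement $\Bct$ consists precisely of the integers $k$ such that neither $k$ nor $k+1$ lies in $\Ba$ --- i.e., $k$ such that the ``block'' $\{k,k+1\}$ misses $\Ba$. Since the gaps of $\Ba$ are $\fl{1/\alpha}$ or $\fl{1/\alpha}+1$, between two consecutive elements $a(j)<a(j+1)$ of $\Ba$ the integers removed are exactly $a(j)$, $a(j)-1$, $a(j+1)$, $a(j+1)-1$ (with $a(j)-1\ne a(j+1)-1$ and, by the gap bound $\ge 2$, $a(j)\ne a(j+1)-1$), so from each gap of length $g\in\{\fl{1/\alpha},\fl{1/\alpha}+1\}$ we delete $3$ integers when $g=\fl{1/\alpha}$... hold on --- more carefully: in the open interval $(a(j),a(j+1))$ there are $g-1$ integers, of which we additionally remove $a(j+1)-1$ (the predecessor of the next element) and we have already removed $a(j)$ at the left endpoint; the net effect is that $\Bct$ retains $g-2$ integers from each block determined by a gap of size $g$. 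Thus $\Bct$ is obtained from $\NN$ by a completely explicit ``deletion pattern'' tied to the gap sequence of $\Ba$, and I would write $\ct(n)$ in closed form, something like $\ct(n)=n + (\text{number of deleted integers} \le \ct(n))$, and compare with $c(n)=\fl{n/\gamma}$. The cleanest route is to count: for a threshold $N$, $\#\{k\le N: k\in\Bct\} = N - \#(\Ba\cap[1,N]) - \#(\Bbt\cap[1,N]) + \#((\Ba\cap\Bbt)\cap[1,N]) = N - 2\fl{(N+1)\alpha}+O(1)$ using $\Ba(N)=\fl{(N+1)\alpha}$ or the like, and the $O(1)$ is controlled tightly enough to pin the discrepancy between $\Bct$ and $\Bc$ to at most $1$; then translate the counting-function estimate into the pointwise bound \eqref{eq:thm4-c-bound} via the standard equivalence between $A(N)=\alpha N+O(1)$ bounds and $|a(n)-\fl{n/\alpha}|$ bounds (Lemma in Section \ref{sec:lemmas}). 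I would also check $0\le c(n)-\ct(n)$, i.e.\ $\ct(n)\le c(n)$, which follows because deleting elements only pushes the $n$th survivor further out, combined with a count showing we never delete ``too few'' to overshoot.

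\textbf{Main obstacle.} The delicate part is \eqref{eq:thm4-c-bound}, upgrading the generic bound $c(n)-\ct(n)\in\{0,1,2\}$ of Theorem \ref{thm:thm3} to $\{0,1\}$. In the general Construction II the value $2$ occurs precisely when two consecutive elements of $\Bb$ both get shifted into a short gap of $\Ba$, piling up two deletions near one point; when $\alpha=\beta$ the deletions are instead the perfectly regular pattern ``remove $a(j)$ and $a(j)-1$'', i.e.\ deletions come in adjacent pairs separated by runs of length $\ge 2$ of survivors, and one must show this regularity prevents the error from ever reaching $2$. I expect this to reduce to a clean inequality comparing $\fl{n/\gamma}$ with $n$ plus twice a Beatty count, using $\gamma = 1-2\alpha$ and the irrationality of $\alpha$ (equivalently of $\gamma$); the three-distance theorem or a direct fractional-part manipulation should close it, but getting the constant exactly $1$ rather than $2$ will require care about the $O(1)$ error terms and possibly a separate check of small $n$.
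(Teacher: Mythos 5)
Your treatment of the easy parts is fine and matches the paper: with $\alpha=\beta$ the rule \eqref{eq:thm3-bt-def} collapses to $\bt(n)=b(n)-1$, which gives \eqref{eq:thm4-b-bound} at once, and disjointness of $\Ba$ and $\Bbt$ follows because $\alpha<1/2$ forces the gaps of $\Ba$ to be at least $2$ (Lemma \ref{lem:beatty-properties}(iii)), so the partition property holds. The genuine gap is exactly where you flag your ``main obstacle'': the claim \eqref{eq:thm4-c-bound} is never actually proved. Your proposed route --- estimate the counting function $\Bct(N)=N-2\fl{(N+1)\alpha}+O(1)$ and then invoke a ``standard equivalence'' between counting-function bounds and element-wise bounds --- cannot deliver the constant $1$. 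A counting-function discrepancy of at most $1$ only bounds $c(n)-\ct(n)$ by roughly the maximal gap of $\Bc$, which is $\fl{1/\gamma}+1\ge 2$; indeed the paper points out (end of Section \ref{sec:concluding-remarks}) that the counting-function norm satisfies $\|\Bc-\Bct\|\le 1$ in Theorems \ref{thm:thm1}, \ref{thm:thm3}, and \ref{thm:thm4} alike, even though the element-wise error is $2$ in Theorem \ref{thm:thm3} and can be arbitrarily large in Theorem \ref{thm:thm1}. So no argument at the level of $O(1)$ counting estimates can distinguish the case $\alpha=\beta$ from the general case.

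What is actually needed, and what the paper supplies, is a pointwise analysis of where the counting error $\Ect(m)=\Bct(m)-\Bc(m)$ equals $1$. Specializing Lemma \ref{lem:Ect-formula} to $u_m=v_m$ gives $\Ect(m)=1$ if and only if $1/2<u_m<1-\alpha$ (Lemma \ref{lem:thm4-Ect-formula}). If $c(n)-\ct(n)=2$ occurred, then with $m=\ct(n)$ one would have $\Ect(m)\ge 1$ \emph{and} $\Ect(m+1)\ge 1$; since $u_{m+1}=\fp{u_m+\alpha}$, these two window conditions force $1/2<u_m<1-2\alpha$, hence $\gamma=1-2\alpha>1/2$, so the gaps of $\Bc$ are $1$ or $2$ and necessarily $\ct(n)=c(n-1)=c(n)-2$. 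The gap criterion (Lemma \ref{lem:beatty-properties}(iv)) then gives $w_m<1-\gamma=2\alpha$, while the identity $w_m=1-\fp{2u_m}=2(1-u_m)>4\alpha$ gives the opposite, a contradiction. This interplay between the condition for $\Ect=1$ at two \emph{consecutive} integers and the gap criterion for $\Bc$ is the heart of the theorem; your sketch gestures at ``a clean inequality'' or the three-distance theorem but does not identify this mechanism, so as written the proof of \eqref{eq:thm4-c-bound} is missing.
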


\begin{example}
\label{ex:thm4}
Let $\alpha=\beta=1/\Phi^2$ and $\gamma=1/\Phi^3$, where $\Phi$ is the
golden ratio.  Since $1/\Phi^3=1/\Phi-1/\Phi^2$, we have
$\alpha+\beta+\gamma=1/\Phi^2+1/\Phi=1$, so the density condition
\eqref{eq:abc} is satisfied and Theorem \ref{thm:thm4} yields 
an almost Beatty partition consisting of the sequences
$\Ba=\seq{\fl{\Phi^2n}}$, 
$\Bb=\Ba-1=\seq{\fl{\Phi^2n}-1}$, 
and $\Bct$, where the elements of $\Bct$ differ from those of 
$\Bc=\seq{\fl{\Phi^3n}}$ by at most $1$.
In fact, the elementary identity 
$\fl{\Phi^2n}-1=\fl{\Phi\fl{\Phi n}}$
shows that this partition is the same as that obtained by
Theorem \ref{thm:thm2}, namely 
\[
\NN=\seq{\fl{\Phi^2n}}\cup
\seq{\fl{\Phi\fl{\Phi n}}}\cup \seq{\fl{\Phi\fl{\Phi^2 n}}}.
\]
This particular partition is known. It was mentioned in Skolem
\cite[p.~68]{skolem1957}, and it can be interpreted in terms of Wythoff
sequences; see the OEIS sequence \seqnum{A003623}. 

We remark that, while the identity $\fl{\Phi^2n}-1=\fl{\Phi\fl{\Phi n}}$,
and hence the connection with the
iterated Beatty partition construction, is closely tied to properties of 
the golden ratio, Theorem \ref{thm:thm4} shows that almost Beatty
partitions of the same quality (i.e., with one exact Beatty sequence and
two almost Beatty sequences with perturbation errors at most $1$) exist
whenever two of the densities $\alpha,\beta,\gamma$ are equal.
\end{example}

The constructions of both Theorem \ref{thm:thm4}
and Theorem \ref{thm:thm1} can be viewed as special
cases of the construction of Theorem \ref{thm:thm3}. Indeed, if $\Ba=\Bb$, then the
formula for $\bt(n)$ of Theorem \ref{thm:thm3}  reduces to $\bt(n)=b(n)-1$ for all
$n$,  while in the case when $\Ba$ and $\Bb$ are disjoint, this formula
yields $\bt(n)=b(n)$ for all $n$ and thus $\Bbt=\Bb$.  We note, however, 
that Theorems \ref{thm:thm1}  and \ref{thm:thm4}
are more general in one respect: they
do not require the condition \eqref{eq:thm3-condition} of Theorem
\ref{thm:thm3}.

\subsection{A non-existence result}
As mentioned in the remarks following Theorem \ref{thm:thm1}, the bounds on
the perturbation errors in this result are best-possible. As a consequence,
there exists no partition $\NN=\Bat\cup\Bbt\cup\Bct$  such that $\Bat=\Ba$
and $\Bbt=\Bb$ are exact Beatty sequences and $\Bct$ is an almost Beatty
sequence satisfying $\|\ct-c\|\le 1$.

A similarly universal optimality result does not hold for the bound 
$\|c-\ct\|\le 2$ in Theorem \ref{thm:thm3}. Indeed, as Theorem
\ref{thm:thm4} shows, in the case of two equal densities this bound can be
improved to $\|c-\ct\|\le 1$. 
In the following theorem we show that, for ``generic'' densities
$\alpha,\beta,\gamma$ with $\alpha>1/3$,  the error 
bound $\|c-\ct\|\le 2$ is indeed best-possible. 

\begin{theorem}
[Non-existence of partitions into an exact Beatty sequence and two
almost Beatty sequences with perturbation errors $\le 1$] 
\label{thm:thm5}
Let $\alpha,\beta,\gamma$ satisfy \eqref{eq:abc} and suppose that 
\begin{align}
\label{eq:thm5-alpha-bound}
&\alpha>1/3,
\\
&\text{$1,\alpha,\beta$ are linearly independent over $\QQ$.}
\label{eq:thm5-linear-independence}
\end{align}
There exists no partition $\NN=\Bat\cup\Bbt\cup\Bct$  such that
$\Bat=\Ba$ is an exact Beatty sequence
and $\Bbt=\seq{\bt(n)}$ and $\Bct=\seq{\ct(n)}$ 
are almost Beatty sequences of densities
$\beta$ and $\gamma$, respectively, satisfying  $\|\bt-b\|\le 1$ and
$\|\ct-c\|\le 1$.
\end{theorem}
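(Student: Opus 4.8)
\textbf{Proof proposal for Theorem \ref{thm:thm5}.}

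The plan is to argue by contradiction: assume a partition $\NN=\Ba\cup\Bbt\cup\Bct$ exists with $\|\bt-b\|\le 1$ and $\|\ct-c\|\le 1$, and derive a contradiction from a density/counting argument localized at carefully chosen integers $N$. The key observation is that since $\Ba$, $\Bbt$, $\Bct$ partition $\NN$, their counting functions satisfy $\Ba(N)+\Bbt(N)+\Bct(N)=N$ for every $N$. Now $\Ba(N)=\fl{\alpha N}+O(1)$ exactly (indeed $\Ba(N)=\fl{\alpha(N+1)}$ or similar, since $a(n)=\fl{n/\alpha}$), and the perturbation bounds force $\Bbt(N)=\Bb(N)+O(1)$ and $\Bct(N)=\Bc(N)+O(1)$ with \emph{explicit, bounded} error terms. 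More precisely, shifting each element of a sequence by at most $1$ changes its counting function at any point by at most $1$; so $|\Bbt(N)-\Bb(N)|\le 1$ and $|\Bct(N)-\Bc(N)|\le 1$. Writing $\Bb(N)=\fl{\beta N}+e_\beta(N)$ and $\Bc(N)=\fl{\gamma N}+e_\gamma(N)$ with $e_\beta,e_\gamma\in\{0,1\}$ (a standard identity for $\fl{n/\beta}$-type Beatty sequences), the partition identity becomes a constraint of the shape $\fp{\alpha N}+\fp{\beta N}+\fp{\gamma N}=1+\text{(bounded integer correction)}$, where the correction is controlled by the various $O(1)$ terms. The point is that $\fp{\alpha N}+\fp{\beta N}+\fp{\gamma N}\in\{1,2\}$ always (since the three quantities sum to the integer $N$ minus $\fl{\alpha N}+\fl{\beta N}+\fl{\gamma N}$, and each fractional part is in $(0,1)$, irrationality excluding $0$), so this does not by itself give a contradiction — we need to extract more refined local information, not just asymptotic density.

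The heart of the argument should be a local analysis near a point where a "collision" is forced. Here is where $\alpha>1/3$ enters: if $\alpha>1/3$ then $\beta+\gamma<2/3$, so the gaps in $\Ba$ are relatively small — consecutive elements of $\Ba$ differ by $2$ or $3$ (since $1/\alpha<3$) — and this limits how much room $\Bbt$ and $\Bct$ have to avoid $\Ba$ while staying within distance $1$ of their targets. The strategy is: find a value $n$ such that the exact Beatty elements $b(n)$ and $c(m)$ (for appropriate $m$) are both forced to land on or adjacent to elements of $\Ba$, so that after perturbing each by at most $1$ there is no way to place both of them and also cover the integer they were "supposed" to cover, without either a collision between $\Bbt$ and $\Bct$ or a gap. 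Using the three-distance theorem (or direct Kronecker/equidistribution reasoning), the linear independence hypothesis \eqref{eq:thm5-linear-independence} guarantees that the triple $(\fp{\alpha n},\fp{\beta n},\fp{\gamma n})$ is equidistributed in the relevant region of the unit cube, so the "bad" local configuration — say the event that $\fp{\alpha N}$ lies in some subinterval $I$ while simultaneously $b(n),c(m)\in\{a(k)-1,a(k),a(k)+1\}$ for some $k$ — occurs for infinitely many $N$ (in fact with positive density). One then checks by finite case analysis over the few possible gap patterns of $\Ba$ (gaps $2$ and $3$, in blocks dictated by $\fp{1/\alpha}$) that in this configuration the constraints $b(n)-\bt(n)\in\{-1,0,1\}$, $c(m)-\ct(m)\in\{-1,0,1\}$, disjointness, and coverage are mutually inconsistent.

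Concretely, I would (i) record the exact counting-function identities $\Ba(N)=\fl{\alpha(N+1)-\alpha}$-style formulas and the $\{0,1\}$-valued discrepancies for $\Bb,\Bc$; (ii) translate the partition condition into the statement that for every $N$, the number of elements of $\Ba$ in $(N, N']$ is exactly balanced against the perturbed counts, pinning down which integers in a short window are covered by $\Ba$ versus by $\Bbt\cup\Bct$; (iii) use $\alpha>1/3$ to enumerate the possible local patterns of $\Ba$ in a window of length $O(1)$; (iv) invoke equidistribution of $(\fp{\alpha N},\fp{\beta N})$ on the $2$-torus (using \eqref{eq:thm5-linear-independence}) to guarantee that a particular window pattern — one in which both an element $b(n)$ and an element $c(m)$ fall within distance $1$ of the \emph{same} pair of $\Ba$-elements, leaving only one free integer to be covered by two sequences — recurs infinitely often; (v) conclude that in such a window either $\Bbt$ and $\Bct$ must share an element (contradicting "partition") or some integer is uncovered (again a contradiction). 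The main obstacle I anticipate is step (iv)–(v): making the local combinatorial dichotomy genuinely tight, i.e., pinning down the precise region $I\subset[0,1)^2$ for the fractional parts that forces the collision, and verifying that $\alpha>1/3$ (together with generic $\beta$) really does leave no escape route — the perturbation budget of $1$ in each coordinate is exactly on the boundary of what is feasible, so the case analysis must be done carefully and will likely split according to whether $\alpha\in(1/3,1/2)$ or $\alpha\in(1/2,1)$ and according to the local gap structure of $\Ba$. It is also worth noting up front that the hypothesis $\alpha>1/3$ is genuinely needed (and not merely convenient): when $\alpha<1/3$ the gaps in $\Ba$ are large enough that $\Bbt$ and $\Bct$ have room to maneuver, consistent with Theorem \ref{thm:thm4} producing error-$\le 1$ partitions in the equal-density (hence small-$\alpha$) regime.
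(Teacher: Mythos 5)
Your overall strategy coincides with the paper's: locate a local configuration around a short gap of $\Ba$ that cannot be resolved with perturbations of size at most $1$, and use two-dimensional Weyl equidistribution (Lemma \ref{lem:weyl}(ii)), justified by \eqref{eq:thm5-linear-independence}, to show that this configuration recurs infinitely often. But your write-up stops exactly where the real work is, and you say so yourself: you never pin down the forcing configuration, and you never verify that the corresponding region of fractional parts is nonempty. The paper's configuration is completely explicit: $m\in\Ba$, $m+1\in\Bb\cap\Bc$, $m+2\in\Ba$. Given this, the contradiction is immediate: the indices $n,n'$ with $b(n)=c(n')=m+1$ must satisfy $\bt(n),\ct(n')\in\{m,m+1,m+2\}$, and disjointness from $\Ba$ rules out $m$ and $m+2$, so $\bt(n)=\ct(n')=m+1$, contradicting the disjointness of $\Bbt$ and $\Bct$. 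Note that your looser formulation (``both $b(n)$ and $c(m)$ within distance $1$ of the same pair of $\Ba$-elements'') is not sufficient: if, say, $b(n)$ coincides with one of the two $\Ba$-elements rather than with the integer strictly between them, then $\bt(n)$ can escape to the outside of the window, so the precise requirement $m+1\in\Bb\cap\Bc$ is what is needed.

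The second missing piece is the compatibility check, which is not automatic and is where $\alpha>1/3$ and the relative sizes of $\beta,\gamma$ actually enter. Via Lemma \ref{lem:beatty-properties}(i) and (iv), the configuration translates into open conditions on $u_m=\fp{(m+1)\alpha}$ and $v_m=\fp{(m+1)\beta}$: namely $v_m>1-\beta$, $\fp{u_m+v_m}<1-\alpha-\beta=\gamma$, and, for $1/3<\alpha<1/2$, $1-2\alpha<u_m<\alpha$ (resp., for $\alpha>1/2$, the sufficient condition $u_m<1-\alpha$). One must exhibit a nonempty open box inside this region; the paper does so by first assuming without loss of generality $\beta\le\gamma$ (hence $\beta<\tfrac12(1-\alpha)<1/3<\alpha$ when $\alpha<1/2$) and then taking $u_m$ just below $\alpha$ (resp.\ just above $\beta$ when $\alpha>1/2$) and $v_m$ just above $1-\beta$, checking that $\fp{u_m+v_m}=u_m+v_m-1<\gamma$. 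Without this verification your steps (iv)--(v) remain an announced plan rather than a proof, as you acknowledge by calling them the ``main obstacle.'' Two minor corrections: the gaps of $\Ba$ are $2$ or $3$ only for $\alpha<1/2$ (they are $1$ or $2$ for $\alpha>1/2$, which is precisely why the paper splits into two cases), and your closing remark that Theorem \ref{thm:thm4} lives in a ``small-$\alpha$'' regime is off: equal densities allow $\alpha>1/3$, and Theorem \ref{thm:thm4} is consistent with Theorem \ref{thm:thm5} because $\alpha=\beta$ violates \eqref{eq:thm5-linear-independence}, not because $\alpha\le 1/3$.
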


\section{Lemmas}
\label{sec:lemmas}

We begin by stating, without proof, some elementary relations involving 
the floor and fractional part functions.

\begin{lemma}[Floor and fractional part function identities]
\label{lem:floor-identities}
For any real numbers $x,y$ we have
\begin{align*}
\fl{x+y}&= \fl{x} + \fl{y}  + \delta(\fp{x},\fp{y}),
\\
\fl{x-y}&= \fl{x} - \fl{y}  - \delta(\fp{x-y},\fp{y}),
\\
\fp{x+y}&= \fp{x} + \fp{y}  - \delta(\fp{x},\fp{y}),
\\
\fp{x-y}&= \fp{x} - \fp{y}  + \delta(\fp{x-y},\fp{y}),
\end{align*}
where 
\begin{equation}
\label{eq:delta-def}
\delta(s,t)=\begin{cases} 
1, &\text{if $s+t\ge 1$;}
\\
0, &\text{if $s+t< 1$.}
\end{cases}
\end{equation}
\end{lemma}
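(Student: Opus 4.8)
The plan is to reduce all four identities to the single floor-addition formula $\fl{x+y}=\fl{x}+\fl{y}+\delta(\fp{x},\fp{y})$, which I would prove first directly from the defining decomposition $x=\fl{x}+\fp{x}$, where $\fl{x}\in\ZZ$ and $\fp{x}\in[0,1)$. Writing $x=\fl{x}+\fp{x}$ and $y=\fl{y}+\fp{y}$ gives $x+y=(\fl{x}+\fl{y})+(\fp{x}+\fp{y})$, in which $\fl{x}+\fl{y}\in\ZZ$ and $\fp{x}+\fp{y}\in[0,2)$. Since adding an integer commutes with the floor, $\fl{x+y}=\fl{x}+\fl{y}+\fl{\fp{x}+\fp{y}}$; and because $\fp{x}+\fp{y}$ lies in $[0,2)$, its floor equals $1$ exactly when $\fp{x}+\fp{y}\ge 1$ and equals $0$ otherwise, which is precisely $\delta(\fp{x},\fp{y})$. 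This yields the first identity.

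The fractional-part addition identity then follows by subtracting the floor from the sum: using $\fp{x+y}=(x+y)-\fl{x+y}$ together with the decomposition of $x+y$ and the identity just proved gives $\fp{x+y}=(\fp{x}+\fp{y})-\delta(\fp{x},\fp{y})$, which is the third identity.

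For the two subtraction identities the key observation is the trivial algebraic relation $(x-y)+y=x$, which lets me apply the already-established addition identities with $x$ replaced by $x-y$ and $y$ left unchanged. The floor version then reads $\fl{x}=\fl{x-y}+\fl{y}+\delta(\fp{x-y},\fp{y})$, and rearranging gives the second identity $\fl{x-y}=\fl{x}-\fl{y}-\delta(\fp{x-y},\fp{y})$. Likewise the fractional-part version reads $\fp{x}=\fp{x-y}+\fp{y}-\delta(\fp{x-y},\fp{y})$, and rearranging gives the fourth identity $\fp{x-y}=\fp{x}-\fp{y}+\delta(\fp{x-y},\fp{y})$. Notably, no case analysis is required at this stage, so the self-referential appearance of $\fp{x-y}$ inside $\delta$ causes no difficulty.

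I expect no genuine obstacle here, as the statement is elementary. The only point requiring care is to resist proving the two subtraction formulas by a direct case split on the sign and size of $\fp{x}-\fp{y}$, which would force a somewhat awkward argument because of the occurrence of $\fp{x-y}$ as the first argument of $\delta$. Instead, exploiting the substitution $(x-y)+y=x$ makes the subtraction identities immediate consequences of the addition identities, so that the entire lemma rests on the single observation that $\fp{x}+\fp{y}\in[0,2)$.
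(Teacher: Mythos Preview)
Your proof is correct. The paper itself states this lemma \emph{without proof}, calling the identities ``elementary relations,'' so there is no argument in the paper to compare against; your reduction of all four identities to the single observation that $\fp{x}+\fp{y}\in[0,2)$, together with the substitution $(x-y)+y=x$ for the subtraction formulas, is a clean and complete justification.
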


In the following lemma we collect some elementary properties of Beatty
sequences.  We will provide proofs for the sake of completeness.  

\begin{lemma}[Elementary properties of Beatty sequences]
\label{lem:beatty-properties}
Let $\alpha\in(0,1)$ be irrational, and let 
$\Ba=\seq{a(n)}=\seq{\fl{n/\alpha}}$ be the Beatty sequence of density $\alpha$. 
\begin{itemize}
\item[(i)] \textbf{Membership criterion:}
For any $m\in\NN$ we have
\begin{equation*}
m\in\Ba
\Longleftrightarrow \fp{(m+1)\alpha}<\alpha
\Longleftrightarrow \fp{m\alpha}>1-\alpha.
\end{equation*}
\item[(ii)] \textbf{Counting function formula:} 
For any $m\in\NN$ we have
\begin{equation*}
\Ba(m)=\fl{\alpha (m+1)}=\alpha m+\alpha-\fp{\alpha (m+1)},
\end{equation*}
where $\Ba(m)$ is the counting function of $\Ba$, as defined in 
\eqref{eq:def-counting-function}.

\item[(iii)]  \textbf{Gap formula:}
Let $k=\fl{1/\alpha}$.
For any $n\in\NN$ we have 
\begin{equation*}
a(n+1)-a(n)=
\begin{cases}
k+1,&\text{if $\fp{n/\alpha}\ge 1-\fp{1/\alpha}$;}
\\
k,&\text{otherwise.}
\end{cases}
\end{equation*}

\item[(iv)]  \textbf{Gap criterion:}
Given $m\in \Ba$, let
$m'$ denote the successor to $m$ in the sequence 
$\Ba$, so that, by (iii), $m'=m+k$ or $m'=m+k+1$,
where $k=\fl{1/\alpha}$.
Then we have, for any $m\in\NN$,
\begin{align*}
m\in\Ba\text{ and } m'=m+k
&\Longleftrightarrow 
\fp{1/\alpha}\alpha< \fp{(m+1)\alpha}<\alpha,
\\
m\in\Ba\text{ and } m'=m+k+1
&\Longleftrightarrow 
\fp{(m+1)\alpha}<\fp{1/\alpha}\alpha.
\end{align*}

\end{itemize}
\end{lemma}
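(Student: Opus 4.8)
The plan is to prove Lemma \ref{lem:beatty-properties} by reducing all four parts to the single basic fact that $m\in\Ba$ iff $m=\fl{n/\aa}$ for some $n\in\NN$, and then manipulating this condition with the floor/fractional-part identities of Lemma \ref{lem:floor-identities}.

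First I would establish (i). The condition $m=\fl{n/\aa}$ for some $n$ is equivalent to the existence of an integer $n$ with $m\le n/\aa<m+1$, i.e. $m\aa\le n<(m+1)\aa$. Since $\aa$ is irrational, $m\aa$ is never an integer, so this half-open interval of length $\aa<1$ contains an integer precisely when $\fl{(m+1)\aa}>\fl{m\aa}$, which (again using irrationality, writing $(m+1)\aa=m\aa+\aa$ and applying the identity for $\fp{x+y}$) happens exactly when $\fp{m\aa}+\aa\ge 1$, i.e. $\fp{m\aa}>1-\aa$. For the other equivalent form, $\fp{(m+1)\aa}<\aa$, I note $\fp{(m+1)\aa}=\fp{m\aa}+\aa-\delta(\fp{m\aa},\aa)$, and $\fp{m\aa}>1-\aa$ forces $\delta=1$, giving $\fp{(m+1)\aa}=\fp{m\aa}+\aa-1<\aa$; conversely $\fp{(m+1)\aa}<\aa$ forces $\delta=1$ since otherwise $\fp{(m+1)\aa}=\fp{m\aa}+\aa\ge\aa$, and then $\fp{m\aa}=\fp{(m+1)\aa}+1-\aa>1-\aa$.

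Next, (ii): $\Ba(m)=\#\{n\in\NN: \fl{n/\aa}\le m\}=\#\{n\in\NN: n/\aa<m+1\}=\#\{n\in\NN: n<(m+1)\aa\}=\cl{(m+1)\aa}-1=\fl{(m+1)\aa}$, the last step using irrationality of $(m+1)\aa$. The displayed identity $\fl{\aa(m+1)}=\aa(m+1)-\fp{\aa(m+1)}=\aa m+\aa-\fp{\aa(m+1)}$ is then just the definition of the fractional part. For (iii) I would apply (ii): $a(n+1)-a(n)=\Ba(a(n+1))-\Ba(a(n)-1)$ is awkward, so instead I compute directly. With $k=\fl{1/\aa}$, write $1/\aa=k+\fp{1/\aa}$; then $a(n+1)-a(n)=\fl{n/\aa+1/\aa}-\fl{n/\aa}=k+\delta(\fp{n/\aa},\fp{1/\aa})$ by the first identity of Lemma \ref{lem:floor-identities}, and $\delta(\fp{n/\aa},\fp{1/\aa})=1$ exactly when $\fp{n/\aa}+\fp{1/\aa}\ge 1$, i.e. $\fp{n/\aa}\ge 1-\fp{1/\aa}$, which is the stated dichotomy.

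Finally (iv): given $m\in\Ba$, say $m=a(n)=\fl{n/\aa}$, the successor is $m'=a(n+1)$, so by (iii) $m'=m+k$ iff $\fp{n/\aa}<1-\fp{1/\aa}$ and $m'=m+k+1$ iff $\fp{n/\aa}\ge 1-\fp{1/\aa}$. The task is to translate the condition on $\fp{n/\aa}$ into a condition on $\fp{(m+1)\aa}$. Here the natural bridge is the ``three-distance''-type relation between $n/\aa$ and $m\aa$; concretely, from $m\aa\le n<(m+1)\aa$ and $m=\fl{n/\aa}$ one gets $n=\cl{m\aa}$ (using irrationality), hence $n-m\aa=\cl{m\aa}-m\aa=1-\fp{m\aa}$, and dividing the inequality $0<n-m\aa<\aa$ by... actually the cleaner route is: $\fp{n/\aa}=n/\aa-m$, and $n/\aa-m=(n-m\aa)/\aa=(1-\fp{m\aa})/\aa$. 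Using part (i) in the form $\fp{m\aa}=1-\aa+\fp{(m+1)\aa}$ when $m\in\Ba$ (which follows from the computation in (i)), this gives $\fp{n/\aa}=(\aa-\fp{(m+1)\aa})/\aa=1-\fp{(m+1)\aa}/\aa$. Then $\fp{n/\aa}\ge 1-\fp{1/\aa}$ becomes $\fp{(m+1)\aa}/\aa\le\fp{1/\aa}$, i.e. $\fp{(m+1)\aa}\le\fp{1/\aa}\aa$ — and since $\aa$ irrational makes equality impossible, this is $\fp{(m+1)\aa}<\fp{1/\aa}\aa$, matching the $m'=m+k+1$ case; the complementary inequality $\fp{1/\aa}\aa<\fp{(m+1)\aa}$ combined with $m\in\Ba$'s condition $\fp{(m+1)\aa}<\aa$ gives the $m'=m+k$ case.

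I expect the main obstacle to be the bookkeeping in part (iv): carefully pinning down $n=\cl{m\aa}$ and the resulting exact formula $\fp{n/\aa}=1-\fp{(m+1)\aa}/\aa$ requires using irrationality at several points to rule out boundary cases, and it is easy to be off by a sign or to conflate $\fp{m\aa}$ with $\fp{(m+1)\aa}$. Everything else is a routine application of Lemma \ref{lem:floor-identities}.
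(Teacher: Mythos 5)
Your proposal is correct, and for parts (i)--(iii) it runs along essentially the same lines as the paper: (i) is the same interval-counting argument (the paper passes directly from ``$m\alpha<n<(m+1)\alpha$ for some $n$'' to $\fp{(m+1)\alpha}<\alpha$, while you detour through $\fl{(m+1)\alpha}>\fl{m\alpha}$, which is equivalent), (ii) is the same count of indices $n$ with $n<(m+1)\alpha$ (the paper phrases it as the equivalence $\Ba(m)=n\Leftrightarrow n=\fl{(m+1)\alpha}$), and (iii) is word-for-word the same application of Lemma \ref{lem:floor-identities}. Part (iv) is where you take a genuinely different route. The paper never touches the index $n$: it observes $k\alpha=1-\fp{1/\alpha}\alpha$, rewrites ``$m\in\Ba$ and $m'=m+k$'' as ``$m\in\Ba$ and $m+k\in\Ba$'', and applies the membership criterion (i) to both $m$ and $m+k$, which yields $\fp{1/\alpha}\alpha<\fp{(m+1)\alpha}<\alpha$ in two lines. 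You instead stay on the index side, invoke the gap formula (iii) in terms of $\fp{n/\alpha}$, and translate back to the value side through the exact duality $n=\cl{m\alpha}$, hence $\fp{n/\alpha}=(1-\fp{m\alpha})/\alpha=1-\fp{(m+1)\alpha}/\alpha$ for $m=a(n)\in\Ba$; that identity is correct (your intermediate relation $\fp{m\alpha}=1-\alpha+\fp{(m+1)\alpha}$ for $m\in\Ba$ checks out), and irrationality rules out the boundary case as you note. The paper's argument is shorter and avoids the bookkeeping you flagged; yours buys an explicit formula linking the index-side and value-side fractional parts, which is of independent interest but not needed elsewhere in the paper. One small point to make explicit in your write-up of (iv): the stated equivalences are full biconditionals, so for the converse you should note that either right-hand condition already forces $\fp{(m+1)\alpha}<\alpha$ (since $\fp{1/\alpha}\alpha<\alpha$), hence $m\in\Ba$ by (i), after which your argument under the standing assumption $m\in\Ba$ applies; this is a one-line remark, and the paper is similarly terse about it.
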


\begin{remark} 
Our assumption that $\alpha$ is irrational ensures that
equality cannot hold in any of the above relations.
\end{remark}

\begin{proof}
(i) We have 
\begin{align*}
m\in\Ba&\Longleftrightarrow
m=\fl{n/\alpha}\text{ for some $n\in\NN$}
\\
&\Longleftrightarrow  m< n/\alpha<m+1\text{ for some $n\in\NN$}
\\
&\Longleftrightarrow  m\alpha < n<(m+1)\alpha
\text{ for some $n\in\NN$}
\\
&\Longleftrightarrow \fp{(m+1)\alpha}<\alpha
\\
&\Longleftrightarrow \fp{m\alpha}>1-\alpha.
\end{align*}

(ii) We have
\begin{align*}
\Ba(m)=n&\Longleftrightarrow
\fl{n/\alpha}\le m\le \fl{(n+1)/\alpha}-1
\\
&\Longleftrightarrow
n/\alpha< m+1< (n+1)/\alpha
\\
&\Longleftrightarrow
n< (m+1)\alpha<  n+1
\\
&\Longleftrightarrow 
n=\fl{(m+1)\alpha} = (m+1)\alpha-\fp{(m+1)\alpha}.
\end{align*}

(iii)
By Lemma \ref{lem:floor-identities} we have
\[
a(n+1)-a(n)=\Fl{\frac{n+1}{\alpha}}-\Fl{\frac{n}{\alpha}}
=\Fl{\frac1{\alpha}}+
\delta\left(\Fp{\frac{1}{\alpha}},\Fp{\frac{n}{\alpha}}\right),
\]
where the last term
is equal to $1$ if $\fp{1/\alpha}+\fp{n/\alpha}\ge 1$,
and $0$ otherwise.

(iv)
Note that
\[
k\alpha=\left(\frac{1}{\alpha}-\Fp{\frac{1}{\alpha}}\right)\alpha
=1-\Fp{\frac1{\alpha}}\alpha.
\]
Thus, using the results of part (i) and (iii) we have
\begin{align*}
m\in\Ba\text{ and } m'=m+k&\Longleftrightarrow
m\in\Ba\text{ and } m+k\in\Ba
\\
&\Longleftrightarrow
\fp{(m+1)\alpha}<\alpha\text{ and } 
\fp{(m+1)\alpha+k\alpha}<\alpha
\\
&\Longleftrightarrow
\fp{(m+1)\alpha}<\alpha\text{ and } 
\fp{(m+1)\alpha-\fp{1/\alpha}\alpha}<\alpha
\\
&\Longleftrightarrow
\fp{1/\alpha}\alpha<\fp{(m+1)\alpha}<\alpha.
\end{align*}
This proves the first of the asserted equivalences in (iv).  The second
equivalence,  asserting that $m\in\Ba$ and $m'=m+k+1$ holds if and only if 
$\fp{(m+1)\alpha}<\fp{1/\alpha}\alpha$, follows from this on observing
that, by (i), $m\in\Ba$ is equivalent to
$\fp{(m+1)\alpha}\in(0,\alpha)$, and by (iii) for any $m\in\Ba$, we have
either $m'=m+k$ or $m'=m+k+1$.
\end{proof}

For our next lemma we assume that $\alpha,\beta,\gamma$ are
irrational numbers satisfying \eqref{eq:abc}, so that, in particular, 
\begin{equation}
\label{eq:abc-gamma}
\gamma=1-\alpha-\beta,
\end{equation}
and we define
\begin{equation}
\label{eq:um-vm}
u_m=\{(m+1)\alpha\}, \quad 
v_m=\{(m+1)\beta\}, \quad 
w_m=\{(m+1)\gamma\}.
\end{equation}
Using \eqref{eq:abc-gamma}, the latter quantity, $w_m$, 
can be expressed in terms of $u_m$ and $v_m$:
\begin{align}
\label{eq:wm}
w_m &=\fp{(m+1)(1-\alpha-\beta)}=\fp{-(m+1)(\alpha+\beta)}
\\
\notag
&=1-\fp{(m+1)\alpha+(m+1)\beta}
=1-\fp{u_m+v_m}.
\end{align}

\begin{lemma}[Counting function identity]
\label{lem:Ba-Bb-Bc-counting-function}
Let $\alpha,\beta,\gamma$ satisfy \eqref{eq:abc}, and let $u_m$ and 
$v_m$ be as in \eqref{eq:um-vm}.
Then we have, for any $m\in\NN$,  
\begin{equation}
\label{eq:abc-sum}
\Ba(m)+\Bb(m)+\Bc(m)=m-\delta(u_m,v_m),
\end{equation}
where $\delta(s,t)\in\{0,1\}$ is given by \eqref{eq:delta-def}.
\end{lemma}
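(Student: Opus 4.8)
The plan is to compute each of the three counting functions via the formula in Lemma \ref{lem:beatty-properties}(ii) and add them up, using the density relation $\alpha+\beta+\gamma=1$ to collapse the linear-in-$m$ terms. By part (ii) of that lemma, for each $m\in\NN$ we have
\begin{align*}
\Ba(m)&=\alpha m+\alpha-u_m,\\
\Bb(m)&=\beta m+\beta-v_m,\\
\Bc(m)&=\gamma m+\gamma-w_m,
\end{align*}
where $u_m,v_m,w_m$ are the fractional parts defined in \eqref{eq:um-vm}. Adding these and using $\alpha+\beta+\gamma=1$ from \eqref{eq:abc}, the terms $(\alpha+\beta+\gamma)m$ and $(\alpha+\beta+\gamma)$ combine to give exactly $m+1$, so
\[
\Ba(m)+\Bb(m)+\Bc(m)=m+1-(u_m+v_m+w_m).
\]
Hence the lemma reduces to the purely arithmetic claim that $u_m+v_m+w_m=1+\delta(u_m,v_m)$ for every $m$.

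The key step is to evaluate $u_m+v_m+w_m$ using the expression \eqref{eq:wm} for $w_m$, which already tells us $w_m=1-\fp{u_m+v_m}$. So $u_m+v_m+w_m=1+\bigl(u_m+v_m-\fp{u_m+v_m}\bigr)=1+\fl{u_m+v_m}$. Since $u_m,v_m\in[0,1)$, we have $u_m+v_m\in[0,2)$, so $\fl{u_m+v_m}$ is $0$ if $u_m+v_m<1$ and $1$ if $u_m+v_m\ge 1$; by the definition \eqref{eq:delta-def} of $\delta$, this is exactly $\delta(u_m,v_m)$. Therefore $u_m+v_m+w_m=1+\delta(u_m,v_m)$, and substituting back gives
\[
\Ba(m)+\Bb(m)+\Bc(m)=m+1-\bigl(1+\delta(u_m,v_m)\bigr)=m-\delta(u_m,v_m),
\]
which is \eqref{eq:abc-sum}.

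There is no real obstacle here; the only point requiring a little care is the bookkeeping of the constant terms. One should note that the $+1$ arising from $\alpha+\beta+\gamma=\alpha+\beta+\gamma$ (the three ``$+\alpha$, $+\beta$, $+\gamma$'' summands) is genuinely needed, and that it is this $+1$ that cancels against the leading $1$ in $w_m=1-\fp{u_m+v_m}$ — a cancellation that makes the final answer come out as $m-\delta(u_m,v_m)$ rather than $m+1-\delta$ or $m-1-\delta$. It is also worth remarking that irrationality of $\alpha,\beta,\gamma$ guarantees $u_m,v_m,w_m\in(0,1)$ strictly and that $u_m+v_m\neq 1$, so the case distinction in $\delta$ is unambiguous, though this is not strictly needed for the identity itself. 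This completes the proof sketch.
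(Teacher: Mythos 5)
Your proof is correct and follows essentially the same route as the paper: apply Lemma \ref{lem:beatty-properties}(ii) to each of $\Ba(m)$, $\Bb(m)$, $\Bc(m)$, use $\alpha+\beta+\gamma=1$, substitute $w_m=1-\fp{u_m+v_m}$ from \eqref{eq:wm}, and finish with the observation that $u_m+v_m-\fp{u_m+v_m}=\delta(u_m,v_m)$ (the paper cites Lemma \ref{lem:floor-identities} for this, which is the same fact you express via $\fl{u_m+v_m}$). No gaps.
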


\begin{proof}
Using Lemma \ref{lem:beatty-properties}(ii) together with
\eqref{eq:abc-gamma} and \eqref{eq:wm} we obtain
\begin{align*}
\Ba(m)+\Bb(m)+\Bc(m)&=(m+1)(\alpha+\beta+\gamma)- u_m-v_m-w_m
\\
&=m -u_m-v_m+\fp{u_m+v_m}.
\end{align*}
Since $u_m,v_m\in (0,1)$, Lemma \ref{lem:floor-identities} yields 
$\fp{u_m+v_m}=u_m+v_m-\delta(u_m,v_m)$.
The desired identity now follows.
\end{proof}

The remaining lemmas in this section are deeper results which we quote from
the literature. The first of these results characterizes disjoint Beatty
sequences; see Theorem 3.11 in Niven \cite{niven-book}.

\begin{lemma}[Disjointness criterion]
\label{lem:disjointness}
Let $\alpha,\beta$ be irrational numbers in $(0,1)$. Then the
Beatty sequences $\Ba$ and $\Bb$ are disjoint if and only
if there exist positive integers $r$ and $s$ such that
\begin{equation}
\label{eq:disjointness}
 r\alpha+s\beta=1.
\end{equation}
 \end{lemma}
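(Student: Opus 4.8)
The plan is to treat the biconditional as two implications, using throughout the membership criterion of Lemma~\ref{lem:beatty-properties}(i), which states that $m\in\Ba$ iff $\fp{m\aa}>1-\aa$, and likewise for $\Bb$. Consequently $\Ba$ and $\Bb$ fail to be disjoint precisely when the orbit $m\mapsto(\fp{m\aa},\fp{m\bb})$ on the torus $(\RR/\ZZ)^2$ visits the open box $R=(1-\aa,1)\times(1-\bb,1)$ for some $m\in\NN$. I would prove the ``if'' direction by a short arithmetic argument and the ``only if'' direction, in contrapositive form, by analysing this orbit.

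For the ``if'' direction, suppose $r\aa+s\bb=1$ with $r,s\in\NN$ and assume, for contradiction, that some $m$ lies in both sequences. Writing $\theta_\aa=\cl{m\aa}-m\aa=1-\fp{m\aa}$ and $\theta_\bb=\cl{m\bb}-m\bb=1-\fp{m\bb}$ (both well defined since $\aa,\bb$ are irrational), the membership criterion gives $\theta_\aa\in(0,\aa)$ and $\theta_\bb\in(0,\bb)$. Then
\[
r\theta_\aa+s\theta_\bb=r\cl{m\aa}+s\cl{m\bb}-m(r\aa+s\bb)=r\cl{m\aa}+s\cl{m\bb}-m\in\ZZ ,
\]
yet $0<r\theta_\aa+s\theta_\bb<r\aa+s\bb=1$, which is impossible. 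Hence $\Ba\cap\Bb=\emptyset$.

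For the ``only if'' direction I argue the contrapositive: assuming there are no $r,s\in\NN$ with $r\aa+s\bb=1$, I produce an $m\in\NN$ with $(\fp{m\aa},\fp{m\bb})\in R$. Let $G$ be the closure of the subgroup generated by $(\aa,\bb)$ in $(\RR/\ZZ)^2$. By the Kronecker--Weyl theorem the forward orbit $\{m(\aa,\bb):m\in\NN\}$ is dense in $G$, so it suffices to show $G\cap R\neq\emptyset$. If $1,\aa,\bb$ are linearly independent over $\QQ$ then $G=(\RR/\ZZ)^2$ and the conclusion is immediate. Otherwise the lattice of integer relations $\{(a,b):a\aa+b\bb\in\ZZ\}$ is generated by a single primitive vector $(a_0,b_0)$ with $\gcd(a_0,b_0)=1$ and $a_0\aa+b_0\bb=t_0\in\ZZ$; since $\aa,\bb$ are irrational, $a_0$ and $b_0$ are both nonzero, and primitivity forces $G$ to be exactly the character kernel $\{(x,y):a_0x+b_0y\equiv0\ (\mathrm{mod}\ 1)\}$. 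Thus $G\cap R\neq\emptyset$ if and only if the linear form $f(x,y)=a_0x+b_0y$ attains an integer value at an interior point of $R$, i.e.\ if and only if the open interval $f(R)=(L,U)$ contains an integer in its interior.

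The heart of the matter, and the step I expect to be the main obstacle, is the bookkeeping for this last interval. Evaluating $f$ at the four corners of $R$ shows that $f(R)$ has length $|a_0|\aa+|b_0|\bb$, with endpoints the two integers $a_0+b_0$ and $a_0+b_0-t_0$ when $a_0,b_0$ have the same sign, and irrational endpoints straddling the integer $a_0+b_0$ when $a_0,b_0$ have opposite signs. Hence $f(R)$ contains an interior integer in every case except when $a_0,b_0$ share a sign and $|t_0|=1$; and that exceptional case is precisely the one admitting a solution of $r\aa+s\bb=1$ in positive integers, namely $(r,s)=(a_0,b_0)$ or $(-a_0,-b_0)$. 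Under the standing assumption this case is excluded, so $f(R)$ contains an interior integer, yielding the required point of $G\cap R$ and hence an overlap. The genuinely delicate subcase is same-sign with $|t_0|\ge2$: there the arc of $G$ through the corner does not enter $R$ locally, and one must verify that the circle $G$ re-enters the corner box $R$ elsewhere, which is exactly what the length bound $|f(R)|=|t_0|\ge2>1$ guarantees. (As the lemma is classical, it may alternatively be cited directly from Niven~\cite{niven-book}.)
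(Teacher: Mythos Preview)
Your argument is correct. The ``if'' direction is a clean arithmetic contradiction, and the ``only if'' direction via the orbit closure on the torus is carried out carefully: the linearly independent case is immediate from Weyl, and in the dependent case your analysis of when the line $\{a_0x+b_0y\equiv0\}$ meets the corner box $R$ is accurate (the same-sign case gives an interval of integer length $|t_0|$ with integer endpoints, missing an interior integer only when $|t_0|=1$; the opposite-sign case always contains the integer $a_0+b_0$). One small point worth making explicit is that density of the \emph{forward} orbit in $G$ follows from the compact-group fact that the closure of a forward semigroup is already a subgroup; you use this implicitly but it is standard.

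By contrast, the paper does not prove this lemma at all: it is introduced as one of the ``deeper results which we quote from the literature'' and is simply attributed to Theorem~3.11 of Niven~\cite{niven-book}. So your write-up goes well beyond what the paper does here. Your closing parenthetical already notes the citation alternative, which is exactly the route the paper takes.
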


The next lemma is a special case of Weyl's Theorem in the theory of uniform
distribution modulo $1$; see Examples 2.1 and 6.1 in Chapter 1 of Kuipers
and Niederreiter \cite{kuipers-niederreiter}.

\begin{lemma}[Weyl's Theorem]
\label{lem:weyl}
\mbox{}
\begin{itemize}
\item[(i)] Let $\theta$ be an irrational number. Then the sequence 
$\seq{n\theta}$ is \emph{uniformly distributed modulo $1$}; that is,  
we have
\[
\lim_{N\to\infty}
\frac1N\#\{n\le N: \fp{n\theta}<t\}=t\quad (0\le t\le 1).
\]
\item[(ii)] Let $\theta_1,\dots,\theta_k$ be real numbers such that 
the numbers $1,\theta_1,\dots,\theta_k$ are linearly independent over
$\QQ$. Then the $k$-dimensional sequence $\{(n\theta_1,\dots,n\theta_k)\}$
is \emph{uniformly distributed modulo $1$ in $\RR^k$}; that is,  
we have
\[
\lim_{N\to\infty}
\frac1N\#\{n\le N: \fp{n\theta_i}<t_i\text{ for $i=1,\dots,k$}\}
=t_1\dots t_k\quad (0\le t_i\le 1).
\]
\end{itemize}
\end{lemma}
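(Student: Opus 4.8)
The plan is to deduce both parts from the \emph{Weyl criterion}, which converts the combinatorial statement of uniform distribution into an analytic statement about exponential sums. Recall that a sequence $\seq{x_n}$ of reals is uniformly distributed modulo $1$ if and only if
\[
\lim_{N\to\infty}\frac1N\sum_{n=1}^N e^{2\pi i h x_n}=0
\quad\text{for every }h\in\ZZ\setminus\{0\},
\]
and the analogous criterion in $\RR^k$ replaces $hx_n$ by the inner product $\langle\mathbf h,\mathbf x_n\rangle$ and ranges over all $\mathbf h\in\ZZ^k\setminus\{0\}$. I would first establish this criterion and then verify the exponential-sum condition for the two specific sequences. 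To prove the criterion, one compares $\frac1N\#\{n\le N:\fp{x_n}<t\}$ with the integral $\int_0^1 f$ for continuous $1$-periodic $f$, approximating the indicator of $[0,t)$ from above and below by trigonometric polynomials (e.g.\ via the Fej\'er kernel). Since by hypothesis the average of each nonconstant character $e^{2\pi i h x_n}$ tends to $0$, the average of any trigonometric polynomial tends to its constant term, which equals its integral over $[0,1)$; sandwiching the indicator then forces convergence of the counting function to $t$, and the interval version stated in the lemma is the special case of the interval $[0,t)$.

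For part (i), I apply the one-dimensional criterion with $x_n=n\theta$. The exponential sum is a geometric progression,
\[
\sum_{n=1}^N e^{2\pi i h n\theta}
=e^{2\pi i h\theta}\,\frac{e^{2\pi i hN\theta}-1}{e^{2\pi i h\theta}-1},
\]
whose modulus is at most $2/|e^{2\pi i h\theta}-1|$. Because $\theta$ is irrational, $h\theta\notin\ZZ$ for $h\ne0$, so the denominator does not vanish and this bound is a constant independent of $N$. Dividing by $N$ and letting $N\to\infty$ yields the required limit $0$, so $\seq{n\theta}$ is uniformly distributed modulo $1$.

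For part (ii), I use the $k$-dimensional criterion with $\mathbf x_n=(n\theta_1,\dots,n\theta_k)$. For a fixed nonzero $\mathbf h=(h_1,\dots,h_k)\in\ZZ^k$, the inner product is $\langle\mathbf h,\mathbf x_n\rangle=n\vartheta$, where $\vartheta=h_1\theta_1+\dots+h_k\theta_k$. The hypothesis that $1,\theta_1,\dots,\theta_k$ are linearly independent over $\QQ$ guarantees that $\vartheta$ is irrational, since a rational value of $\vartheta$ would produce a nontrivial $\QQ$-linear relation among $1,\theta_1,\dots,\theta_k$. Thus the geometric-series estimate of part (i), applied to the single frequency $\vartheta$, shows that $\sum_{n\le N}e^{2\pi i n\vartheta}$ is bounded independently of $N$, so its average tends to $0$. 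As this holds for every nonzero $\mathbf h$, the criterion gives uniform distribution of $\seq{\mathbf x_n}$ in $\RR^k$, which is precisely the asserted limit for the boxes $\prod_{i}[0,t_i)$.

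The main obstacle is the proof of the Weyl criterion itself, specifically the forward implication (vanishing exponential sums $\Rightarrow$ uniform distribution): one must approximate the \emph{discontinuous} indicator of an interval by trigonometric polynomials, controlling the approximation error uniformly and arranging upper and lower approximants whose integrals both converge to $t$, so that the squeeze goes through. By contrast, the verification of the exponential-sum condition in both parts is an elementary geometric-series estimate once irrationality (respectively $\QQ$-linear independence) is in hand.
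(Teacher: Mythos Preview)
Your proof is correct and follows the standard route via the Weyl criterion. Note, however, that the paper does not actually prove this lemma: it is quoted from the literature with a reference to Examples~2.1 and~6.1 in Chapter~1 of Kuipers and Niederreiter, so there is no ``paper's own proof'' to compare against. Your argument is essentially the textbook proof found in that reference.
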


\section{Proof of Theorem \protect\ref{thm:thm2}}
\label{sec:proof-thm2}

Let $\alpha,\beta,\gamma$ satisfy \eqref{eq:abc} and
\eqref{eq:thm2-condition}. Thus, $\alpha,\beta,\gamma$ are positive 
irrational numbers with $\alpha+\beta+\gamma=1$ and $\beta/\gamma$ 
irrational.

Since $\alpha$ is irrational, we can apply Beatty's theorem
(Theorem A) to the pair of densities 
$(\alpha,1-\alpha)$ to obtain a partition
$\NN=\Ba\cup B_{1-\alpha}$, where  $\Ba$ is the exact Beatty
sequence in the three-part partition we are trying to construct and
\begin{equation*}
B_{1-\alpha}=\left(\Fl{n\frac{1}{1-\alpha}}\right)_{n\in\NN}.
\end{equation*}
We partition the latter sequence
by partitioning the index set $\NN$ into
two Beatty sequences with densities $\beta/(1-\alpha)$ 
and $1-\beta/(1-\alpha)=\gamma/(1-\alpha)$.
These densities sum to $1$, and our assumption that $\beta/\gamma$ is
irrational ensures that both densities are irrational.
Thus Beatty's theorem can be applied again, yielding the partition 
\begin{align*}
B_{1-\alpha}=\left(\Fl{\Fl{\frac{1-\alpha}{\beta}n}\frac{1}{1-\alpha}}
\right)_{n\in\NN}
\cup \left(\Fl{\Fl{\frac{1-\alpha}{\gamma}n}
\frac{1}{1-\alpha}}\right)_{n\in\NN}.
\end{align*}
The two sequences in this partition
are exactly the sequences $\Bbt=\seq{\bt(n)}$
and $\Bct=\seq{\ct(n)}$ defined in Theorem \ref{thm:thm2}. Thus, it remains to
show that these sequences satisfy the bounds \eqref{eq:thm2-bounds}.
By symmetry, it suffices to prove the first of these bounds, $\|\bt-b\|\le 
\fl{(2-\alpha)/(1-\alpha)}$.

Using elementary properties of the floor and fractional part functions 
we have, for all $n\in\NN$,
\begin{align*}
\bt(n)&=\Fl{\Fl{\frac{1-\alpha}{\beta}n}\frac{1}{1-\alpha}}
\le \Fl{\frac{n}{\beta}}=b(n)
\end{align*}
and
\begin{align*}
\bt(n)&
=\Fl{\frac{n}{\beta}-\Fp{\frac{1-\alpha}{\beta}n}\frac{1}{1-\alpha}}
\ge\Fl{\frac{n}{\beta}}-\Fl{\frac{1}{1-\alpha}}-1
\\
&=b(n) -\Fl{\frac{1}{1-\alpha}}-1.
\end{align*}
Hence 
\begin{equation*}
0\le b(n)-\bt(n)\le
\Fl{\frac{1}{1-\alpha}}+1=
\Fl{\frac{2-\alpha}{1-\alpha}}
\end{equation*}
for all $n\in\NN$. It follows that $\|\bt-b\|\le
\fl{(2-\alpha)/(1-\alpha)}$, which is the asserted bound
\eqref{eq:thm2-bounds} for $\|\bt-b\|$. 
This completes the proof of Theorem \ref{thm:thm2}.

\section{Proof of Theorem \protect\ref{thm:thm3}}
\label{sec:proof-thm3}

Let $\alpha,\beta,\gamma$ satisfy the conditions \eqref{eq:abc} and
\eqref{eq:thm3-condition} of Theorem \ref{thm:thm3}.
Thus, $\alpha,\beta,\gamma$ are
positive irrational numbers satisfying $\alpha+\beta+\gamma=1$ and
$\gamma>\max(\alpha,\beta)$. The latter two conditions imply 
\begin{equation}
\label{eq:thm3proof-density-condition}
\alpha<1/2,\quad \beta<1/2,\quad \gamma>1/3.
\end{equation}
Let $\Bbt=\{\bt(n)\}$
and $\Bct=\NN\setminus(\Ba\cup\Bbt)$ be the sequences defined in
Theorem \ref{thm:thm3}.

\subsection{Proof of the partition property}
We first show that the sequences
$\Bbt$ and $\Ba$ are disjoint.  Consider an element $m=\bt(n)\in\Bbt$. By
definition, we have $m=b(n)$ if $b(n)\not\in\Ba$, and $m=b(n)-1$ if
$b(n)\in\Ba$.  In the former case, we immediately get $m\not\in\Ba$, while
in the latter case we have $m+1\in\Ba$, which by Lemma
\ref{lem:beatty-properties}(iii) and the above assumption $\alpha<1/2$  
implies $m\not\in\Ba$. 
Thus the sequences $\Ba$ and $\Bbt$ are disjoint.

Since $\Bct$ is defined as the complement of the sequences $\Ba$ and
$\Bbt$, it follows that the three
sequences $\Ba$, $\Bbt$, $\Bct$ form a partition of $\NN$, as claimed.
\qed

\subsection{Proof of the bounds
\eqref{eq:thm3-bounds} and
\eqref{eq:thm3-bounds-c}}
The norm estimates \eqref{eq:thm3-bounds} obviously follow from the 
definition of $\bt(n)$ and \eqref{eq:thm3-bounds-c}, so it suffices 
to prove the latter relation, i.e., 
\begin{equation}
\label{eq:thm3proof-bounds-c}
c(n)-\ct(n)\in\{0,1,2\}\quad\text{for all $n\in\NN$.}
\end{equation}

We break up the argument into several lemmas. 
We recall the notation 
\begin{align}
\label{eq:recall-um-vm}
u_m&=\{(m+1)\alpha\}, \quad v_m=\{(m+1)\beta\}, 
\quad w_m=\{(m+1)\gamma\}=1-\fp{u_m+v_m}.
\end{align}

We set  
\begin{equation}
\label{eq:def-Egamma}
\Ebt(m)=\Bbt(m)-\Bb(m),\quad 
\Ect(m)=\Bct(m)-\Bc(m).
\end{equation}
Thus, the sequences $\Ebt$ and $\Ect$ represent the differences between
the counting functions of the almost Beatty sequences $\Bbt$ and $\Bct$
and the counting functions of the corresponding exact Beatty
sequences.  In the following two lemmas we show that these differences
are always in $\{0,1\}$, and we characterize the cases in which each of the
values $0$ and $1$ is
taken on.

\begin{lemma}
\label{lem:Ebt-formula}
We have, for all $m\in\NN$,
\begin{equation*}
\Ebt(m)= 
\begin{cases}
1,&\text{if $u_m> 1-\alpha$ and $v_m> 1-\beta$;}
\\
0, &\text{otherwise.}
\end{cases}
\end{equation*}
\end{lemma}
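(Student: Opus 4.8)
The plan is to compute the counting function $\Bbt(m)$ directly from the definition of $\Bbt$ in \eqref{eq:thm3-bt-def}. The key observation is that $\Bbt$ is obtained from $\Bb$ by shifting down by $1$ exactly those elements $b(k)$ that happen to lie in $\Ba$; no two elements are ever shifted into the same value (this was essentially checked in the disjointness argument, using $\alpha<1/2$). Hence for a fixed $m$, the quantity $\Bbt(m)-\Bb(m)$ counts the ``net flow'' across the threshold $m$: it increases by $1$ precisely when some $b(k)=m+1$ belongs to $\Ba$ (so that $\bt(k)=m$ is counted by $\Bbt(m)$ but $b(k)=m+1$ is not counted by $\Bb(m)$), and is otherwise unaffected, since an element $b(k)=m$ that gets shifted to $m-1$ is still $\le m$. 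Thus $\Ebt(m)\in\{0,1\}$, with $\Ebt(m)=1$ if and only if $m+1\in\Bb$ and $m+1\in\Ba$.

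The next step is to translate the two membership conditions ``$m+1\in\Ba$'' and ``$m+1\in\Bb$'' into conditions on the fractional parts $u_m$ and $v_m$ via the membership criterion of Lemma \ref{lem:beatty-properties}(i). That criterion gives $m+1\in\Ba \Longleftrightarrow \fp{(m+2)\alpha}<\alpha \Longleftrightarrow \fp{(m+1)\alpha}>1-\alpha$, i.e.\ $u_m>1-\alpha$; and similarly $m+1\in\Bb \Longleftrightarrow v_m>1-\beta$. (One must be a little careful about which form of the criterion to use so that the index comes out as $m+1$ with the fractional part evaluated at $(m+1)$ rather than $(m+2)$ or $m$; the ``$\fp{m\alpha}>1-\alpha$'' form applied to $m\to m+1$ does the job.) Combining, $\Ebt(m)=1$ exactly when $u_m>1-\alpha$ and $v_m>1-\beta$, which is the asserted formula.

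The main step requiring care is verifying that the shift never causes a collision or a double-counting anomaly — i.e.\ that the map from elements of $\Bb$ to elements of $\Bbt$ really is the ``identity or minus one'' bijection onto its image with no element landing on an already-occupied slot. This follows from the gap formula Lemma \ref{lem:beatty-properties}(iii): under $\alpha<1/2$ consecutive elements of $\Ba$ differ by at least $2$, so if $m+1\in\Ba$ then $m\notin\Ba$; meanwhile consecutive elements of $\Bb$ also differ by at least $2$ (since $\beta<1/2$ by \eqref{eq:thm3proof-density-condition}), so at most one element of $\Bb$ equals $m$ and at most one equals $m+1$, and a shifted element $b(k)-1$ from $b(k)=m+1$ cannot collide with an unshifted $b(k')=m$ because that would force $m=b(k')\notin\Ba$ yet also $m+1\in\Ba$, consistent, but then $b(k')=m$ would itself be shifted to $m-1$ only if $m\in\Ba$, which is false — so the shifted value $m$ and the unshifted value $m-1$ (if present) are distinct. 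Once this bookkeeping is pinned down, the counting-function identity $\Ebt(m)=\#\{k: b(k)=m+1,\ m+1\in\Ba\}\in\{0,1\}$ is immediate, and the fractional-part reformulation closes the proof.
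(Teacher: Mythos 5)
Your proof is correct and follows essentially the same route as the paper: you show $\Bbt(m)=\Bb(m)+1$ exactly when $m+1\in\Ba\cap\Bb$ (the shift causes no collision or double-counting because consecutive elements of $\Bb$ differ by at least $\fl{1/\beta}\ge 2$, since $\beta<1/2$), and then translate $m+1\in\Ba\cap\Bb$ into $u_m>1-\alpha$ and $v_m>1-\beta$ via Lemma \ref{lem:beatty-properties}(i), which is precisely the paper's argument. Your final collision discussion is more tangled than needed --- the only problematic scenario, $b(k')=m$ and $b(k)=m+1$ both in $\Bb$, is excluded immediately by that same gap bound --- but the required fact is already present in your write-up.
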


\begin{proof}
By the definition of the sequence $\Bbt=\seq{\bt(n)}$
we have $\bt(n)=b(n)-1$
if $b(n)\in\Ba\cap\Bb$, and $\bt(n)=b(n)$ otherwise.  In the first case, we
necessarily have $b(n)+1\not\in\Bb$ and thus $b(n)\not\in\Bbt$ since, by
Lemma \ref{lem:beatty-properties}(iii) and our assumption
\eqref{eq:thm3proof-density-condition}, the difference between consecutive
elements of $\Bb$ is at least $\ge \fl{1/\beta}\ge 2$.
It follows that the counting functions of $\Bbt$ and $\Bb$ satisfy 
\[
\Bbt(m)=\begin{cases}
\Bb(m)+1,&\text{if $m+1\in\Ba\cap\Bb$;}
\\
\Bb(m),&\text{otherwise.}
\end{cases}
\]
By Lemma \ref{lem:beatty-properties}(i), $m+1\in\Ba\cap\Bb$ holds if and
only if $\fp{(m+1)\alpha}>1-\alpha$ and $\fp{(m+1)\beta}>1-\beta$,
which, by the definition of the numbers $u_m$ and $v_m$, is equivalent 
to the condition $u_m>1-\alpha$ and $v_m>1-\beta$.  
The assertion of the lemma then follows on noting that 
$\Ebt(m)=\Bbt(m)-\Bb(m)$.
\end{proof}

\begin{lemma}
\label{lem:Ect-formula}
We have, for all $m\in\NN$,
\begin{equation*}
\Ect(m)=
\begin{cases}
1, &\text{if $u_m+v_m> 1\text{ and } 
\bigl(u_m<1-\alpha\text{ or } v_m<1-\beta\bigr)$;}
\\
0, &\text{otherwise.}
\end{cases}
\end{equation*}
\end{lemma}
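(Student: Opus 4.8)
The goal is to compute $\Ect(m)=\Bct(m)-\Bc(m)$, using the partition identity $\Ba(m)+\Bbt(m)+\Bct(m)=m$ (since $\Ba,\Bbt,\Bct$ partition $\NN$), the counting function identity from Lemma~\ref{lem:Ba-Bb-Bc-counting-function}, and the formula for $\Ebt(m)$ from Lemma~\ref{lem:Ebt-formula}.

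\begin{proof}
Since $\Ba,\Bbt,\Bct$ partition $\NN$, for every $m\in\NN$ we have
\begin{equation}
\label{eq:proof-ect-partition}
\Ba(m)+\Bbt(m)+\Bct(m)=m.
\end{equation}
Subtracting the counting function identity \eqref{eq:abc-sum} of Lemma \ref{lem:Ba-Bb-Bc-counting-function}, namely $\Ba(m)+\Bb(m)+\Bc(m)=m-\delta(u_m,v_m)$, from \eqref{eq:proof-ect-partition} and recalling the definitions \eqref{eq:def-Egamma} of $\Ebt$ and $\Ect$, we obtain
\begin{equation}
\label{eq:proof-ect-key}
\Ebt(m)+\Ect(m)=\delta(u_m,v_m)
\quad (m\in\NN),
\end{equation}
where $\delta(u_m,v_m)=1$ precisely when $u_m+v_m\ge 1$, and $=0$ otherwise; since $\alpha,\beta$ are irrational, $u_m+v_m\ne 1$, so $\delta(u_m,v_m)=1$ is equivalent to $u_m+v_m>1$.

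It remains to combine \eqref{eq:proof-ect-key} with the formula for $\Ebt(m)$ from Lemma \ref{lem:Ebt-formula}. There are two cases. If $u_m+v_m\le 1$ (equivalently $u_m+v_m<1$ by irrationality), then $\delta(u_m,v_m)=0$, so $\Ect(m)=-\Ebt(m)$. But $u_m+v_m<1$ forces $u_m<1$ and $v_m<1$; in fact it cannot happen that both $u_m>1-\alpha$ and $v_m>1-\beta$, since that would give $u_m+v_m>2-\alpha-\beta>1$ (using $\alpha+\beta<1$). Hence $\Ebt(m)=0$ by Lemma \ref{lem:Ebt-formula}, and therefore $\Ect(m)=0$. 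If instead $u_m+v_m>1$, then $\delta(u_m,v_m)=1$, so $\Ect(m)=1-\Ebt(m)$. By Lemma \ref{lem:Ebt-formula}, $\Ebt(m)=1$ when $u_m>1-\alpha$ and $v_m>1-\beta$, in which case $\Ect(m)=0$; and $\Ebt(m)=0$ otherwise, i.e.\ when $u_m\le 1-\alpha$ or $v_m\le 1-\beta$ (equivalently $u_m<1-\alpha$ or $v_m<1-\beta$, since $\alpha,\beta$ irrational), in which case $\Ect(m)=1$. Putting the two cases together shows that $\Ect(m)=1$ exactly when $u_m+v_m>1$ and ($u_m<1-\alpha$ or $v_m<1-\beta$), and $\Ect(m)=0$ otherwise, which is the assertion of the lemma.
\end{proof}

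\medskip
\noindent\textbf{Remark on the plan.} The approach is to extract $\Ect$ from the two partition identities available: the abstract one \eqref{eq:proof-ect-partition}, valid because $\Ba,\Bbt,\Bct$ genuinely partition $\NN$ (proved in the previous subsection), and the exact one \eqref{eq:abc-sum} for the underlying Beatty sequences. Their difference gives the clean relation \eqref{eq:proof-ect-key} tying $\Ebt+\Ect$ to $\delta(u_m,v_m)$, after which Lemma \ref{lem:Ebt-formula} does all the remaining work through a short case analysis. The only point requiring a little care --- and the place a routine argument could go wrong --- is checking the compatibility of the two regimes: one must verify that the "bad" case $u_m+v_m<1$ together with $\Ebt(m)=1$ is vacuous, which follows immediately from $\alpha+\beta<1$, so there is no genuine obstacle here; the lemma is essentially bookkeeping on top of \eqref{eq:proof-ect-key}.
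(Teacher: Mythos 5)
Your proposal is correct and follows essentially the same route as the paper: the paper likewise writes $\Bct(m)=m-\Ba(m)-\Bbt(m)$, substitutes $\Bbt(m)=\Bb(m)+\Ebt(m)$ via Lemma \ref{lem:Ebt-formula} and $\Bc(m)=m-\Ba(m)-\Bb(m)-\delta(u_m,v_m)$ via Lemma \ref{lem:Ba-Bb-Bc-counting-function}, obtaining your relation $\Ect(m)=\delta(u_m,v_m)-\Ebt(m)$, and then rules out the value $-1$ exactly as you do, from $u_m+v_m>2-\alpha-\beta>1$. The only cosmetic point is that excluding the boundary case $u_m+v_m=1$ really rests on the irrationality of $\alpha+\beta=1-\gamma$ (i.e., of $\gamma$), not of $\alpha$ and $\beta$ separately, but this does not affect the argument.
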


\begin{proof}
Using the relation $\Bct=\NN\setminus(\Ba\cup\Bbt)$ 
along with Lemma \ref{lem:Ebt-formula}, we get, for all $m\in\NN$.
\[
\Bct(m)=m-\Ba(m)-\Bbt(m)=m-\Ba(m)-\Bb(m)-\eta(u_m,v_m),
\]
where $\eta(u,v)=1$ if $u>1-\alpha$ and $v>1-\beta$, and $\eta(u,v)=0$
otherwise. On the other hand, 
Lemma \ref{lem:Ba-Bb-Bc-counting-function} yields 
\[
\Bc(m)=m-\Ba(m)-\Bb(m)-\delta(u_m,v_m),
\]
where $\delta(u,v)$ is given by \eqref{eq:delta-def}, i.e., 
$\delta(u,v)=1$ if $u+v\ge 1$, and $\delta(u,v)=0$ otherwise.
Hence, 
\[
\Ect(m)=\Bct(m)-\Bc(m)=\delta(u_m,v_m)-\eta(u_m,v_m).
\]
It follows that $\Ect(m)=1$ if and only if $\delta(u_m,v_m)=1$ and
$\eta(u_m,v_m)=0$, i.e., if and only if $u_m+v_m\ge1$ 
and $u_m< 1-\alpha$ or $v_m< 1-\beta$.   The latter conditions are
exactly the conditions in the lemma characterizing the case $\Ect(m)=1$. 

If these conditions are not satisfied, then we have either
(i) $\delta(u_m,v_m)=\eta(u_m,v_m)$, and hence $\Ect(m)=0$, or 
(ii) $\delta(u_m,v_m)=0$ and $\eta(u_m,v_m)=1$, in which case we would have 
$\Ect(m)=-1$. Thus, to complete the proof, it suffices to show that 
case (ii) is impossible.  Indeed, 
the conditions $\delta(u_m,v_m)=0$ and $\eta(u_m,v_m)=1$
are equivalent to the three inequalities 
$u_m+v_m<1$, $u_m>1-\alpha$, and $v_m> 1-\beta$ holding simultaneously. 
But the latter two inequalities imply $u_m+v_m>2-\alpha-\beta$, which
contradicts the first inequality since,
by \eqref{eq:thm3proof-density-condition},
$\alpha+\beta<1$.
\end{proof}

\begin{lemma}
\label{lem:thm3-Bct}
We have, for all $m\in\NN$,  
\begin{align*}
m\in\Bct&\Longleftrightarrow 
u_m>\alpha\text{ and } v_m>\beta\text{ and } 
\bigl( u_m<1-\alpha\text{ or } v_m<1-\beta\bigr).
\end{align*}
\end{lemma}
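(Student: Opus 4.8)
The plan is to express membership of $m$ in $\Bct$ purely in terms of the fractional parts $u_m$ and $v_m$, so that the three possible perturbation values can be read off by comparing $\ct(n)$ with $c(n)$ via counting functions. First I would observe that, since $\Bct=\NN\setminus(\Ba\cup\Bbt)$, we have $m\in\Bct$ precisely when $m\notin\Ba$ and $m\notin\Bbt$. The condition $m\notin\Ba$ is, by Lemma \ref{lem:beatty-properties}(i), equivalent to $u_{m-1}\le 1-\alpha$, or better, working with the ``successor'' form, to $\fp{m\alpha}\le 1-\alpha$; I would instead phrase everything in terms of $u_m=\fp{(m+1)\alpha}$ by using the equivalence $m\in\Ba\Leftrightarrow u_m<\alpha$ applied to $m$ itself, giving $m\notin\Ba\Leftrightarrow u_m>\alpha$. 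Similarly $m\notin\Bb\Leftrightarrow v_m>\beta$. The subtlety is translating $m\notin\Bbt$: an element lands in $\Bbt$ either as some $b(n)$ with $b(n)\notin\Ba$, or as $b(n)-1$ with $b(n)\in\Ba\cap\Bb$. So $m\in\Bbt$ iff ($m\in\Bb$ and $m\notin\Ba$) or ($m+1\in\Ba\cap\Bb$).

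Next I would unwind the negation: $m\notin\Bbt$ iff NOT$(m\in\Bb$ and $m\notin\Ba)$ AND NOT$(m+1\in\Ba\cap\Bb)$. Combining with $m\notin\Ba$ (which we already need) and $m\notin\Bb$: once we know $m\notin\Ba$, the first clause NOT$(m\in\Bb \wedge m\notin\Ba)$ is automatically equivalent to $m\notin\Bb$, i.e.\ $v_m>\beta$. The second clause, NOT$(m+1\in\Ba\cap\Bb)$, is by Lemma \ref{lem:beatty-properties}(i) the negation of ``$\fp{(m+1)\alpha}>1-\alpha$ and $\fp{(m+1)\beta}>1-\beta$'', i.e.\ $u_m<1-\alpha$ or $v_m<1-\beta$. (Here irrationality rules out equality, so strict inequalities are fine.) Assembling the three surviving conditions gives exactly
\[
m\in\Bct \Longleftrightarrow u_m>\alpha \ \text{and}\ v_m>\beta \ \text{and}\ \bigl(u_m<1-\alpha \ \text{or}\ v_m<1-\beta\bigr),
\]
which is the claim.

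The main point requiring care — and the step I expect to be the only real obstacle — is the bookkeeping around the shifted elements: I must be sure that when $b(n)\in\Ba\cap\Bb$ and we set $\bt(n)=b(n)-1$, the value $b(n)-1$ is not itself already in $\Ba$ or equal to some other $\bt$-value, since otherwise the rewriting of $m\notin\Bbt$ above would be invalid. Both facts follow from \eqref{eq:thm3proof-density-condition}: $\alpha<1/2$ gives gaps in $\Ba$ of size $\ge 2$ (Lemma \ref{lem:beatty-properties}(iii)), so $b(n)\in\Ba$ forces $b(n)-1\notin\Ba$; and $\beta<1/2$ gives gaps in $\Bb$ of size $\ge 2$, so $b(n)-1$ cannot coincide with a neighboring unshifted element of $\Bb$. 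These are exactly the facts already invoked in the proof of the partition property and of Lemma \ref{lem:Ebt-formula}, so I would simply cite them. Everything else is a routine application of the membership criterion in Lemma \ref{lem:beatty-properties}(i), and I would present the argument as a short chain of equivalences rather than a case analysis.
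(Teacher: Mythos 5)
Your argument is correct and takes essentially the same route as the paper's proof: both describe $\Bbt$ as the set $\{m\in\NN: m\in\Bb\setminus\Ba \text{ or } m+1\in\Bb\cap\Ba\}$, unwind $m\in\Bct$ as $m\notin\Ba$ and $m\notin\Bbt$, and translate the resulting conditions $m\notin\Ba$, $m\notin\Bb$, $m+1\notin\Ba\cap\Bb$ via Lemma \ref{lem:beatty-properties}(i) into the stated inequalities for $u_m$ and $v_m$. Your extra precaution about shifted values colliding with $\Ba$ or other elements of $\Bb$ is true (and is what the paper uses for the partition property), but it is not actually needed for this lemma, since the set-level description of $\Bbt$ used in the complement computation holds regardless.
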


\begin{proof}
By construction, we have $\Bct=\NN\setminus(\Ba\cup \Bbt)$, 
with
\[
\Bbt=\{m\in\NN: m\in \Bb\setminus\Ba \text{ or }m+1\in\Bb\cap \Ba \}.
\]
Therefore, using Lemma \ref{lem:beatty-properties}(i), we get 
\begin{align*}
m\in\Bct&\Longleftrightarrow m\not\in \Ba\text{ and } m\not\in \Bbt
\\
&\Longleftrightarrow m\not\in \Ba\text{ and } 
m\not\in \Bb\setminus\Ba\text{ and } m+1\not\in\Bb\cap\Ba
\\
&\Longleftrightarrow u_m>\alpha \text{ and } 
v_m>\beta \text{ and } 
\bigl( u_m<1-\alpha\text{ or } v_m<1-\beta\bigr).
\qedhere
\end{align*}
\end{proof}

\begin{lemma}
\label{lem:thm3-cn-1-ctn-cn}
We have, for  all $n\in\NN$,
\begin{equation*}
c(n-1)\le \ct(n)\le c(n).
\end{equation*}
\end{lemma}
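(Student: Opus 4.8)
The plan is to deduce the double inequality $c(n-1)\le \ct(n)\le c(n)$ from the
fact that the counting function error $\Ect(m)=\Bct(m)-\Bc(m)$ lies in $\{0,1\}$,
which is exactly the content of Lemma \ref{lem:Ect-formula}. The bridge between
a counting function and the elements of a sequence is the standard duality: for
a strictly increasing integer sequence $X=\seq{x(n)}$ one has, for $n\in\NN$ and
$m\in\NN_0$, the equivalence $x(n)\le m \iff X(m)\ge n$. Applying this to both
$\Bct$ and $\Bc$ turns statements about $\Bct(m)$ versus $\Bc(m)$ into statements
about $\ct(n)$ versus $c(n)$.

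First I would establish the upper bound $\ct(n)\le c(n)$. Set $m=c(n)$. Since
$c(n)\in\Bc$, we have $\Bc(c(n))=n$, hence $\Bct(c(n))=\Bc(c(n))+\Ect(c(n))\ge n$
because $\Ect\ge 0$. By the duality above, $\Bct(c(n))\ge n$ is equivalent to
$\ct(n)\le c(n)$, which is the desired upper bound. Next I would establish the
lower bound $\ct(n)\ge c(n-1)$, where we use the convention $c(0)=0$ so that the
inequality is trivial for $n=1$; for $n\ge 2$ put $m=c(n-1)$. Then $\Bc(c(n-1))=n-1$,
so $\Bct(c(n-1))=(n-1)+\Ect(c(n-1))\le (n-1)+1=n$, using $\Ect\le 1$. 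Now I need
to pass from $\Bct(c(n-1))\le n$ back to an inequality on $\ct$. The cleanest way
is to observe that $\Bct(c(n-1))\le n$ together with the duality $\ct(n+1)\le m
\iff \Bct(m)\ge n+1$ gives $\ct(n+1) > c(n-1)$; but that is an inequality at index
$n+1$, not $n$. To get the index right, note instead that $\Bct(c(n-1))\le n$
means at most $n$ elements of $\Bct$ are $\le c(n-1)$, hence the $(n+1)$-st element
$\ct(n+1)$ exceeds $c(n-1)$; shifting, the $n$-th element satisfies
$\ct(n)\ge c(n-2)$ — again the wrong index. The correct normalization is to use
$\Ect(m)\le 1$ in the form: the number of elements of $\Bct$ that are $\le c(n-1)$
is $\Bct(c(n-1)) = (n-1) + \Ect(c(n-1)) \le n$, so in particular $\ct(n+1) > c(n-1)$,
i.e. $\ct(n+1)\ge c(n-1)+1 > c(n-1)$; equivalently, for every $k$, $\ct(k)\ge c(k-2)+1$.
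I will instead run the argument directly on $\ct(n)$: suppose for contradiction
$\ct(n)\le c(n-1)-1$; then all of $\ct(1),\dots,\ct(n)$ are $\le c(n-1)-1 \le c(n-1)$,
so $\Bct(c(n-1))\ge n$, contradicting $\Bct(c(n-1)) = (n-1)+\Ect(c(n-1)) \le n$
unless $\Ect(c(n-1))=1$ and $\Bct(c(n-1)) = n$ exactly — in which case $\ct(n)=c(n-1)$,
not $\le c(n-1)-1$. So $\ct(n)\ge c(n-1)$ after all.

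The main obstacle is purely bookkeeping: getting the index offsets right when
translating between the counting function bound $0\le \Ect\le 1$ and the element
bound. I expect the cleanest write-up to avoid contradiction arguments entirely
and instead use the two dual identities $\Bct(m)\ge n \iff \ct(n)\le m$ and
$\Bct(m)<n \iff \ct(n)>m$ directly: the upper bound comes from
$\Bct(c(n))\ge \Bc(c(n)) = n$, and the lower bound from
$\Bct(c(n-1)) = \Bc(c(n-1)) + \Ect(c(n-1)) \le (n-1)+1 = n$, hence
$\Bct(c(n-1)) < n+1$ — no, I need $\Bct(c(n-1)-1) < n$, which follows since
$\Bct(c(n-1)-1)\le \Bct(c(n-1)) \le n$ and if equality held throughout then
$c(n-1)\notin\Bct$, giving $\Bct(c(n-1)-1)=n$, i.e. $\ct(n)\le c(n-1)-1$,
which is compatible with $\ct(n)\ge c(n-1)$ failing. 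I will settle the edge case
by noting that whether or not $c(n-1)\in\Bct$, the inequality $\Bct(c(n-1))\le n$
gives $\ct(n)\ge c(n-1)-\big(n - \Bct(c(n-1))\big)$-type control that collapses to
$\ct(n)\ge c(n-1)$ once one checks the one genuinely delicate case, $\Ect(c(n-1))=0$
and $c(n-1)\in\Bct$, using Lemma \ref{lem:thm3-Bct} to confirm consistency. Everything
else is Lemma \ref{lem:Ect-formula} plus the counting-function/element duality, with
the $n=1$ case handled by the convention $c(0)=0$.
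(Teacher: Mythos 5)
Your upper bound argument is correct and is essentially the paper's: from $\Bc(c(n))=n$ and $\Ect(c(n))\ge 0$ (Lemma \ref{lem:Ect-formula}) you get $\Bct(c(n))\ge n$, hence $\ct(n)\le c(n)$. The lower bound, however, has a genuine gap. You evaluate the counting functions at $m=c(n-1)$, where the only available bound is $\Bct(c(n-1))=(n-1)+\Ect(c(n-1))\le n$; as you yourself notice, this is compatible with $\Bct(c(n-1)-1)=n$, i.e., with $\ct(n)\le c(n-1)-1$, in the case $\Ect(c(n-1))=1$ and $c(n-1)\notin\Bct$. Your attempted patch --- ``in which case $\ct(n)=c(n-1)$'' --- is a non sequitur: $\Bct(c(n-1))=n$ only says that the $n$-th element of $\Bct$ is $\le c(n-1)$, not that it equals $c(n-1)$. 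The final paragraph does not repair this either: the genuinely delicate case is $\Ect(c(n-1))=1$, not ``$\Ect(c(n-1))=0$ and $c(n-1)\in\Bct$'', and ``using Lemma \ref{lem:thm3-Bct} to confirm consistency'' is not an argument.

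The missing step is to evaluate one position to the left, at $m=c(n-1)-1$ (for $n\ge 2$; the case $n=1$ is trivial with the convention $c(0)=0$). Since $c(n-1)\in\Bc$, we have $\Bc(c(n-1)-1)=n-2$, so Lemma \ref{lem:Ect-formula} gives $\Bct(c(n-1)-1)=(n-2)+\Ect(c(n-1)-1)\le n-1$. If $\ct(n)\le c(n-1)-1$, then $\Bct(c(n-1)-1)\ge \Bct(\ct(n))=n$, a contradiction; hence $\ct(n)\ge c(n-1)$. This is exactly how the paper closes the lower bound, and it is the one move your write-up circles around but never makes.
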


\begin{proof}
Note that $\Bct(\ct(n))=n$ and $\Bct(\ct(n)-1)=n-1$.  
Thus, if $\ct(n)\ge c(n)+1$, then, using 
the monotonicity of the counting function $\Bct(m)$ and 
Lemma \ref{lem:Ect-formula}, we have 
\begin{align*}
n-1&=\Bct(\ct(n)-1))\ge \Bct(c(n))\\
&=\Bc(c(n))+\Ect(c(n))
\\
&=n+\Ect(c(n))\ge n,
\end{align*}
which is a contradiction.  This proves the desired upper bound,
$\ct(n)\le c(n)$.

Similarly, if $\ct(n)\le c(n-1)-1$, then, 
using the relation $\Bc(c(n-1)-1)=n-2$, we get
\begin{align*}
n&=\Bct(\ct(n))\le \Bct(c(n-1)-1)
\\
&=\Bc(c(n-1)-1)+
\Ect(c(n-1)-1)
\\
&=n-2+ \Ect(c(n-1)-1)\le n-1,
\end{align*}
which is again a contradiction, thus proving
the lower bound $\ct(n)\ge c(n-1)$. 
\end{proof}

\begin{lemma}
\label{lem:thm3-ct-c}
We have, for all $n\in\NN$, 
\begin{equation*}
c(n)-2\le \ct(n)\le c(n).
\end{equation*}
\end{lemma}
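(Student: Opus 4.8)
\textbf{Proof plan for Lemma \ref{lem:thm3-ct-c}.}

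The plan is to keep the upper bound $\ct(n)\le c(n)$ from Lemma \ref{lem:thm3-cn-1-ctn-cn} and upgrade its lower bound $\ct(n)\ge c(n-1)$ to $\ct(n)\ge c(n)-2$. The first step is to observe that $\Bc$ has small gaps: by \eqref{eq:thm3proof-density-condition} we have $\gamma>1/3$, so $k:=\fl{1/\gamma}\le 2$, and hence the gap formula (Lemma \ref{lem:beatty-properties}(iii)) gives $c(n)-c(n-1)\le k+1\le 3$ for every $n$. Combining this with $\ct(n)\ge c(n-1)$ already yields $c(n)-\ct(n)\le c(n)-c(n-1)\le 3$, so the whole problem collapses to excluding the single borderline value $c(n)-\ct(n)=3$.

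The second step is to assume, for contradiction, that $c(n)-\ct(n)=3$ for some $n$, and to derive a contradiction. Since $c(n-1)\le\ct(n)$ and $c(n)-c(n-1)\le 3$, this forces the equalities $c(n)-c(n-1)=3$ and $\ct(n)=c(n-1)$; moreover a gap of $3$ forces $k=2$, hence $\gamma\in(1/3,1/2)$, so in particular $\gamma<1/2$ in this case. Write $m_0=c(n-1)$. From $\ct(n)=m_0=c(n-1)$ and the definitions of the counting functions I get $\Ect(m_0)=\Bct(m_0)-\Bc(m_0)=n-(n-1)=1$, so Lemma \ref{lem:Ect-formula} gives $u_{m_0}+v_{m_0}>1$ together with $u_{m_0}<1-\alpha$ or $v_{m_0}<1-\beta$. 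On the other hand, $m_0\in\Bc$ gives $w_{m_0}<\gamma$, and since the gap is $3$ we also have $m_0+2\notin\Bc$; noting that if $w_{m_0}\ge 1-2\gamma$ then (because $\gamma<1/2$) one computes $w_{m_0+2}=\fp{w_{m_0}+2\gamma}<\gamma$, which would put $m_0+2\in\Bc$, a contradiction, we conclude $w_{m_0}<1-2\gamma$. By \eqref{eq:wm} this means $\fp{u_{m_0}+v_{m_0}}>2\gamma$, and since $u_{m_0}+v_{m_0}>1$ we may drop the fractional part to obtain $u_{m_0}+v_{m_0}>1+2\gamma$. In the subcase $u_{m_0}<1-\alpha$ this forces $v_{m_0}>1+2\gamma-(1-\alpha)=2\gamma+\alpha=1+\gamma-\beta>1$ (using $\alpha+\beta+\gamma=1$ and $\gamma>\beta$), contradicting $v_{m_0}<1$; the subcase $v_{m_0}<1-\beta$ is symmetric and uses $\gamma>\alpha$. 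This contradiction gives $c(n)-\ct(n)\le 2$ for all $n$, which together with Lemma \ref{lem:thm3-cn-1-ctn-cn} is exactly the claim.

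The routine parts are the gap bound, the counting-function identity for $\Ect$, and the arithmetic with the density relation $\gamma>\max(\alpha,\beta)$. I expect the only real subtlety to be the organization of the borderline case: recognizing that in the gap-$3$ situation the two-sided bound of Lemma \ref{lem:thm3-cn-1-ctn-cn} pins $\ct(n)$ down to $c(n-1)$ exactly, correctly extracting $w_{m_0}<1-2\gamma$ from $m_0\in\Bc$ and $m_0+2\notin\Bc$ (handling the modular wrap-around of $w_{m_0}+2\gamma$), and then combining this with $\Ect(m_0)=1$ so that $\gamma>\max(\alpha,\beta)$ produces the impossible inequality $v_{m_0}>1$ (resp.\ $u_{m_0}>1$).
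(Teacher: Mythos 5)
Your proof is correct and follows essentially the same route as the paper: combine the two-sided bound $c(n-1)\le\ct(n)\le c(n)$ with the gap bound for $\Bc$, reduce to excluding the borderline case $\ct(n)=c(n-1)=c(n)-3$ when $1/3<\gamma<1/2$, and rule it out by playing $\Ect(m)=1$ (Lemma \ref{lem:Ect-formula}) against the inequality $w_m<1-2\gamma$, using $\alpha+\beta+\gamma=1$ and $\gamma>\max(\alpha,\beta)$. The only cosmetic difference is that you re-derive the condition $w_{m_0}<1-2\gamma$ directly from $m_0\in\Bc$, $m_0+2\notin\Bc$, whereas the paper cites the gap criterion of Lemma \ref{lem:beatty-properties}(iv); the final arithmetic contradiction is the same in substance.
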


\begin{proof} 
Note that $\gamma>1/3$ by our
assumption \eqref{eq:thm3proof-density-condition}. 
Hence, Lemma \ref{lem:beatty-properties}(iii) yields 
\[
c(n-1)\ge c(n)-\fl{1/\gamma}-1\ge c(n)-3,
\]
and combining this with Lemma \ref{lem:thm3-cn-1-ctn-cn} we obtain
\begin{equation}
\label{eq:thm3proof-cn-3a}
c(n)-3\le c(n-1)\le \ct(n)\le c(n).
\end{equation}
If $\gamma>1/2$, then the same argument yields the stronger bound 
\[
c(n)-2\le c(n-1)\le \ct(n)\le c(n),
\]
which proves the asserted inequality.
Thus, we may assume 
\begin{equation}
\label{eq:thm3proof-gamma-bound}
\frac13<\gamma<\frac12,
\end{equation}
and it remains to show that in this case we cannot have $c(n)-3=\ct(n)$. 

We argue by contradiction. Suppose that $c(n)-3=\ct(n)$. In view of 
\eqref{eq:thm3proof-cn-3a}, this forces the double equality
\begin{equation}
\label{eq:thm3proof-cn-3}
\ct(n)=c(n-1)=c(n)-3.
\end{equation}
Our assumption \eqref{eq:thm3proof-gamma-bound} then implies
$3=\fl{1/\gamma}+1$. Thus,
$3$ is the larger of the two possible gaps in the
Beatty sequence $\Bc$, and by Lemma \ref{lem:beatty-properties}(iv) 
it follows that $\fp{(m+1)\gamma}<\fp{1/\gamma}\gamma$, where
$m=c(n-1)$.  Since (see \eqref{eq:um-vm} and \eqref{eq:wm})
\[
\fp{(m+1)\gamma}=w_m=1-\fp{u_m+v_m}, 
\]
the latter condition is equivalent to 
\begin{equation}
\label{eq:thm3proof-cn-3b}
\fp{u_m+v_m}>1-\fp{1/\gamma}\gamma.
\end{equation}

On the other hand, our assumption \eqref{eq:thm3proof-cn-3} implies
$\Bc(m)=\Bc(c(n-1))=n-1$, $\Bct(m)=\Bct(\ct(n))=n$, and hence $\Ect(m)=1$.
By Lemma \ref{lem:Ect-formula} the latter condition holds if and only if 
$u_m+v_m>1$  and  at least one of the inequalities 
$u_m<1-\alpha$ and $v_m<1-\beta$ holds. 
It follows that $\fp{u_m+v_m}=u_m+v_m-1$ and hence
\begin{equation}
\label{eq:thm3proof-cn-3c}
\fp{u_m+v_m}=u_m+v_m-1<1-\min(\alpha,\beta).
\end{equation}

Comparing \eqref{eq:thm3proof-cn-3b}
with \eqref{eq:thm3proof-cn-3c} yields
\[
\min(\alpha,\beta)<\fp{1/\gamma}\gamma=1-2\gamma,
\]
where the last step follows since, 
by our assumption $1/3<\gamma<1/2$, $\fp{1/\gamma}=(1/\gamma)-2$.
But the latter condition implies $2\gamma+\min(\alpha,\beta)<1$, 
which contradicts the assumptions $\alpha+\beta+\gamma=1$ and
$\gamma>\max(\alpha,\beta)$ of Theorem \ref{thm:thm3}.
Hence, the case $c(n)-3=\ct(n)$ is
impossible.

This completes the proof of Lemma \ref{lem:thm3-ct-c}.
\end{proof}

\subsection{Distribution of perturbation errors}
Lemma \ref{lem:thm3-ct-c} implies the asserted relation
\eqref{eq:thm3proof-bounds-c} and thus completes the proof of 
Theorem \ref{thm:thm3}.
In the remainder of this section we study the distribution 
of the  perturbation errors $c(n)-\ct(n)$ more closely.

\begin{lemma}
\label{lem:thm3-perturbation-errors}
Given $n\in\NN$, let $m=\ct(n)$ (so that, in particular, $m\in\Bct$). Then
\begin{equation*}
c(n)-\ct(n)=
\begin{cases}
0&\Longleftrightarrow m\in\Bct \text{ and } \Ect(m)=0;
\\
1&\Longleftrightarrow
\Ect(m)=1\text{ and } m\in\Bct \text{ and } m+1\in\Bc;
\\
2&\Longleftrightarrow
\Ect(m)=1\text{ and } m\in\Bct \text{ and } m+1\not\in\Bc.
\end{cases}
\end{equation*}
\end{lemma}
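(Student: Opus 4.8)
The plan is to characterize, for each $n\in\NN$, the value of $c(n)-\ct(n)$ by analyzing the relationship between the counting functions $\Bc$ and $\Bct$ near the point $m=\ct(n)$. Recall from Lemma \ref{lem:thm3-ct-c} that $c(n)-\ct(n)\in\{0,1,2\}$, and from the proof of that lemma that $c(n-1)\le\ct(n)\le c(n)$ (Lemma \ref{lem:thm3-cn-1-ctn-cn}), so $m$ lies in the half-open block of integers $(c(n-1),c(n)]$ of the Beatty sequence $\Bc$. The key bookkeeping identities are $\Bct(m)=n$ (since $m=\ct(n)$), $\Bc(c(n))=n$, $\Bc(c(n)-1)=\Bc(c(n-1))=n-1$, together with $\Ect(m)=\Bct(m)-\Bc(m)\in\{0,1\}$ by Lemma \ref{lem:Ect-formula}.

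First I would settle the case $c(n)-\ct(n)=0$, i.e., $m=c(n)$. If $m=c(n)$, then $\Bc(m)=n=\Bct(m)$, so $\Ect(m)=0$; conversely, if $\Ect(m)=0$ then $\Bc(m)=\Bct(m)=n$, which forces $m\ge c(n)$, and combined with $m\le c(n)$ this gives $m=c(n)$. Since $m\in\Bct$ automatically (as $m=\ct(n)$), this yields the first equivalence. Next, for $c(n)-\ct(n)\in\{1,2\}$ we have $m<c(n)$, hence (by the block containment) $m\in\{c(n)-1,c(n)-2\}$ and $\Bc(m)=n-1$, so $\Ect(m)=\Bct(m)-\Bc(m)=n-(n-1)=1$. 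It remains to distinguish the two sub-cases by whether $m+1\in\Bc$. If $c(n)-\ct(n)=1$ then $m+1=c(n)\in\Bc$; if $c(n)-\ct(n)=2$ then $m=c(n)-2$, and since $c(n-1)\le m=c(n)-2$ we have $c(n)-1\notin\Bc$ (the only elements of $\Bc$ in this range are $c(n-1)$ and $c(n)$), i.e., $m+1\notin\Bc$. The converse directions follow by reversing these implications: given $\Ect(m)=1$ (so $m\in\{c(n)-1,c(n)-2\}$) together with $m+1\in\Bc$, the only possibility is $m+1=c(n)$, i.e., $m=c(n)-1$; given $\Ect(m)=1$ with $m+1\notin\Bc$, we must have $m=c(n)-2$.

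The main obstacle—though it is a mild one—is verifying cleanly that when $\Ect(m)=1$ the point $m$ is forced into the set $\{c(n)-1, c(n)-2\}$ and not merely $\{c(n)-1,c(n)-2,c(n)-3,\dots\}$; this is exactly where one invokes Lemma \ref{lem:thm3-ct-c} (which rules out $c(n)-\ct(n)=3$) and the gap structure of $\Bc$ under the hypothesis $\gamma>1/3$ from \eqref{eq:thm3proof-density-condition}, so that consecutive elements of $\Bc$ differ by at most $3$ and thus $c(n-1)\in\{c(n)-2,c(n)-3\}$. Once that is in hand, the case analysis is a routine comparison of counting-function values, and the three stated equivalences drop out. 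I would present the argument in the order above: the $\Ect(m)=0$ case, then the reduction of the remaining cases to $\Ect(m)=1$, then the $m+1\in\Bc$ dichotomy, checking both directions of each equivalence.
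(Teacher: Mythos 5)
Your proposal is correct and follows essentially the same route as the paper: pin down $m=\ct(n)$ relative to $c(n)$ via Lemmas \ref{lem:thm3-cn-1-ctn-cn} and \ref{lem:thm3-ct-c}, compare the counting functions $\Bct(m)=n$ and $\Bc(m)$ to identify $\Ect(m)$, and separate the error values $1$ and $2$ by whether $m+1\in\Bc$. The only blemish is the side remark that $c(n-1)\in\{c(n)-2,c(n)-3\}$, which need not hold (when $\gamma>1/2$ one can have $c(n-1)=c(n)-1$), but your argument never actually uses it, so the proof stands as written.
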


\begin{proof}
Let $m=\ct(n)$ and $m'=c(n)$, so that $\Bct(m)=n$, $\Bc(m')=n$.
 By Lemma \ref{lem:thm3-ct-c}
we have $m'\in\{m,m+1,m+2\}$. 

If $m'=m$, then $\Bct(m)=n=\Bc(m)$, so
$\Ect(m)=\Bct(m)-\Bc(m)=0$. On the other hand, 
if $m'\ge m+1$, then 
\[
\Bc(m)\le \Bc(m'-1)=\Bc(c(n)-1)=n-1=\Bct(m)-1
\]
and hence $\Ect(m)=1$. 
Thus, $m'=m$ holds if and only if $\Ect(m)=0$, while 
$m'\in\{m+1,m+2\}$ holds if and only if $\Ect(m)=1$.
In the latter case, if $m+1\in\Bc$, then $m'=m+1$, 
while if $m+1\not\in\Bc$, then $m'>m+1$, and thus necessarily $m'=m+2$.  
This yields the desired characterization of the values 
$c(n)-\ct(n)$ (i.e., $m'-m$).
\end{proof}

\begin{proposition}[Characterization of perturbation errors]
\label{prop:thm3-perturbation-errors}
Under the assumptions of Theorem \ref{thm:thm3} we have, for any $n\in\NN$, with
$m=\ct(n)$,  
\begin{equation*}
c(n)-\ct(n)=
\begin{cases}
0&\Longleftrightarrow 
u_m>\alpha \text{ and } v_m> \beta \text{ and } u_m+v_m<1;
\\
1&\Longleftrightarrow
u_m>\alpha \text{ and } v_m> \beta \text{ and } 1<u_m+v_m<1+\gamma;
\\
2&\Longleftrightarrow
\bigl(u_m<1-\alpha\text{ or }v_m<1-\beta\bigr) \text{ and }
u_m+v_m>1+\gamma.
\end{cases}
\end{equation*}
\end{proposition}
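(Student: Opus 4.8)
The plan is to combine the characterization of membership in $\Bct$ from Lemma \ref{lem:thm3-Bct}, the formula for $\Ect(m)$ from Lemma \ref{lem:Ect-formula}, the membership criterion for $\Bc$ from Lemma \ref{lem:beatty-properties}(i), and the case split from Lemma \ref{lem:thm3-perturbation-errors}, translating every condition into inequalities on $u_m$ and $v_m$. Since $m=\ct(n)$, we always have $m\in\Bct$, so by Lemma \ref{lem:thm3-Bct} the baseline conditions $u_m>\alpha$ and $v_m>\beta$ and $(u_m<1-\alpha \text{ or } v_m<1-\beta)$ are in force throughout; these will be the ``ambient'' hypotheses on top of which the three cases are distinguished.

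First I would handle the case $c(n)-\ct(n)=0$. By Lemma \ref{lem:thm3-perturbation-errors} this is equivalent to $\Ect(m)=0$. Given the ambient condition that at least one of $u_m<1-\alpha$, $v_m<1-\beta$ holds, Lemma \ref{lem:Ect-formula} shows $\Ect(m)=0$ is then equivalent to $u_m+v_m\le 1$, i.e., $u_m+v_m<1$ by irrationality. Together with the ambient conditions $u_m>\alpha$, $v_m>\beta$, this gives exactly the stated characterization; note the ``or'' clause from Lemma \ref{lem:thm3-Bct} is automatically satisfied once $u_m+v_m<1$, since $u_m+v_m<1$ forces $u_m<1-v_m<1$ — actually one must check it forces $u_m<1-\alpha$ or $v_m<1-\beta$: indeed if both $u_m\ge 1-\alpha$ and $v_m\ge 1-\beta$ then $u_m+v_m\ge 2-\alpha-\beta>1$ since $\alpha+\beta<1$ by \eqref{eq:thm3proof-density-condition}, contradiction. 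So the ``or'' clause is redundant in this case and can be dropped, matching the proposition.

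Next, for $c(n)-\ct(n)\in\{1,2\}$, Lemma \ref{lem:thm3-perturbation-errors} says both require $\Ect(m)=1$, which by Lemma \ref{lem:Ect-formula} means $u_m+v_m>1$ and ($u_m<1-\alpha$ or $v_m<1-\beta$); the distinction between the two cases is whether $m+1\in\Bc$. By Lemma \ref{lem:beatty-properties}(i), $m+1\in\Bc$ iff $\fp{(m+2)\gamma}<\gamma$, equivalently $\fp{(m+1)\gamma}>1-\gamma$, i.e., $w_m>1-\gamma$. Using $w_m=1-\fp{u_m+v_m}$ and $\fp{u_m+v_m}=u_m+v_m-1$ (valid since $u_m+v_m>1$ and $u_m,v_m<1$), we get $w_m=2-u_m-v_m$, so $m+1\in\Bc\iff 2-u_m-v_m>1-\gamma\iff u_m+v_m<1+\gamma$. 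Hence case $1$ corresponds to $1<u_m+v_m<1+\gamma$ together with the ambient $u_m>\alpha$, $v_m>\beta$ (here again the ``or'' clause is implied: if $u_m\ge1-\alpha$ and $v_m\ge1-\beta$ then $u_m+v_m\ge2-\alpha-\beta>1+\gamma$ since $1-\alpha-\beta=\gamma$ gives $2-\alpha-\beta=1+\gamma$, contradicting strict inequality). Case $2$ corresponds to $u_m+v_m>1+\gamma$ together with the ``or'' clause, and here I expect the main subtlety: I must verify that in case $2$ the ambient conditions $u_m>\alpha$, $v_m>\beta$ need not be listed separately — but they should follow, since $u_m+v_m>1+\gamma=2-\alpha-\beta$ forces, e.g., $u_m>1-\alpha-v_m>1-\alpha-1=-\alpha$... that's vacuous, so I'd instead argue $u_m>1+\gamma-v_m>1+\gamma-1=\gamma>\alpha$ using $v_m<1$ and $\gamma>\max(\alpha,\beta)$ from \eqref{eq:thm3-condition}; similarly $v_m>\gamma>\beta$. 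So the stated form of case $2$ is correct and complete. Assembling these three equivalences finishes the proof; the one place demanding care is confirming each listed condition-set is not only necessary but sufficient, i.e., that no ``hidden'' ambient constraint has been silently dropped, which the density inequalities $\alpha,\beta<1/2<\gamma$ and $\alpha+\beta<1$ resolve in each case.
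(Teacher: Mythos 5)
Your proposal is correct and follows essentially the same route as the paper: reduce via Lemma \ref{lem:thm3-perturbation-errors}, translate $m\in\Bct$, $\Ect(m)$, and $m+1\in\Bc$ into inequalities on $u_m,v_m$ via Lemmas \ref{lem:thm3-Bct}, \ref{lem:Ect-formula}, and \ref{lem:beatty-properties}(i) together with $w_m=1-\fp{u_m+v_m}$, and then discard the redundant clauses using $\alpha+\beta<1$ and $2-\alpha-\beta=1+\gamma$. The redundancy checks you flag (dropping the ``or'' clause in cases $0$ and $1$) are exactly the simplifications made in the paper's proof.
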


\begin{proof}
In view of Lemma \ref{lem:thm3-perturbation-errors} it suffices to 
show that the three sets of conditions in this lemma are equivalent to the
corresponding conditions in the proposition. 

By Lemmas \ref{lem:Ect-formula} and \ref{lem:thm3-Bct} we have
\begin{align}
\label{eq:Bct-cond}
m\in\Bct&\Longleftrightarrow 
u_m>\alpha\text{ and } v_m>\beta\text{ and } 
\bigl( u_m<1-\alpha\text{ or } v_m<1-\beta\bigr),
\\
\label{eq:Ect1-cond}
\Ect(m)=1
&\Longleftrightarrow
u_m+v_m>1\text{ and } \bigl(u_m<1-\alpha\text{ or } v_m<1-\beta\bigr),
\\
\label{eq:Ect0-cond}
\Ect(m)=0
&\Longleftrightarrow
u_m+v_m<1\text{ or } \bigl(u_m>1-\alpha\text{ and } v_m>1-\beta\bigr).
\end{align}
Hence the first condition in Lemma \ref{lem:thm3-perturbation-errors},
``$m\in\Bct$ and $\Ect(m)=0$'',
holds if and only if 
the three conditions $u_m>\alpha$, $v_m>\beta$, and $u_m+v_m<1$, hold
simultaneously, i.e., if and only if the first condition in the proposition
is satisfied.  This proves the first of the three asserted equivalences.

For the second equivalence, note that, by Lemma
\ref{lem:beatty-properties}(i),  \eqref{eq:um-vm},  and \eqref{eq:wm},  
\begin{align}
\label{eq:Bc-cond}
m+1\in\Bc
&\Longleftrightarrow w_m>1-\gamma\Longleftrightarrow\fp{u_m+v_m}<\gamma
\\
\notag
&\Longleftrightarrow
u_m+v_m<\gamma\text{ or }1<u_m+v_m<1+\gamma.
\end{align}
Combining this with the conditions
\eqref{eq:Bct-cond} and \eqref{eq:Ect1-cond}
for $m\in\Bct$ and $\Ect(m)=1$, 
we see that the second condition in Lemma 
\ref{lem:thm3-perturbation-errors}, i.e., 
``$m\in\Bct$ and $\Ect(m)=1$ and $m+1\in\Bc$'', 
holds if and only if
\begin{equation}
\label{eq:lem5.6cond2-equivalence}
1<u_m+v_m<1+\gamma \text{ and } 
u_m> \alpha\text{ and }
v_m>\beta\text{ and }
(u_m<1-\alpha\text{ or } v_m<1-\beta).
\end{equation}
Now note that the first condition in \eqref{eq:lem5.6cond2-equivalence}
implies $u_m+v_m< 2-\alpha-\beta=(1-\alpha)+(1-\beta)$, 
so that at least one of  $u_m<1-\alpha$
and $v_m<1-\beta$ must hold.  Thus the last condition 
in \eqref{eq:lem5.6cond2-equivalence} is a consequence of the first condition
and can therefore be dropped.  Hence \eqref{eq:lem5.6cond2-equivalence}, and
therefore also the second condition in Lemma
\ref{lem:thm3-perturbation-errors}, is equivalent to the second condition in
the proposition.

The equivalence between the third conditions in Lemma
\ref{lem:thm3-perturbation-errors} and the proposition can be seen by a
similar argument.
\end{proof}

The conditions of Proposition \ref{prop:thm3-perturbation-errors}
can be described geometrically as
follows:
\begin{equation*}
c(n)-\ct(n)=d\Longleftrightarrow (u_m,v_m)\in R_d\quad (d=0,1,2),
\end{equation*}
where $m=\ct(n)$ and  $R_0$, $R_1$, and $R_2$ are the regions inside the
unit square shown in Figure \ref{fig:thm3} below.

\begin{figure}[H]
\begin{center}
\hspace{.05\textwidth}\includegraphics[width=0.9\textwidth]{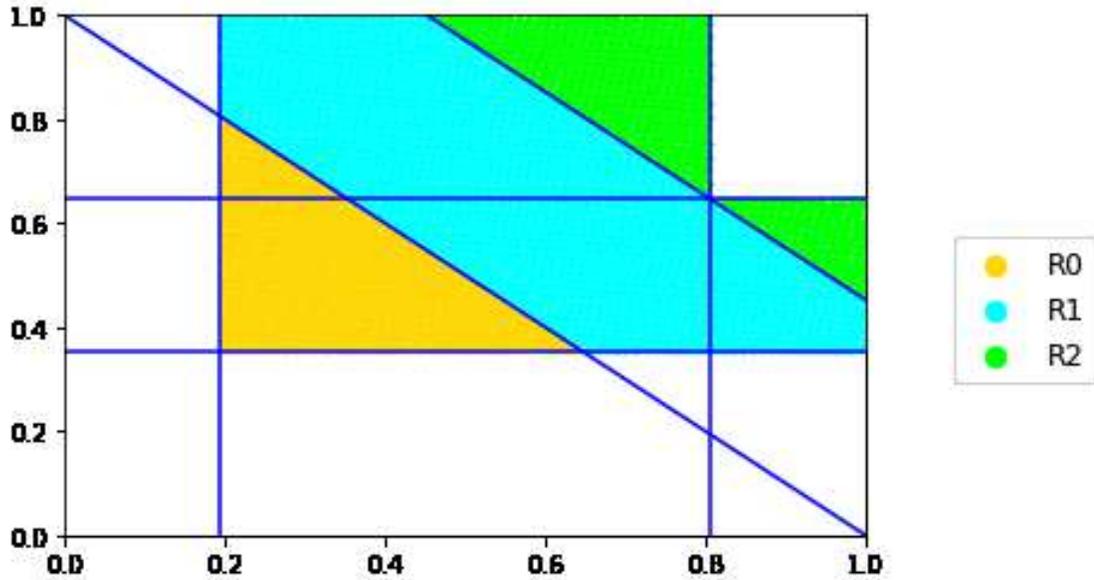}
\end{center}
\caption{The regions $R_0$, $R_1$, and $R_2$ in the $uv$-plane 
representing the conditions given in Proposition 
\ref{prop:thm3-perturbation-errors}
for the perturbation errors $c(n)-\ct(n)$ to be 
equal to $0$, $1$, and $2$, respectively.
The vertical boundary lines 
are given by the equations $u=\alpha$ and $u=1-\alpha$,  
the horizontal boundary lines 
are given by the equations $v=\beta$,  $v=1-\beta$,
and the two diagonal lines are given by $u+v=1$ and $u+v=1+\gamma$.
}
\label{fig:thm3}
\end{figure}

If $\alpha,\beta,1$ are linearly independent over $\QQ$, then
Weyl's Theorem (Lemma \ref{lem:weyl}) implies that the points
$(u_m,v_m)$ are uniformly distributed in the unit square.
By Lemma \ref{lem:thm3-Bct} and Proposition
\ref{prop:thm3-perturbation-errors}
it follows that, under this linear independence
condition, the densities of the perturbation errors 
\begin{equation}
\label{eq:density-thm3}
P(d)=\lim_{N\to\infty}\frac1N\#\{n\le N: c(n)-\ct(n)=d\}
\end{equation}
are given by $P(d)=A(R_d)/A(R_0\cup R_1\cup R_2)$,
where $A(R)$ denotes the area of a region $R$. 
An elementary computation of the areas of the regions $R_d$
depicted in Figure \ref{fig:thm3} then yields 
\begin{align*}
P(0)&=\frac{\gamma}{2},\quad
P(1)=1-\frac{\alpha^2+\beta^2+\gamma^2}{2\gamma},
\quad
P(2)=\frac{\alpha^2+\beta^2}{2\gamma}.
\end{align*}
In particular, under the density conditions 
\eqref{eq:thm3proof-density-condition} of Theorem \ref{thm:thm3} and the above linear
independence assumption, it follows all three densities are nonzero.
Hence, under these conditions the perturbation errors $c(n)-\ct(n)$ take on
each of the values $0,1,2$ on a set of positive density.

\section{Proof of Theorem \protect\ref{thm:thm4}}
\label{sec:proof-thm4}

Let $\alpha$, $\beta$ and $\gamma$ be as in Theorem \ref{thm:thm4}. Thus
$\beta=\alpha$ and
$\alpha$ and $\gamma$ are positive irrational numbers satisfying
\begin{equation}
\label{eq:thm4proof-gamma-condition}
2\alpha+\gamma=1.
\end{equation}
In particular, we must have
\begin{equation}
\label{eq:thm4proof-density-condition}
\alpha<\frac12.
\end{equation}

Let $\Bbt=\seq{\bt(n)}$, where $\bt(n)=b(n)-1$, and
$\Bct=\NN\setminus(\Ba\cup\Bbt)=\seq{\ct(n)}$ 
be the sequences defined in Theorem \ref{thm:thm4}. 
Using \eqref{eq:thm4proof-density-condition}, we see as in the proof of
Theorem \ref{thm:thm3} that the sequences $\Ba$, $\Bbt$, and $\Bct$ form a partition 
of $\NN$. The relations $b(n)-\bt(n)=1$ and $\|\bt-b\|=1$ follow 
directly from the
definition of $\bt(n)$.
Thus, to complete the proof of Theorem \ref{thm:thm4}
it remains to prove that 
\begin{equation}
\label{eq:thm4main}
c(n)-\ct(n)\in\{0,1\}.
\end{equation}
This will follow from the three lemmas below.

We recall the notations  (see \eqref{eq:recall-um-vm} and
\eqref{eq:def-Egamma}) 
\[
\Ect(m)=\Bct(m)-\Bc(m)
\]
and 
\[
u_m=\{(m+1)\alpha\}, \quad v_m=\{(m+1)\beta\}, 
\quad w_m=1-\fp{u_m+v_m}
\]
from the proof of Theorem \ref{thm:thm3}. 
We note that, by our assumption $\alpha=\beta$, we have 
\begin{equation}
\label{eq:um=vm}
u_m=v_m, \quad w_m=1-\fp{2u_m}.
\end{equation}

The first two lemmas are the special cases $\alpha=\beta$ (and thus $u_m=v_m$)
of Lemma \ref{lem:Ect-formula} and Lemma \ref{lem:thm3-cn-1-ctn-cn}, 
respectively\footnote{Theorem \ref{thm:thm3} involved the additional condition
$\gamma>\max(\alpha,\beta)$, which in the case when $\alpha=\beta$  
reduces to $\gamma>\alpha$. However, the proofs of Lemmas  
\ref{lem:Ect-formula} and \ref{lem:thm3-cn-1-ctn-cn}, while requiring the
upper bounds $\alpha<1/2$ and $\beta<1/2$,  did not make use of  
this lower bound for $\gamma$.  Thus the conclusions of these lemmas remain
valid under the present conditions, i.e., $\alpha=\beta<1/2$.}.

\begin{lemma}
\label{lem:thm4-Ect-formula}
We have, for all $n\in\NN$,
\begin{equation*}
\Ect(m)=
\begin{cases}
1, &\text{if $1/2<u_m<1-\alpha$;}
\\
0, &\text{otherwise.}
\end{cases}
\end{equation*}
\end{lemma}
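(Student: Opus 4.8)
The plan is to derive this as the specialization $\alpha=\beta$ (hence $u_m=v_m$) of the characterization of $\Ect$ obtained in Lemma~\ref{lem:Ect-formula}. Recall that Lemma~\ref{lem:Ect-formula} states $\Ect(m)=1$ precisely when $u_m+v_m>1$ and at least one of $u_m<1-\alpha$, $v_m<1-\beta$ holds. As noted in the footnote preceding this lemma, the proof of Lemma~\ref{lem:Ect-formula} used only the upper bounds $\alpha<1/2$ and $\beta<1/2$, both of which are guaranteed here by \eqref{eq:thm4proof-density-condition} (recall $2\alpha+\gamma=1$ with $\gamma>0$ forces $\alpha<1/2$), so the conclusion of that lemma is available verbatim.

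First I would substitute $v_m=u_m$ and $\beta=\alpha$ (using \eqref{eq:um=vm}) into the two conditions defining $\Ect(m)=1$. The condition $u_m+v_m>1$ becomes $2u_m>1$, i.e. $u_m>1/2$. The condition ``$u_m<1-\alpha$ or $v_m<1-\beta$'' collapses to the single condition $u_m<1-\alpha$, since both disjuncts are identical. Thus $\Ect(m)=1$ if and only if $1/2<u_m<1-\alpha$, which is exactly the claimed formula; in all other cases $\Ect(m)=0$.

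One small bookkeeping point worth making explicit: the interval $(1/2,\,1-\alpha)$ is nonempty precisely because $\alpha<1/2$, so the case $\Ect(m)=1$ genuinely can occur; and since $\alpha$ is irrational, $u_m=\{(m+1)\alpha\}$ never equals $1/2$ or $1-\alpha$, so there is no ambiguity at the endpoints. I do not anticipate any real obstacle here — the entire content of the lemma is the algebraic simplification of two inequalities under $u_m=v_m$, and the partition property and norm bound $\|\bt-b\|=1$ have already been disposed of in the paragraph preceding the lemma. The only thing to be careful about is citing the footnote's observation correctly, namely that Lemma~\ref{lem:Ect-formula} did not use the lower bound $\gamma>\max(\alpha,\beta)$ from Theorem~\ref{thm:thm3}, so it applies under the present weaker hypotheses.

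\begin{proof}
By \eqref{eq:thm4proof-gamma-condition} we have $\alpha<1/2$, so the hypotheses $\alpha<1/2$ and $\beta<1/2$ needed in the proof of Lemma~\ref{lem:Ect-formula} are satisfied. Hence, by that lemma, $\Ect(m)=1$ if and only if $u_m+v_m>1$ and at least one of the inequalities $u_m<1-\alpha$, $v_m<1-\beta$ holds. By \eqref{eq:um=vm} we have $u_m=v_m$ and $\beta=\alpha$, so the first condition becomes $2u_m>1$, i.e. $u_m>1/2$, and the second condition collapses to the single inequality $u_m<1-\alpha$. Therefore $\Ect(m)=1$ if and only if $1/2<u_m<1-\alpha$, and $\Ect(m)=0$ otherwise. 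This is the assertion of the lemma.
\end{proof}
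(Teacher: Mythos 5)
Your proof is correct and follows exactly the paper's route: the paper likewise obtains this lemma as the specialization $\alpha=\beta$ (so $u_m=v_m$) of Lemma~\ref{lem:Ect-formula}, relying on the footnote's observation that only $\alpha,\beta<1/2$ were used there. Your explicit substitution $u_m+v_m>1\Rightarrow u_m>1/2$ and the collapse of the disjunction to $u_m<1-\alpha$ is precisely the intended argument, and your tacit identification of the Theorem~\ref{thm:thm4} sequence $\Bbt$ with the Theorem~\ref{thm:thm3} construction (valid since $\Bb=\Ba$ forces $\bt(n)=b(n)-1$ for all $n$) matches the paper's own remark.
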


\begin{lemma}
\label{lem:thm4-cn-1-ctn-cn}
We have, for all $n\in\NN$,
\begin{equation*}
c(n-1)\le \ct(n)\le c(n).
\end{equation*}
\end{lemma}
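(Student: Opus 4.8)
The plan is to mimic the argument used for the analogous Lemma \ref{lem:thm3-cn-1-ctn-cn} in the proof of Theorem \ref{thm:thm3}, using the key facts that $\Bct(\ct(n))=n$ and $\Bct(\ct(n)-1)=n-1$, together with the monotonicity of the counting function $\Bct(m)$ and the formula for $\Ect(m)=\Bct(m)-\Bc(m)$ supplied by Lemma \ref{lem:thm4-Ect-formula} (which gives $\Ect(m)\in\{0,1\}$). The two inequalities $\ct(n)\le c(n)$ and $\ct(n)\ge c(n-1)$ are handled separately, in each case by assuming the opposite strict inequality and deriving a contradiction.

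First I would prove the upper bound $\ct(n)\le c(n)$. Suppose instead $\ct(n)\ge c(n)+1$. Then, using $\Bct(\ct(n)-1)=n-1$, monotonicity, and the relation $\Bct(m)=\Bc(m)+\Ect(m)$ at $m=c(n)$, we get
\[
n-1=\Bct(\ct(n)-1)\ge\Bct(c(n))=\Bc(c(n))+\Ect(c(n))=n+\Ect(c(n))\ge n,
\]
since $\Ect(c(n))\ge 0$. This is a contradiction, so $\ct(n)\le c(n)$.

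Next I would prove the lower bound $\ct(n)\ge c(n-1)$. Suppose instead $\ct(n)\le c(n-1)-1$. Using $\Bct(\ct(n))=n$, monotonicity, the relation $\Bc(c(n-1)-1)=n-2$, and $\Ect(m)\le 1$ at $m=c(n-1)-1$, we obtain
\[
n=\Bct(\ct(n))\le\Bct(c(n-1)-1)=\Bc(c(n-1)-1)+\Ect(c(n-1)-1)=n-2+\Ect(c(n-1)-1)\le n-1,
\]
again a contradiction. Hence $\ct(n)\ge c(n-1)$, completing the proof.

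The argument is essentially a transcription of the proof of Lemma \ref{lem:thm3-cn-1-ctn-cn}; the only point requiring care is that the input about $\Ect(m)$ must come from Lemma \ref{lem:thm4-Ect-formula} (the $\alpha=\beta$ specialization) rather than Lemma \ref{lem:Ect-formula}, but since both give $\Ect(m)\in\{0,1\}$ the computations go through verbatim. There is no real obstacle here — the substance of Theorem \ref{thm:thm4} lies in the third lemma, which will use $2\alpha+\gamma=1$ to rule out $c(n)-\ct(n)=2$ and thereby sharpen the bound of Theorem \ref{thm:thm3}.
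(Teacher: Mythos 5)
Your proposal is correct and is essentially the paper's own argument: the paper simply observes (in a footnote) that the proof of Lemma \ref{lem:thm3-cn-1-ctn-cn} only used $\alpha,\beta<1/2$, not $\gamma>\max(\alpha,\beta)$, so it carries over verbatim to the case $\alpha=\beta$, which is exactly the counting-function contradiction argument you transcribe, with $\Ect(m)\in\{0,1\}$ now supplied by Lemma \ref{lem:thm4-Ect-formula}. No gap.
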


The following lemma is a stronger version of Lemma
\ref{lem:thm3-cn-1-ctn-cn}, valid under the conditions of Theorem
\ref{thm:thm4}.

\begin{lemma}
\label{lem:thm4-ct-c}
We have, for all $n\in\NN$,
\begin{equation*}
c(n)-1\le \ct(n)\le c(n),
\end{equation*}
\end{lemma}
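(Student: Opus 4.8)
The plan is to reuse the counting-function bookkeeping already developed in the proof of Theorem \ref{thm:thm3}, exploiting the extra rigidity that comes from the hypothesis $\beta=\alpha$. The upper bound $\ct(n)\le c(n)$ is nothing but Lemma \ref{lem:thm4-cn-1-ctn-cn}, so only the lower bound $\ct(n)\ge c(n)-1$ needs work. Fix $n$, put $m=\ct(n)$, so that $\Bct(m)=n$, and use the partition $\NN=\Ba\cup\Bbt\cup\Bct$ (established at the start of this section) to write $\Bct(m)=m-\Ba(m)-\Bbt(m)$. Since $\bt(n)=b(n)-1$ for all $n$ and $\beta=\alpha$ gives $\Bb=\Ba$, the counting function of $\Bbt$ is $\Bbt(m)=\#\{k:b(k)\le m+1\}=\Bb(m+1)=\Ba(m+1)$. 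Substituting the formulas $\Ba(m)=\fl{(m+1)\alpha}$ and $\Ba(m+1)=\fl{(m+2)\alpha}$ from Lemma \ref{lem:beatty-properties}(ii) turns $\Bct(m)=n$ into an explicit relation among $m$, $n$, and $u_m=\fp{(m+1)\alpha}$.

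Next I would split on the value of $\Ect(m)=\Bct(m)-\Bc(m)$, which by Lemma \ref{lem:thm4-Ect-formula} lies in $\{0,1\}$. If $\Ect(m)=0$, then $\Bc(m)=\Bct(m)=n$, hence $c(n)\le m$, which together with $m\le c(n)$ forces $m=c(n)$ and we are done. If $\Ect(m)=1$, then Lemma \ref{lem:thm4-Ect-formula} gives $u_m<1-\alpha$, i.e.\ $u_m+\alpha<1$; by Lemma \ref{lem:beatty-properties}(i) this is exactly the condition under which $m+1\notin\Ba$, so $\Ba(m+1)=\Ba(m)$ and the relation above collapses to $\Bct(m)=m-2\Ba(m)=m(1-2\alpha)-2\alpha+2u_m=m\gamma-2\alpha+2u_m$. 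Setting this equal to $n$ and using $0<u_m<1-\alpha$ (together with the identity $4\alpha-2=-2\gamma$) yields $m\gamma=n+2\alpha-2u_m>n-2\gamma$, hence $m>n/\gamma-2$. Since $\gamma$ is irrational, $n/\gamma-2$ is not an integer, so the integer $m$ must satisfy $m\ge\fl{n/\gamma}-1=c(n)-1$, as required.

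I do not anticipate a serious obstacle. The one difficulty one might fear---controlling a possibly large gap in $\Bc$, which in the proof of Lemma \ref{lem:thm3-ct-c} forced the assumption $\gamma>1/3$ and a delicate appeal to the gap criterion of Lemma \ref{lem:beatty-properties}(iv)---is sidestepped here because $\beta=\alpha$ makes $\Bbt$ simply $\Ba$ shifted down by one, so $\Bct(m)$ has a closed form and the whole argument reduces to a single inequality. The only point needing a little attention is the case split on $\Ect(m)$: the closed-form expression for $\Bct(m)$ is valid precisely in the regime $u_m<1-\alpha$ that $\Ect(m)=1$ singles out, so the two cases dovetail and irrationality of $\alpha$ rules out the boundary. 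As in Theorem \ref{thm:thm3}, it would be worth recording afterwards the sharper dichotomy $c(n)-\ct(n)=0\Leftrightarrow\Ect(m)=0$ and $c(n)-\ct(n)=1\Leftrightarrow\Ect(m)=1$, since combined with Lemma \ref{lem:thm4-Ect-formula} and Weyl's theorem this would pin down the densities of the two error values.
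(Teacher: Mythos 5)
Your proof is correct, but it takes a genuinely different route from the paper's. The paper proves the lower bound by contradiction: assuming $\ct(n)\le c(n)-2$ and setting $m=\ct(n)$, it deduces $\Ect(m)\ge 1$ and $\Ect(m+1)\ge 1$, concludes from Lemma \ref{lem:thm4-Ect-formula} that $1/2<u_m<1-2\alpha$, hence $\gamma>1/2$, and then invokes the gap formula and gap criterion of Lemma \ref{lem:beatty-properties}(iii)--(iv), together with Lemma \ref{lem:thm4-cn-1-ctn-cn} (which forces $c(n-1)=\ct(n)=c(n)-2$), to reach the contradiction $4\alpha<2\alpha$ via the quantity $w_m$. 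You instead exploit the rigidity of the construction directly: since $\beta=\alpha$ and $\bt(k)=b(k)-1$, you have $\Bbt(m)=\Bb(m+1)=\Ba(m+1)$, so the partition gives the closed form $\Bct(m)=m-\Ba(m)-\Ba(m+1)$; in the only nontrivial case $\Ect(m)=1$, Lemma \ref{lem:thm4-Ect-formula} gives $u_m<1-\alpha$, hence $m+1\notin\Ba$ by Lemma \ref{lem:beatty-properties}(i), and the identity $n=\Bct(m)=m\gamma-2\alpha+2u_m$ together with $u_m<1-\alpha$ and $4\alpha-2=-2\gamma$ yields $m>n/\gamma-2$, whence $m\ge\fl{n/\gamma}-1=c(n)-1$ by irrationality of $n/\gamma$ (the case $\Ect(m)=0$ gives $m=c(n)$ outright, using the upper bound from Lemma \ref{lem:thm4-cn-1-ctn-cn}). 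I checked the arithmetic and the dovetailing of the two cases; both are sound. What your route buys is a shorter, more self-contained argument---no gap criterion, no separate treatment of $\gamma>1/2$, no contradiction---and, as you note, the exact dichotomy $c(n)-\ct(n)=\Ect(\ct(n))$, which immediately sets up a density computation for the two error values. What it gives up is uniformity with the rest of the paper: the published proof is a sharpening of the proof of Lemma \ref{lem:thm3-ct-c} and reuses that machinery, whereas your closed form for $\Bct(m)$ is special to the equal-density, shift-by-one construction and would not transfer to the general setting of Theorem \ref{thm:thm3}.
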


\begin{proof} 
The upper bound, $\ct(n)\le c(n)$, follows from
Lemma \ref{lem:thm4-cn-1-ctn-cn}.  For the lower bound, 
suppose $\ct(n)\le c(n)-2$. 
Let $m=\ct(n)$.  Then 
\[
\Bc(m)\le\Bc(m+1) \le \Bc(c(n)-1)=n-1,
\]
while 
\[
\Bct(m+1)\ge \Bct(m)=\Bct(\ct(n))=n.
\]
Thus we have 
\[
\Ect(m)=\Bct(m)-\Bc(m)\ge 1\text{ and }
\Ect(m+1)=\Bct(m+1)-\Bc(m+1)\ge1. 
\]
By Lemma \ref{lem:thm4-Ect-formula} 
this implies that 
\begin{equation*}
1/2<u_m<1-\alpha\text{ and }1/2<\fp{u_m+\alpha}<1-\alpha.
\end{equation*}
The latter two relations imply 
\begin{equation}
\label{eq:thm4proof-1}
1/2<u_m<1-2\alpha,
\end{equation}
and hence, by 
\eqref{eq:thm4proof-gamma-condition}, 
$\gamma=1-2\alpha>1/2$. 
Therefore $\fl{1/\gamma}=1$, and 
by Lemma \ref{lem:beatty-properties}(iii)
it follows that 
$c(n)-c(n-1)\in\{1,2\}$.  Our assumption $\ct(n)\le c(n)-2$ and  
Lemma \ref{lem:thm4-cn-1-ctn-cn} then implies 
\[
c(n-1)=\ct(n)=c(n)-2.
\]
By Lemma \ref{lem:beatty-properties}(iv) 
it follows that, with $m=c(n-1)=\ct(n)$, we have
\begin{equation}
\label{eq:thm4proof-2}
w_m<\fp{1/\gamma}\gamma=((1/\gamma)-1)\gamma=1-\gamma=2\alpha.
\end{equation}
On the other hand, using \eqref{eq:um=vm}
and \eqref{eq:thm4proof-1} we have
\begin{equation}
\label{eq:thm4proof-3}
w_m=1-\fp{2u_m} =1-(2u_m-1)=2(1-u_m)>2(2\alpha).
\end{equation}
Comparing \eqref{eq:thm4proof-2}
and \eqref{eq:thm4proof-3} yields $4\alpha<2\alpha$, which is a contradiction.
Hence we must have
$c(n)-\ct(n)\in\{0,1\}$. This completes the proof of Lemma
\ref{lem:thm4-ct-c} and of Theorem \ref{thm:thm4}.
\end{proof}

\section{Proof of Theorem \protect\ref{thm:thm1}}
\label{sec:proof-thm1}

The necessity of the condition \eqref{eq:thm1-condition} follows from
the disjointness criterion of Lemma \ref{lem:disjointness}. 
Thus, it remains to show that if this condition is satisfied, then $
\Bct=\NN\setminus(\Ba\cup\Bb)$ is an almost Beatty sequence satisfying
the bounds \eqref{eq:thm1-errors}, and that the upper bound here is
attained for infinitely many $n$.

Suppose that $\alpha,\beta,\gamma$ are irrational numbers 
satisfying the conditions \eqref{eq:abc} and \eqref{eq:thm1-condition}
of Theorem \ref{thm:thm1}.  Thus, there exist $r,s\in\NN$ such that 
\begin{equation}
\label{eq:thm1-proof-sa+sb}
r\alpha+s\beta=1.
\end{equation}
We distinguish two cases:
(I) $r=1$ or $s=1$, and (II) $r>1$ and $s>1$.

\subsection{Proof of the error bounds \eqref{eq:thm1-errors},
Case I: $r=1$ or $s=1$.} 
By symmetry it suffices to consider the case $r=1$. 
Then \eqref{eq:thm1-proof-sa+sb} reduces to $\alpha+s\beta=1$, and since
$\gamma=1-\alpha-\beta>0$, we necessarily have $s\ge 2$ and 
\begin{equation}
\label{eq:thm1-proof-gamma}
\gamma=(s-1)\beta.
\end{equation}
Moreover, since $\beta=(1-\alpha)/s<1/2$, we have $\fl{(2-\beta)/(1-\beta)}=2$
and therefore
\[
\max\left(
\left\lfloor \frac{2-\alpha}{1-\alpha}\right\rfloor,
\left\lfloor \frac{2-\beta}{1-\beta}\right\rfloor\right)
=\max\left(
\left\lfloor 1+\frac{1}{1-\alpha}\right\rfloor,2\right)
=1+\left\lfloor \frac{1}{1-\alpha}\right\rfloor.
\]
Thus, to prove the desired bounds \eqref{eq:thm1-errors}, it suffices to
show that, for all $n\in\NN$,  
\begin{equation}
\label{eq:thm1-proof-errors-case1}
0\le c(n)-\ct(n)\le 
\left\lfloor \frac{1}{1-\alpha}\right\rfloor+1.
\end{equation}

Let
\begin{equation}
\label{eq:thm1-proof-Bsb}
B_{s\beta}=\seq{\fl{n/(s\beta)}}
\end{equation}
be the Beatty sequence of density $s\beta$. Since $\alpha+s\beta =1$,
by Beatty's theorem the sequences $\Ba$ and $B_{s\beta}$ partition $\NN$.

Now observe that the sequence $\Bb=\seq{\fl{n/\beta}}$  is the
subsequence of $B_{s\beta}$ obtained by restricting the index $n$ 
in \eqref{eq:thm1-proof-Bsb} to integers $n\equiv 0\bmod s$.
Since $\Ba$ and $B_{s\beta}$ partition $\NN$, it follows that 
\begin{align*}
\Bct&=\NN\setminus(\Ba\cup \Bb)=
B_{s\beta}\setminus \Bb
=\{\fl{n/(s\beta)}: n\in\NN,n\not\equiv0\bmod s\}.
\end{align*}
Hence $\ct(m)$, the $m$th term of the sequence $\Bct$, 
is equal to $\fl{n_m/(s\beta)}$, where $n_m$ is the $m$th positive integer  
$n$ satisfying $n\not\equiv 0\bmod s$.   A simple enumeration argument
shows that if we represent $m$ (uniquely) in the form 
\begin{equation*}
m=i(s-1)+j,\quad i\in\{0,1,\dots\},\quad j\in\{1,2\dots,s-1\},
\end{equation*}
then $n_m=is+j$.
With this notation, the $m$th term of $\Bct$ is given by 
\begin{align}
\label{eq:thm1-proof-ct1}
\ct(m)=\ct(i(s-1)+j)&=\Fl{\frac{n_m}{s\beta}}=\Fl{\frac{is+j}{s\beta}}
=\Fl{\frac{i}{\beta}+\frac{j}{s\beta}}.
\end{align}
On the other hand, since, by 
\eqref{eq:thm1-proof-gamma}, $c(n)=\fl{n/\gamma}=\fl{n/((s-1)\beta)}$, we have 
\begin{align}
\label{eq:thm1-proof-c1}
c(m)&=\Fl{\frac{m}{(s-1)\beta}}=\Fl{\frac{i(s-1)+j}{(s-1)\beta}}
=\Fl{\frac{i}{\beta}+\frac{j}{(s-1)\beta}}.
\end{align}
From \eqref{eq:thm1-proof-ct1} and \eqref{eq:thm1-proof-c1}
we obtain, on using Lemma \ref{lem:floor-identities}, 
\begin{align}
\label{eq:thm1-proof-ct-c}
0&\le c(m)-\ct(m)=
\Fl{\frac{i}{\beta}+\frac{j}{(s-1)\beta}}
-\Fl{\frac{i}{\beta}+\frac{j}{s\beta}}
\\
\notag
&=\Fl{\left(\frac{i}{\beta}+\frac{j}{(s-1)\beta}\right)-
\left(\frac{i}{\beta}+\frac{j}{s\beta}\right)} +\delta_{i,j}
\\
\notag
&\le \Fl{\frac{j}{s(s-1)\beta}}+1
\le \Fl{\frac{1}{s\beta}}+1
=\Fl{\frac{1}{1-\alpha}}+1,
\end{align}
where
\begin{align*}
\delta_{i,j}&=
\delta\left(\Fp{\frac{is+j}{s\beta}},\Fp{\frac{j}{s(s-1)\beta}}\right)
\\
&=\begin{cases}
1, &\text{if  
$\Fp{\frac{is+j}{s\beta}}+\Fp{\frac{j}{s(s-1)\beta}}\ge 1$;}
\\
0, &\text{otherwise.}
\end{cases}
\end{align*}
This proves the bounds \eqref{eq:thm1-proof-errors-case1}.

\subsection{Proof of the error bounds \eqref{eq:thm1-errors},
Case II: $r>1$ and $s>1$.} 
In this case, we have $\gamma=1-\alpha-\beta=(r-1)\alpha+(s-1)\beta\ge
\alpha+\beta$, so in particular 
\begin{equation}
\label{eq:thm1-proof-gamma-bound}
\gamma>1/2,\quad \alpha<1/2,\quad \beta<1/2. 
\end{equation}
Hence 
\[
\max\left(
\left\lfloor \frac{2-\alpha}{1-\alpha}\right\rfloor,
\left\lfloor \frac{2-\beta}{1-\beta}\right\rfloor\right)
=2,
\]
so the desired bounds \eqref{eq:thm1-errors} reduce to 
\begin{equation}
\label{eq:thm1-proof-errors-case2}
0\le c(n)-\ct(n)\le 2.
\end{equation}

Now note that, by \eqref{eq:thm1-proof-gamma-bound}, we  
have $\gamma>\max(\alpha,\beta)$, so the conditions of Theorem \ref{thm:thm3}
are satisfied.  Moreover, since $\Ba$ and $\Bb$ are disjoint, the sequence
$\Bbt$ defined in this theorem is identical to $\Bb$, and consequently the
sequences $\Bct$ defined in Theorems \ref{thm:thm1} and \ref{thm:thm3}
must also be equal.  Hence, all results established in the proof of Theorem
\ref{thm:thm3}
can be applied in the current situation.
In particular, Lemma \ref{lem:thm3-ct-c} yields the desired bounds
\eqref{eq:thm1-proof-errors-case2} for the perturbation errors 
$c(n)-\ct(n)$.

\subsection{Optimality of the error bounds \eqref{eq:thm1-errors}}

To complete the proof of Theorem \ref{thm:thm1}, it remains to show that
the upper bounds in \eqref{eq:thm1-errors} are sharp.

Consider first Case I above, i.e., the case when $r=1$.  
In this case \eqref{eq:thm1-errors}  reduces to
\eqref{eq:thm1-proof-errors-case1}, and we need to show  that the upper
bound in the latter inequality is attained infinitely often.

Consider integers $m\in\NN$ satisfying $m\equiv0\bmod(s-1)$.
For such integers we have $j=s-1$ in the representation $m=i(s-1)+j$. 
Thus \eqref{eq:thm1-proof-ct-c} reduces to 
\begin{equation}
\label{eq:thm1-proof-optimality}
c(m)-\ct(m)=\Fl{\frac{1}{s\beta}}
+ \delta_{i,s-1}
=\Fl{\frac{1}{1-\alpha}}
+ \delta_{i,s-1},
\end{equation}
where 
\begin{equation*}
\delta_{i,j}
=\begin{cases}
1, &\text{if  $\Fp{\frac{is+s-1}{s\beta}}\ge 1-\Fp{\frac{1}{s\beta}}$;}
\\
0, &\text{otherwise.}
\end{cases}
\end{equation*}
Note that 
$\fp{(is+(s-1))/(s\beta)}=\fp{i\beta+\theta}$ with $\theta=(s-1)/(s\beta)$.  
Since $\beta$ is irrational, by Weyl's theorem (Lemma \ref{lem:weyl})
the sequence $(\fp{i\beta})_{i\in\NN}$,
and therefore also the shifted sequence $(\fp{i\beta+\theta})_{i\in\NN}$,
is dense in $[0,1]$. Hence $\delta_{i,s-1}=1$ 
holds for infinitely many values of $i$.
By \eqref{eq:thm1-proof-optimality}
it follows that  
$c(m)-\ct(m)= \fl{1/(1-\alpha)}+1$ 
also holds for infinitely many $m$. 
This proves that the upper bound in \eqref{eq:thm1-proof-errors-case1} is
attained for infinitely many $m$.

In Case II the bounds \eqref{eq:thm1-errors} reduce to $0\le c(n)-\ct(n)\le
2$, and we need to show that $c(n)-\ct(n)=2$ holds for infinitely many $n$.
By Proposition \ref{prop:thm3-perturbation-errors} (which, as noted above, 
is applicable under the assumptions of Case II),
$c(n)-\ct(n)=2$ holds for infinitely many $n$ if and only if the conditions
\begin{equation}
\label{eq:thm1-proof-optimality-case2}
\bigl(u_m<1-\alpha\text{ or }v_m<1-\beta\bigr) \text{ and }
u_m+v_m>1+\gamma,
\end{equation}
where $u_m=\fp{(m+1)\alpha}$ and $v_m=\fp{(m+1)\beta}$, hold for infinitely
many $m$.  Thus, it remains to show the latter assertion.

We may assume without loss of generality that $s>r$. 
(Note that the case $s=r$ is impossible since then 
\eqref{eq:abc} and \eqref{eq:thm1-condition} imply $\alpha+\beta=1/r$ and
$\gamma=1-(\alpha+\beta)=1-1/r$, contradicting the irrationality of
$\gamma$.)  Under this assumption we have
\begin{align*}
\beta&<\frac1s(r\alpha+s\beta)=\frac1s,
\\
\gamma&=1-\alpha-\beta <1-\frac1s(r\alpha+s\beta)=1-\frac1s.
\end{align*}
In view of these bounds, a sufficient condition for 
\eqref{eq:thm1-proof-optimality-case2} is
\begin{equation}
\label{eq:thm1-proof-um-vm-bounds}
u_m\in(1-\epsilon_1,1),\quad v_m\in \left(1-\frac1s,
1-\frac1s+\epsilon_2\right),
\end{equation}
provided $\epsilon_1,\epsilon_2>0$ are small enough. 
Indeed, if $\beta<(1/s)-\epsilon_2$
and $\gamma< 1-(1/s)-\epsilon_1$, then 
the upper bound for $v_m$ in 
\eqref{eq:thm1-proof-um-vm-bounds} implies $v_m<1-\beta$, while the lower
bounds for $u_m$ and $v_m$ imply $u_m+v_m>2-(1/s)-\epsilon_1>1+\gamma$.
Thus it remains to show that, given arbitrarily small $\epsilon_1$ and
$\epsilon_2$,  there exist infinitely many $m$ satisfying   
\eqref{eq:thm1-proof-um-vm-bounds}.

By \eqref{eq:thm1-condition} we have $\beta=(1-r\alpha)/s$ and hence
\begin{equation}
\label{eq:thm1-proof-vm-formula}
v_m=\fp{\beta(m+1)}=\Fp{\frac{m+1}{s}-r\frac{\alpha(m+1)}{s}}.
\end{equation}
Setting
\begin{equation*}
\label{eq:thm1-proof-umstar}
u_m^*=\Fp{\frac{\alpha(m+1)}{s}}
\end{equation*}
and letting $j\in\{1,2,\dots,s\}$ be defined by
\begin{equation*}
\label{eq:thm1-proof-m+1}
m+1\equiv j\bmod s,
\end{equation*}
we can write 
\eqref{eq:thm1-proof-vm-formula} as 
\begin{align}
\label{eq:thm1-proof-vm-formula2}
v_m&=
\Fp{\frac{j}{s}-r\frac{\alpha(m+1)}{s}}
=\Fp{\frac{j}{s}-r\Fp{\frac{\alpha(m+1)}{s}}}
\\
\notag
&
=\Fp{\frac{j}{s}-ru_m^*}=1-\Fp{ru_m^*-\frac{j}{s}}.
\end{align}

Let $0<\epsilon<1$ be given, and suppose that $m$ satisfies 
\begin{equation}
\label{eq:thm1-proof-m-condition} 
m+1\equiv r-1\bmod s
\end{equation}
and
\begin{equation}
\label{eq:thm1-proof-umstar-condition} 
u_m^*\in\left(\frac{1-\epsilon}{s},\frac{1}{s}\right).
\end{equation}
Then
\begin{align}
\label{eq:thm1-proof-um-interval} 
u_m&=
\Fp{s\frac{\alpha(m+1)}{s}}=
\Fp{s\Fp{\frac{\alpha(m+1)}{s}}}=
\fp{su_m*}\in (1-\epsilon,1).
\end{align}
Moreover, if $\epsilon<1/r$, then
\eqref{eq:thm1-proof-vm-formula2} with $j=r-1$ implies
\begin{align}
\label{eq:thm1-proof-vm-interval} 
v_m&=
1-\Fp{r u_m^*-\frac{r-1}{s}}=
1-\Fp{\frac1s+r\left(u_m^*-\frac1s\right)}
\in \left(1-\frac1s,1-\frac1s+\frac{r\epsilon}{s}\right).
\end{align}
From \eqref{eq:thm1-proof-um-interval} and 
\eqref{eq:thm1-proof-vm-interval} we see that  
the \eqref{eq:thm1-proof-um-vm-bounds}
holds with $\epsilon_1$ and $\epsilon_2$ given by 
\begin{equation}
\label{eq:thm1-proof-epsilon}
\epsilon_1=\epsilon,\quad \epsilon_2=\frac{r\epsilon}{s}.
\end{equation}
Hence, if $\epsilon$ is chosen small enough, then for any $m$ satisfying the
conditions
\eqref{eq:thm1-proof-m-condition} and 
\eqref{eq:thm1-proof-umstar-condition},
the desired bounds \eqref{eq:thm1-proof-optimality-case2} hold.

It remains to show that there are infinitely many
$m$ satisfying these two conditions.  The first of these conditions, 
\eqref{eq:thm1-proof-m-condition}, restricts $m$ to 
integers of the form $m=is+r-2$, $i=0,1,2,\dots$. For such $m$ we have
\[
u_m^*=\Fp{\frac{\alpha(m+1)}{s}}
=\Fp{\frac{\alpha(is+r-1)}{s}} =\Fp{\alpha i + \frac{(r-1)\alpha}{s}}.
\]
Since $\alpha$ is irrational, by Weyl's theorem (Lemma \ref{lem:weyl}),
the sequence $(\fp{\alpha i})_{i\ge0}$, 
and hence also the sequence $(\fp{\alpha i+((r-1)\alpha/s)})_{i\ge0}
=(u^*_{is+r-2})_{i\ge0}$,
is dense in $[0,1]$. 
Thus there exist infinitely many $m$
for which $u_m^*$ falls into the interval
$((1-\epsilon)/s,1/s)$, i.e., satisfies \eqref{eq:thm1-proof-umstar-condition}.
This proves our claim and completes the proof of 
Theorem \ref{thm:thm1}.

\begin{remark}
\label{rem:error-densities-thm1}
The above argument yields the \emph{maximal} value taken on by the
perturbation errors $c(n)-\ct(n)$.  One can ask, more generally,  
for the \emph{exact} set of values taken on by these errors, and for
the densities with which these values occur.  This is a more difficult
problem that leads to some very delicate questions on the behavior of the pairs
$(u_m,v_m)$.  We plan to address this question in a future paper. We note 
here only that, in general, it is \emph{not} the case that the
perturbation errors $c(n)-\ct(n)$ are uniformly distributed over their range
of possible values.
\end{remark}

\section{Proof of Theorem \ref{thm:thm5}}
\label{sec:proof-thm5}

Let $\alpha,\beta,\gamma$ satisfy the conditions of the theorem; that is, 
assume that $\alpha,\beta,\gamma$ are positive irrational numbers summing
to $1$ and suppose in addition that $\alpha>1/3$ and that the numbers
$1,\alpha,\beta$ are linearly independent over $\QQ$. 

We will show that, under these assumptions, 
there exist infinitely many $m\in\NN$ such that
\begin{equation}
\label{eq:thm5-proof-goal}
m\in \Ba,\quad m+1\in\Bb\cap \Bc, \quad m+2\in \Ba.
\end{equation}
The desired conclusion follows from 
\eqref{eq:thm5-proof-goal}.
Indeed, for any $m$ satisfying 
\eqref{eq:thm5-proof-goal} the element $m+1$ belongs to both $\Bb$ and
$\Bc$. Thus, in order to obtain a partition of the desired type 
(i.e., of the form $\NN=\Ba\cup \Bbt\cup \Bct$),
for one of these latter two
sequences the element $m+1$ would have to ``perturbed'' to avoid the
overlap.  However, since $m$ and $m+2$ are elements of $\Ba$,
a perturbation of $m+1$ by $\pm1$ creates an overlap with an element in
$\Ba$, in contradiction to the partition property.
Thus there does not exist a partition $\NN=\Ba\cup \Bbt\cup \Bct$
in which the perturbation errors for the almost Beatty sequences $\Bbt$ and
$\Bct$ are bounded by $1$ in absolute value.

It remains to show that \eqref{eq:thm5-proof-goal} holds for
infinitely many $m$. We distinguish two cases, $1/3<\alpha<1/2$ 
and $1/2<\alpha<1$.

Suppose first that 
\begin{equation}
\label{eq:thm5-proof-case1}
1/3<\alpha<1/2.
\end{equation}
Without loss of generality, we may assume that 
$\beta\le \gamma$, which, by \eqref{eq:thm5-proof-case1},  implies
\begin{equation}
\label{eq:thm5-proof-beta-bound}
\beta\le \frac12(\beta+\gamma)=\frac12(1-\alpha)<\frac13<\alpha.
\end{equation}

Applying part (i) of Lemma \ref{lem:beatty-properties} we obtain 
\begin{equation}
\label{eq:thm5-proof-case1-1}
m+1\in\Bb\cap \Bc\Longleftrightarrow \fp{(m+1)\beta}>1-\beta\text{ and } 
\fp{(m+1)\gamma}>1-\gamma.
\end{equation}
Using the notation (cf. \eqref{eq:um-vm} and \eqref{eq:wm})
\begin{align*}
u_m&=\{(m+1)\alpha\}, \ v_m=\{(m+1)\beta\}, 
\ w_m=\{(m+1)\gamma\}=1-\fp{u_m+v_m},
\end{align*}
and the relation $\gamma=1-\alpha-\beta$, 
the two conditions on the right of \eqref{eq:thm5-proof-case1-1}
are seen to be equivalent to 
\begin{align}
\label{eq:thm5-proof-Bb-condition}
v_m&>1-\beta,
\\
\label{eq:thm5-proof-Bc-condition}
\fp{u_m+v_m}&<1-\alpha-\beta,
\end{align}
respectively. 

Next, by part (iv) of Lemma \ref{lem:beatty-properties} 
along with our assumption \eqref{eq:thm5-proof-case1}
(which implies that the number $k$ in  Lemma
\ref{lem:beatty-properties}(iv) 
is equal to $\fl{1/\alpha}=2$) we have 
\begin{equation}
\label{eq:thm5-proof-case1-2}
m\in\Ba\text{ and }m+2\in \Ba
\Longleftrightarrow 
\fp{1/\alpha}\alpha< u_m< \alpha.
\end{equation}
Since 
$\fp{1/\alpha}\alpha =((1/\alpha)-2)\alpha=1-2\alpha$,
the condition on the right of \eqref{eq:thm5-proof-case1-2}
is equivalent to 
\begin{equation}
\label{eq:thm5-proof-Ba-condition}
1-2\alpha<u_m<\alpha.
\end{equation}

Thus $m$ satisfies \eqref{eq:thm5-proof-goal} if and only if 
the numbers $u_m$ and $v_m$ satisfy 
\eqref{eq:thm5-proof-Bb-condition},
\eqref{eq:thm5-proof-Bc-condition}, and 
\eqref{eq:thm5-proof-Ba-condition}, and 
it remains to show that the latter three conditions hold for
infinitely many $m$.  
To this end, let $\epsilon>0$ be given and  consider the intervals
\[
I_\epsilon=(\alpha-\epsilon,\alpha),\quad
J_\epsilon=(1-\beta,1-\beta+\epsilon).
\]
If $\epsilon$ is sufficiently small, then, since
$1-2\alpha<\alpha$ by \eqref{eq:thm5-proof-case1}, we have
\begin{equation}
\label{eq:thm5-proof-I-J-conditions1}
I_\epsilon\subset (1-2\alpha,\alpha), 
\quad J_\epsilon\subset (1-\beta,1).
\end{equation}
Moreover, our assumption \eqref{eq:thm5-proof-beta-bound} implies
$\alpha-\epsilon+(1-\beta)>1$ 
provided $\epsilon$ is sufficiently small, 
and hence  
\begin{equation}
\label{eq:thm5-proof-I-J-conditions2}
I_\epsilon+J_\epsilon
\subset(\alpha+1-\beta-\epsilon,\alpha+1-\beta+\epsilon)
\subset (1,2),
\end{equation}
where $I+J$ denotes the sumset of $I$ and $J$, i.e., the set of
all elements $x+y$ with $x\in I$ and $y\in J$.

Assume now that $\epsilon$ is small enough so that
\eqref{eq:thm5-proof-I-J-conditions1} and
\eqref{eq:thm5-proof-I-J-conditions2} both hold.
Let $(u_m,v_m)$ be such that 
\begin{equation}
\label{eq:thm5-proof-um-vm-condition}
u_m\in I_\epsilon\text{ and }v_m\in J_\epsilon.
\end{equation}
Then, by \eqref{eq:thm5-proof-I-J-conditions1},
\eqref{eq:thm5-proof-Ba-condition} and \eqref{eq:thm5-proof-Bb-condition}
are satisfied.
Moreover, in view of \eqref{eq:thm5-proof-I-J-conditions2},
we have 
\begin{equation}
\label{eq:thm5-proof-um+vm-condition}
\fp{u_m+v_m}=u_m+v_m-1<\alpha
+(1-\beta +\epsilon)-1=\alpha-\beta+\epsilon,
\end{equation}
and since $\alpha<1/2$, we have $\alpha+\epsilon<1/2<1-\alpha$ provided 
$\epsilon<(1/2)-\alpha$. 
Hence, for sufficiently small $\epsilon$,  the right-hand side of 
\eqref{eq:thm5-proof-um+vm-condition} is smaller than $1-\alpha-\beta$,
and condition \eqref{eq:thm5-proof-Bc-condition} is satisfied as well.

Thus, assuming $\epsilon$ is sufficiently small, 
any pair $(u_m,v_m)$ for which  
\eqref{eq:thm5-proof-um-vm-condition} holds satisfies the three conditions 
\eqref{eq:thm5-proof-Bb-condition},
\eqref{eq:thm5-proof-Bc-condition}, and 
\eqref{eq:thm5-proof-Ba-condition}.
Since $(u_m,v_m)=(\fp{(m+1)\alpha}, \fp{(m+1)\beta})$ and, by 
assumption, 
the numbers $1,\alpha,\beta$ are linearly independent over $\QQ$, 
the two-dimensional version of Weyl's Theorem (Lemma \ref{lem:weyl}(ii)) 
guarantees that there are infinitely many such pairs.
This completes the proof of the theorem in the case when $1/3<\alpha<1/2$.

Now suppose $1/2<\alpha<1$.
As before, the condition $m+1\in\Bb\cap\Bc$ 
in \eqref{eq:thm5-proof-goal}
is equivalent to the two conditions \eqref{eq:thm5-proof-Bb-condition} and
\eqref{eq:thm5-proof-Bc-condition}.
However, the remaining condition in \eqref{eq:thm5-proof-goal},
``$m\in \Ba$ and $m+2\in\Ba$'', requires a slightly different treatment in the 
case when $\alpha>1/2$.  In this case, 
the number $k$ in  Lemma \ref{lem:beatty-properties}(iv) 
is equal to $k=\fl{1/\alpha}=1$ and we have
$\fp{1/\alpha}\alpha=1-\alpha$. Hence, by Lemma \ref{lem:beatty-properties}(iv) 
a \emph{sufficient} condition for $m\in\Ba$ and $m+2\in\Ba$
to hold is 
\begin{equation}
\label{eq:thm5-proof-Ba-condition2}
u_m<1-\alpha.
\end{equation}
The latter condition is the analog of 
\eqref{eq:thm5-proof-Ba-condition}, which characterizes the integers $m$
satisfying ``$m\in \Ba$ and $m+2\in\Ba$'' in the case when
$1/3<\alpha<1/2$.

The remainder of the argument parallels
that for the case $1/3<\alpha<1/2$.
We seek to show that there exist infinitely many $m$ such
that \eqref{eq:thm5-proof-Bb-condition},
\eqref{eq:thm5-proof-Bc-condition}, and 
\eqref{eq:thm5-proof-Ba-condition2} hold.
Given $\epsilon>0$, set 
\[
I_\epsilon'=(\beta,\beta+\epsilon),\quad 
J_\epsilon=(1-\beta,1-\beta+\epsilon).
\]
If $\epsilon$ is sufficiently small, then, since
$\beta=1-\alpha-\gamma<1-\alpha$, we have 
\begin{equation}
\label{eq:thm5-proof-I-J-conditions1-case2}
I_\epsilon'\subset (0,1-\alpha)
\quad J_\epsilon\subset (1-\beta,1),
\end{equation}
and 
\begin{equation}
\label{eq:thm5-proof-I-J-conditions2-case2}
I_\epsilon'+J_\epsilon\subset (1,2).
\end{equation}
From \eqref{eq:thm5-proof-I-J-conditions1-case2} we immediately see that
\eqref{eq:thm5-proof-Bb-condition} and \eqref{eq:thm5-proof-Ba-condition2}
hold whenever $u_m\in I_\epsilon'$ and $v_m\in J_\epsilon$. 
Moreover, by \eqref{eq:thm5-proof-I-J-conditions2-case2} we have, for
any sufficiently small $\epsilon$,
\begin{equation}
\label{eq:thm5-proof-um+vm-condition-case2}
\fp{u_m+v_m}=u_m+v_m-1<(\beta+\epsilon) 
+(1-\beta +\epsilon)-1=2\epsilon<1-\alpha-\beta,
\end{equation}
so \eqref{eq:thm5-proof-Bc-condition} holds as well whenever
$u_m\in I_\epsilon'$ and $v_m\in J_\epsilon$.
As before, Weyl's Theorem ensures that there are infinitely
many $m$ for which  the latter two conditions hold, and hence infinitely
many $m$ for which \eqref{eq:thm5-proof-um-vm-condition} holds.
This completes the proof of Theorem \ref{thm:thm5}.

\section{Concluding Remarks}
\label{sec:concluding-remarks}

In this section we discuss some related results in the literature, other
approaches to almost Beatty partitions, possible extensions and
generalizations of our results, and directions for future research.

\paragraph{More on iterated Beatty partitions.}
In Theorem \ref{thm:thm2} we constructed almost Beatty partitions by iterating the
standard two-part Beatty partition process.  This is perhaps the most
natural approach to partitions into more than two ``Beatty-like'' 
sequences,  and many special cases of such constructions have appeared in
the literature. Skolem observed in 1957 \cite[p.~68]{skolem1957} that
starting out with the complementary Beatty sequences
$\seq{\fl{n\Phi}}$ and $\seq{\fl{n\Phi^2}}$ and
``partitioning'' the index $n$ in the first sequence into the same pair of
complementary Beatty sequences yields a three part partition consisting of
the sequences $\seq{\fl{\Phi\fl{\Phi n}}}$, $\seq{\fl{\Phi\fl{\Phi^2 n}}}$, and
$\seq{\fl{\Phi^2 n}}$.  Further iterations of this process lead to partitions
into arbitrarily many Beatty-like sequences involving the golden ratio or
other special numbers; see, for example, Fraenkel
\cite{fraenkel1977, fraenkel1994, fraenkel2010}, Kimberling
\cite{kimberling2008}, and Ballot \cite{ballot2017}. 

The ``iterated Beatty partition'' approach leads to sequences whose terms
are given by iterated floor functions. Removing all but the outermost floor
function in such a sequence, one obtains an exact Beatty sequence whose
terms differ from the original sequence by a bounded quantity. Thus, the 
partitions generated by this approach are almost Beatty partitions in the
sense defined of this paper.  However, as our results show, these
partitions do not necessarily yield the smallest perturbation errors.

\paragraph{Partitions into non-homogeneous Beatty sequences.}
Another natural approach to almost Beatty partitions is by considering 
\emph{non-homogeneous} Beatty sequences, that is, sequences of the form
$\seq{\fl{(n+\beta)/\alpha}}$.  Clearly, any sequence of this form differs
from the exact Beatty sequence $\seq{\fl{n/\alpha}}$ by a bounded amount,
and hence is an almost Beatty sequence. Thus, any partition of $\NN$ into
non-homogeneous Beatty sequences is a partition into almost Beatty
sequences.  The question of when a partition of $\NN$ into  $k$
non-homogeneous Beatty sequences exists has received considerable attention
in the literature, but a  complete solution is only known in
the case $k=2$;  see, for example,  Fraenkel \cite{fraenkel1969} and
O'Bryant \cite{obryant2003} for the case $k=2$, and Tijdeman
\cite{tijdeman2000} and the references therein for the general case.
An intriguing question is whether any of the sequences constructed 
in Theorems \ref{thm:thm1}--\ref{thm:thm4} can be represented as a
non-homogeneous Beatty sequence.

Very recently, Allouche and Dekking \cite{allouche-dekking2018}
considered two- and three-part partitions of $\NN$ into sequences of the
form $\seq{p\fl{\alpha n}+qn+r}$, where $p,q,r$ are integers and
$\alpha$ is  an irrational number.  Such sequences can be viewed as
generalized non-homogeneous Beatty sequences, and they represent almost Beatty
sequences in the sense of this paper, associated with the exact Beatty
sequence $\seq{\fl{(p\alpha+q)n}}$.  Again, it would be interesting to
see if any of the sequences constructed in our partitions are of the 
above form. 

\paragraph{Other approximation measures.}
In our results we measured the ``closeness'' of two sequences $A=\seq{a(n)}$
and $B=\seq{b(n)}$  by the sup-norm $\|a-b\|$.  This norm has the natural
interpretation as the maximal amount by which an element in one sequence
needs to be ``perturbed'' in order to obtain the corresponding element in
the other sequence.  An alternative, and seemingly equally natural,
measure of closeness of two sequences $A$ and $B$ is the sup-norm of the
associated counting functions, i.e., 
\[
\|A-B\|=\sup\{|A(n)-B(n)|: n\in\NN\}.
\]
One can ask how our results would be affected if the
approximation errors had been measured in terms of the latter norm.
Surprisingly, this question has a trivial answer, at least in the case of
Theorems \ref{thm:thm1}, \ref{thm:thm3}, and \ref{thm:thm4}:  In all of
these cases, we have $\|\Bb-\Bbt\|\le 1$ and $\|\Bc-\Bct\|\le 1$ whenever
the conditions of the theorems are satisfied. In other words, when measured
by the counting function norm, the almost Beatty sequences constructed in
these theorems are either exact Beatty sequences or 
just one step away from being an exact Beatty sequence.  This follows easily from Lemmas \ref{lem:Ba-Bb-Bc-counting-function}, 
\ref{lem:Ebt-formula}, and \ref{lem:Ect-formula}.
The reason for the simple form of this result  is that the
counting function norm is a much less sensitive measure than the
element-wise norm we have used in this paper.

\paragraph{Partitions into $n$ almost Beatty sequences.}
A natural question is whether the constructions of Theorems
\ref{thm:thm2}--\ref{thm:thm4} can be generalized to yield partitions of
$\NN$ into more than three almost Beatty sequences.  The ``iterated Beatty
partition'' approach in Theorem \ref{thm:thm2} lends itself easily to such a
generalization, though it is not clear what the best-possible perturbation
errors are that can be achieved in the process. 

\paragraph{Densities of perturbation errors.}
Another question concerns the densities with which the perturbation errors
in our results occur.  We have computed these densities in the case of
Theorem \ref{thm:thm3} under an appropriate linear independence assumption
(see the end of Section \ref{sec:proof-thm3}), but in the case of Theorem
\ref{thm:thm1} this would be a much more difficult undertaking 
(see Remark \ref{rem:error-densities-thm1}). One motivation for computing 
these densities is that we can then consider a refined approximation
measure given by the weighted average of the absolute values of the errors,
with the weights being the associated densities.  It would be interesting
to see which approaches yield the closest approximation to a partition into
Beatty sequences in terms of this refined measure of closeness.

\paragraph{Connection with optimal assignment problems.}
Finally we remark on an interesting connection with a class of problems
involving the assignment or scheduling of tasks in a manner that is, in an
appropriate sense,  most efficient or most fair. Problems of this type
arise in many contexts, from distributing tasks among different machines,
to allocating seats in an election, to assigning drivers in a carpool;
see, for example, Coppersmith et al. \cite{coppersmith2011}. 

We describe one such problem, the  {\em chairman assignment problem} of
Tijdeman \cite{tijdeman1980}: Suppose $k$ states with weights
$\lambda_1,\dots,\lambda_k$, where the $\lambda_i$ are positive real
numbers satisfying $\sum_{i=1}^k\lambda_i=1$, form a union.  Each year one
of the states is chosen to designate the chair of the union.  The goal is
to perform this assignment in such way that, at any time, the proportion
of chairs chosen from a given state up to that time is as close as
possible to the weight of that state.  In his paper, Tijdeman determines
an assignment that is optimal in a certain global sense, and he gives a
recursive algorithm to compute the optimal assignment sequence.  

Mathematically, an assignment of chairs amounts to a partition of the
positive integers into $k$ sets.  Partitions into $k$ almost Beatty sequences
with densities given by the weights $\lambda_i$ of the states  provide natural
candidates for a ``fair'' assignment. In particular, any of the partitions
produced by Theorems \ref{thm:thm1}--\ref{thm:thm4} yields a possible chair
assignment sequence for the case of three states with weights $\alpha$, $\beta$,
and $\gamma$. These partitions turn out to be different from those given 
by Tijdeman, and they are not optimal with respect to the measure used 
by Tijdeman (which essentially amounts to minimizing the discrepancies
$\sup_n|A_i(n)-\lambda_in|$, where $A_i$ is the $i$th set in the
partition).  However, as shown in this paper, these partitions are, in
many cases, best-possible in a different sense and thus might be of
interest in applications to assignment problems.  Exploring these
connections and applications further could be a fruitful area for future
research.

\section{Acknowledgments}

This work originated with an undergraduate research project carried out in
Spring 2018 at the \emph{Illinois Geometry Lab} at the University of
Illinois; we thank the IGL for providing this opportunity.  We also thank
Michel Dekking for calling our attention to the paper 
\cite{allouche-dekking2018} and the editor-in-chief for referring us 
to Tijdeman's paper \cite{tijdeman1980} on the chairman assignment problem.

\bigskip
\hrule
\bigskip

\noindent 2010 {\it Mathematics Subject Classification}:
Primary 11B83; Secondary 05A17, 11P81.

\noindent \emph{Keywords: } 
Beatty sequence, complementary sequence, partition of the integers.

\bigskip
\hrule
\bigskip

\noindent (Concerned with sequences
\seqnum{A000201},
\seqnum{A003144},
\seqnum{A003145},
\seqnum{A003146},
\seqnum{A003623},
\seqnum{A004919},
\seqnum{A004976},
\seqnum{A158919},
\seqnum{A277722},
\seqnum{A277723}, 
and
\seqnum{A277728}.)

\bigskip
\hrule
\bigskip

\vspace*{+.1in}
\noindent
Received October 5 2018; 
revised version received  July 3 2019.
Published in {\it Journal of Integer Sequences}, July 7 2019.

\bigskip
\hrule
\bigskip

\noindent
Return to
\htmladdnormallink{Journal of Integer Sequences home
page}{http://www.cs.uwaterloo.ca/journals/JIS/}.
\vskip .1in


\begin{thebibliography}{10}

\bibitem{allouche-dekking2018}
J.~P.~Allouche and F.~M.~Dekking,  Generalized Beatty sequences
and complementary triples, 
preprint, 2018. Available at \url{https://arXiv.org/abs/1809.03424}.

\bibitem{ballot2017}
C.~Ballot, On functions expressible as words on a pair of {B}eatty
  sequences, {\em J.~Integer Sequences} {\bf 20} (2017), 
\href{https://cs.uwaterloo.ca/journals/JIS/VOL20/Ballot/ballot22.html}
  {Article 17.4.2}.

\bibitem{bang1957}
T.~Bang, On the sequence {$[n\alpha],\;n=1,2,\cdots$}. {S}upplementary
  note to the preceding paper by {T}h. {S}kolem, {\em Math. Scand.} {\bf 5}
  (1957), 69--76.

\bibitem{beatty}
S.~Beatty, Problem 3173, {\em Amer. Math. Monthly} {\bf 33} (1926), 159.

\bibitem{putnam1959}
L.~E.~Bush, The {W}illiam {L}owell {P}utnam {M}athematical {C}ompetition, {\em
  Amer. Math. Monthly} {\bf 68} (1961), 18--33.


\bibitem{coppersmith2011}
D.~Coppersmith, T.~J.~Nowicki, G.~A.~Paleologo, C.~Tresser, and C.~W.~Wu,
The optimality of the online greedy algorithm in carpool and chairman
assignment problems, 
{\em ACM Trans. Algorithms} \textbf{7} (2011), Art. 37.


\bibitem{fraenkel1969}
A.~S.~Fraenkel, The bracket function and complementary sets of integers,
  {\em Canad. J.~Math.} {\bf 21} (1969), 6--27.

\bibitem{fraenkel1977}
A.~S.~Fraenkel, Complementary systems of integers, {\em Amer. Math.
  Monthly} {\bf 84} (1977), 114--115.

\bibitem{fraenkel1994}
A.~S.~Fraenkel, Iterated floor function, algebraic numbers, discrete
  chaos, {B}eatty subsequences, semigroups, {\em Trans. Amer. Math. Soc.} {\bf
  341} (1994), 639--664.

\bibitem{fraenkel2010}
A.~S.~Fraenkel, Complementary iterated floor words and the {F}lora game,
  {\em SIAM J.~Discrete Math.} {\bf 24} (2010), 570--588.

\bibitem{ginosar-yona2012}
Y.~Ginosar and I.~Yona, A model for pairs of {B}eatty sequences, {\em
  Amer. Math. Monthly} {\bf 119} (2012), 636--645.

\bibitem{graham1963}
R.~L.~Graham, On a theorem of {U}spensky, {\em Amer. Math. Monthly} {\bf 70}
  (1963), 407--409.

\bibitem{holshouser-reiter}
A.~Holshouser and H.~Reiter, A generalization of {B}eatty's theorem,
  {\em Southwest J.~Pure Appl. Math.}  (2001), 24--29.

\bibitem{kedlaya-putnam}
K.~S.~Kedlaya, B.~Poonen, and R.~Vakil, {\em The {W}illiam {L}owell
  {P}utnam {M}athematical {C}ompetition, 1985--2000},
  Mathematical Association of America, 2002.

\bibitem{kimberling2008}
C.~Kimberling, Complementary equations and {W}ythoff sequences, {\em J.
  Integer Sequences} {\bf 11} (2008),
\href{https://cs.uwaterloo.ca/journals/JIS/VOL11/Kimberling/kimberling719a.html}
  {Article 08.3.3}.



\bibitem{kimberling-stolarsky2016}
C.~Kimberling and K.~B.~Stolarsky, Slow {B}eatty sequences, devious
  convergence, and partitional divergence, {\em Amer. Math. Monthly} {\bf
  123} (2016), 267--273.

\bibitem{putnam1995}
L.~F.~Klosinski, G.~L.~Alexanderson, and L.~C.~Larson, The
  {F}ifty-{S}ixth {W}illiam {L}owell {P}utnam {M}athematical {C}ompetition,
  {\em Amer. Math. Monthly} {\bf 103} (1996), 665--677.

\bibitem{kuipers-niederreiter}
L.~Kuipers and H.~Niederreiter, {\em Uniform Distribution of Sequences},
  Wiley-Interscience,  1974.

\bibitem{niven-book}
I.~Niven, {\em Diophantine Approximations}, Dover Publications, 2008.

\bibitem{obryant2002}
K.~O'Bryant, A generating function technique for {B}eatty sequences and
  other step sequences, {\em J.~Number Theory} {\bf 94} (2002), 299--319.

\bibitem{obryant2003}
K.~O'Bryant, Fraenkel's partition and {B}rown's decomposition, {\em
  Integers} {\bf 3} (2003), A11.

\bibitem{rayleigh-book}
Lord Rayleigh, {\em The Theory of Sound}, Macmillan, 1894.

\bibitem{schoenberg-book}
I.~J.~Schoenberg, {\em Mathematical Time Exposures}, Mathematical
  Association of America, 1982.

\bibitem{skolem1957}
T.~Skolem, On certain distributions of integers in pairs with given
  differences, {\em Math. Scand.} {\bf 5} (1957), 57--68.

\bibitem{oeis}
N.~J.~A.~Sloane, \emph{The On-Line Encyclopedia of Integer Sequences},
2018. Published electronically at \url{https://oeis.org}.

\bibitem{stolarsky1976}
K.~B.~Stolarsky, Beatty sequences, continued fractions, and certain shift
  operators, {\em Canad. Math. Bull.} {\bf 19} (1976), 473--482.


\bibitem{tijdeman1980}
R.~Tijdeman, 
The chairman assignment problem,
{\em Discrete Math.}
{\bf 32} (1980), 323--330.


\bibitem{tijdeman2000}
R.~Tijdeman, 
Exact covers of balanced sequences and {F}raenkel's conjecture,
in {\em Algebraic Number Theory and Diophantine Analysis ({G}raz,
  1998)}, de Gruyter,  2000, pp.~467--483. 

\bibitem{uspensky1927}
J.~V.~Uspensky, On a problem arising out of the theory of a certain game, {\em
  Amer. Math. Monthly} {\bf 34} (1927), 516--521.

\end{thebibliography}
\end{document}